\UseRawInputEncoding
\documentclass[12pt]{article}
\usepackage{bbm}
\usepackage{latexsym}
\usepackage{mathrsfs}
\usepackage{graphicx}
\usepackage{epstopdf}
\usepackage{subfigure}
\usepackage{amssymb}
\usepackage{amsmath}
\usepackage{amsthm}
\usepackage{color}
\usepackage{cite}
\usepackage{float}
\usepackage{indentfirst}
\usepackage{caption}
\usepackage{float}
\usepackage{pdflscape}
\usepackage{geometry}
\geometry{left=2.5cm,right=2.5cm,top=2.5cm,bottom=2.5cm}

\captionsetup{labelsep=period}
\captionsetup[longtable]{labelsep=period}
\usepackage{enumerate}

\newcommand{\FS}[2]{\displaystyle\frac{#1}{#2}}

\newtheoremstyle{lemma}{\topsep}{\topsep}%
     {}
     {}
     {\bfseries}
     {}
     {0.1em}
     {\thmname{#1}\thmnumber{ #2}\thmnote{ #3}}
\theoremstyle{lemma}  

\newtheorem{theorem}{Theorem}[section]    
\newtheorem{lemma}[theorem]{Lemma}
\newtheorem{corollary}[theorem]{Corollary}

\numberwithin{equation}{section}

\usepackage{latexsym, bm}
\setlength{\baselineskip}{17pt}
\title
{\textbf{Forcing and anti-forcing polynomials of a polyomino graph}
\thanks{Supported by the Natural Science Foundation of Ningxia, China (Grant No. 2019AAC03124), and the Science and Technology Research Foundation of the Higher Education Institutions of Ningxia, China (Grant No. NGY2018-139).}}
\author{{Kai Deng$^{a,}$\thanks{
Corresponding author.}
, Huazhong L\"{u}$^{b}$, Tingzeng Wu$^{c}$}\\
{\footnotesize $^{a}$ School of Mathematics and Information Science,
North Minzu University,}
\\{\footnotesize Yinchuan, Ningxia 750021, P. R.~China}
\\{\footnotesize E-mail: dengkai04@126.com}
\\{\footnotesize $^{b}$ School of Mathematical Sciences,
University of Electronic Science and Technology of China,}
\\{\footnotesize Chengdu, Sichuan 610054, P.R. ~China}
\\{\footnotesize E-mail:  lvhz08@lzu.edu.cn}
\\{\footnotesize $^{c}$ School of Mathematics and Statistics,
Qinghai Nationalities University,}
\\{\footnotesize Xining, Qinghai 810007, P.R.~China
}
\\{\footnotesize E-mail: mathtzwu@163.com}
}
\date{}

\begin{document}
\maketitle
\begin{abstract}
The forcing number of a perfect matching $M$
in a graph $G$ is the smallest number of edges inside $M$
that can not be contained in other perfect matchings.
The anti-forcing number of $M$ is the smallest number
of edges outside $M$ whose removal results in a
subgraph with a single perfect matching,
that is $M$.
Recently, in order to investigate the distributions of forcing numbers and anti-forcing numbers,
the forcing polynomial and anti-forcing polynomial were proposed, respectively.
In this work, the forcing and anti-forcing polynomials
of a polyomino graph are obtained.
As consequences, the forcing and anti-forcing spectra of this polyomino graph are determined,
and the asymptotic behaviors on the degree of freedom
and the sum of all anti-forcing numbers are revealed, respectively.

\vskip 0.1 in

\noindent \textbf{Key words:} Perfect matching;
Forcing polynomial; Anti-forcing polynomial;
Polyomino graph
\end{abstract}

\section{Introduction}

A {\em perfect matching} in a graph is a set of independent edges
that saturates each vertex.
The perfect matching arose in many fields,
such as a dimer covering in statistical physics \cite{Kasteleyn},
a Kekul\'{e} structure in quantum chemistry \cite{Cyvin},
and a solution for optimal assignment \cite{LP}, etc.
In 1985--1987 Klein and Randi\'{c} \cite{Klein, Randic}
observed that a few number of fixed double bonds of a Kekul\'{e} structure
can determine the whole Kekul\'{e} structure,
and defined the least number of fixed double bonds as
the \emph{innate degree of freedom} of this Kekul\'{e} structure.
This concept was extended to a perfect matching by Harary et al. \cite{Harary}
in 1991,
and renamed as \emph{forcing number}.
From the opposite point of view,
Vuki\v{c}evi\'{c} and Trinajsti\'{e} \cite{VT1, VT2}
proposed the anti-forcing number of a graph,
that was extended to a single perfect matching in 2015 \cite{Zh2}.
Generally, computing the forcing and anti-forcing numbers of a perfect matching
are both NP-complete\cite{Adams, ZhangD2}.
Recently, the forcing polynomial \cite{ZhangZhao} and anti-forcing polynomial \cite{Hwang}
were proposed to investigate the distributions of forcing and anti-forcing
numbers of all perfect matchings, respectively.
In this paper,
we will calculate the forcing polynomial
and anti-forcing polynomial of a polyomino graph, respectively.

Suppose graph $G$ has a perfect matching $M$.
A subset $S \subseteq M$ is called a {\em forcing set} of $M$
if other perfect matchings in $G$ do not contain $S$.
That is, $M$ is determined by an edge set inside of $M$.
Thus the forcing number of $M$ is the size of a minimum forcing set,
denoted by $f(G,M)$.
The \emph{forcing spectrum} \cite{Adams, Afshani} of $G$ is
the collection of forcing numbers of all perfect matchings,
denoted by Spec$_f(G)$.
The minimum and maximum values in Spec$_f(G)$ are called
the \emph{minimum} and \emph{maximum forcing numbers} of $G$,
denoted by $f(G)$ and $F(G)$, respectively.
Finding the minimum forcing number was proved to be a NP-complete problem \cite{Afshani},
but the special structure of graph helps
to compute it.
The minimum forcing numbers of some special graphs were obtained,
such as $2n\times2n$ grids \cite{Pachter},
hypercubes \cite{Matthew, Diwan},
$2m\times2n$ toric grids \cite{Matthew},
toric hexagonal systems \cite{Wang}, etc.
On the other hand,
the maximum forcing number was considered,
such as cylindrical grids \cite{Afshani, Jiang1},
$2m\times 2n$ toric grids \cite{Kleinerman},
4-8 lattices \cite{Jiang1}, etc.
For a hexagonal system,
Xu et al. \cite{ZhangF} confirmed that the maximum forcing number is equal to the Clar number,
that is true for polyomino graphs \cite{Zhou2} and (4,6)-fullerenes \cite{Shi1} as well.
The forcing spectra of some graphs were discussed,
such as hypercubes \cite{Adams}, grid graphs \cite{Afshani},
tubular boron-nitrogen fullerene graphs \cite{Jiang}, hexagonal systems \cite{ZhangD}, etc.
Especially, Randi\'{c}, Vuki\v{c}evi\'{c} and Gutman \cite{c70, c72, c60}
calculated the forcing spectra of fullerenes
C$_{60}$, C$_{70}$ and C$_{72}$,
showing that the Kekul\'{e} structure contributing more to the stability of molecule
often has larger innate degree of freedom.
There are few results on forcing polynomial,
Zhao and Zhang \cite{Zhaos, ZhangZhao, Zhaos2} considered
some special hexagonal systems, and $2\times n$ and $3\times2n$  grids.
For more results, see
\cite{Li, Che, Lam, Che1, Shi, Zhou1, Zhou, Ye}.

Let $E(G)$ denote the edge set of a graph $G$,
and let $M$ be a perfect matching of $G$.
For a subset $S^{\prime}\subseteq E(G)\setminus M$,
if $G-S^{\prime}$ (the graph obtained by removing all the edges of $S^\prime$ from $G$)
has a single perfect matching, that is $M$,
then $S^\prime$ is named an \emph{anti-forcing set} of $M$.
In other words, $M$ is fixed by an edge set outside of $M$.
The size of a minimum anti-forcing set
is named  the \emph{anti-forcing number} of $M$, denoted by $af(G, M)$.
The collection of anti-forcing numbers of all perfect matchings is
called the \emph{anti-forcing spectrum} of $G$,
denoted by Spec$_{af}(G)$.
The minimum and maximum values in Spec$_{af}(G)$
are called \emph{minimum} and \emph{maximum anti-forcing numbers} of $G$,
denoted by $af(G)$ and $Af(G)$, respectively.
Actually, as early in 1997 Li \cite{Li2} had
described the extremal hexagonal systems of minimal anti-forcing number 1.
The minimum anti-forcing numbers of some chemical graphs were discussed,
such as hexagonal systems \cite{Deng1, Deng2, VT1, VT2},
catacondensed phenylene \cite{ZhangQQ}, fullerene graphs \cite{Yang}, etc.
Recently, Lei et al. \cite{Zh2} proved that
the maximum anti-forcing number is equal to Fries number for a hexagonal system,
that is true for (4,6)-fullerenes \cite{Shi1} as well.
Furthermore, Deng and Zhang \cite{ZhangD1} showed that
the cyclomatic number is an attainable upper bound on the maximum anti-forcing number,
and characterized the extremal graphs.
Afterwards, Shi and Zhang \cite{Shi2} gave another achievable upper bound,
and got the maximum anti-forcing numbers of hypercube
and its two variants.
The anti-forcing spectra of some hexagonal systems
were confirmed to be continuous, such as monotonic constructable hexagonal systems \cite{ZhangD2}, catacondensed hexagonal systems \cite{ZhangD3}, etc.
For anti-forcing polynomial,
a few number of graphs are considered, such as extremal hexagonal systems with forcing number 1,
and $2\times n$ and $3\times2n$ grids.

In section 2, as preparations,
some basic results on forcing and anti-forcing
numbers are introduced,
and some useful properties of the polyomino graph $G_n$ (see Fig. \ref{SQ1}) are discussed.
In section 3, we get the
forcing polynomial of $G_n$,
as corollaries, Spec$_{f}(G_n)=[n,2n]$,
and the asymptotic behavior of degree of freedom
of $G_n$ is revealed.
In section 4, we obtain the anti-forcing polynomial of $G_n$,
as consequences, Spec$_{af}(G_n)=[n,3n]$,
and the asymptotic behavior of the sum
of all anti-forcing numbers is showed.

\section{Preliminaries}

\begin{figure}[http]
\centering
\subfigure[$G_n$]{
    \label{SQ1}
 \includegraphics[height=40mm]{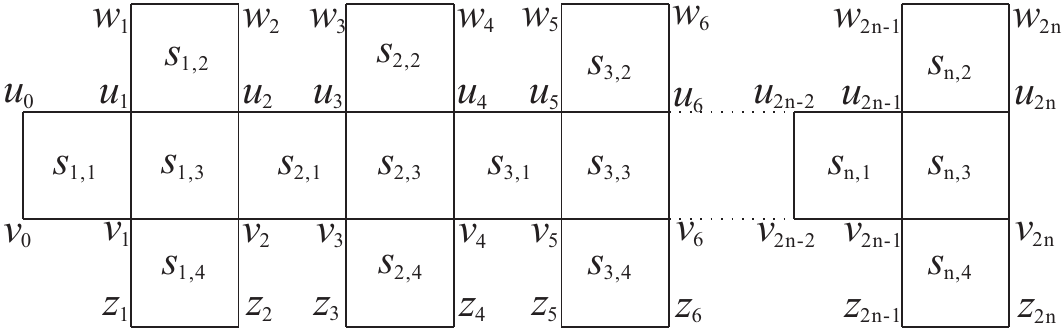}}
\hspace{0mm}
\subfigure[$H_n=G_n\ominus\{v_0,u_0\}$]{
    \label{SQ2}
~~~~~~~~~\includegraphics[height=40mm]{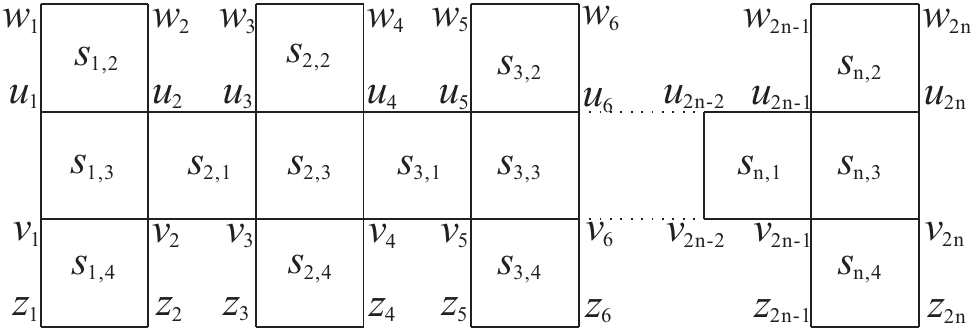}}
 \caption{The polyomino graphs $G_n$ and $H_n$.}
\label{SQ}
\end{figure}

A {\em polyomino graph} is a 2-edge-connected subgraph of the infinite plane grid
such that the periphery of every interior face is a square.
The polyomino graph is an important plane bipartite graphs,
which is studied in many combinatorial problems
\cite{Berge, Cockayne, Motoyama, Kasteleyn, Zhang0}.
The polyomino graph $G_n$ as shown in Fig. \ref{SQ1}
consisting of $4n$ squares,
that is a subgraph of $4\times(2n+1)$ grids,
and the vertices of $G_n$ are labeled by
$u_0, v_0, u_i, v_i, w_i, z_i$, $i=1,2,\cdots,2n$.
The polyomino graph $H_n$ is obtained by removing
the leftmost square $s_{1,1}$ from $G_n$ (see Fig. \ref{SQ2}),
that is a subgraph of $4\times 2n$ grids.

The number of perfect matchings of a graph $G$ is denoted by $\Phi(G)$.
Let $G_0$ denote the null graph.
Then $\Phi(G_0)=1$.
For a perfect matching $M$,
a cycle $C$ is called an {\em $M$-alternating cycle}
if the edges of $C$ appear alternately in $M$ and $E(C)\setminus M$.
Let $W$ be a set of vertices,
and let $G\ominus W$ denote the subgraph generated by
deleting all the vertices of $W$ and their incident edges from $G$.

\begin{lemma}\label{pm}{\bf .}
For $n\geq2$,
\begin{equation}\label{pms}
\Phi(G_n)=6\Phi(G_{n-1})-4\Phi(G_{n-2}),
\end{equation}
where $\Phi(G_{0})=1$ and $\Phi(G_{1})=6$.
\end{lemma}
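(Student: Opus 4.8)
The plan is to set up and solve a small system of coupled recursions obtained by peeling off the last block of $G_n$. Since $G_n$ is a polyomino built up $4$ squares at a time (so that $G_n$ has $4n$ squares inside a $4\times(2n+1)$ grid), I would first isolate the four rightmost squares of $G_n$ together with their boundary vertices among $u_{2n-1},v_{2n-1},w_{2n-1},z_{2n-1},u_{2n},v_{2n},w_{2n},z_{2n}$, and record which of these have degree $2$ in $G_n$. For a perfect matching $M$, an edge at a degree-$2$ corner is forced into $M$ once its only alternative is ruled out, so running this forced-edge cascade inward from the free corner of the last block shows that $M$, restricted to those boundary vertices, realizes only a short list of mutually exclusive local patterns. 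For each pattern, deleting the $M$-saturated vertices of the last block leaves a smaller polyomino graph that is a copy of $G_{n-1}$, of $G_{n-2}$, or of an auxiliary graph such as $H_{n-1}$ or $G_{n-1}$ with two further vertices removed (using the left-right structure of the strip and the fact that deleting the two free vertices of a boundary square either removes that square cleanly or creates a pendant edge that is itself forced). Summing over the patterns yields a first identity expressing $\Phi(G_n)$ as a non-negative integer combination of $\Phi(G_{n-1})$, $\Phi(G_{n-2})$ and the auxiliary matching numbers.

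Next I would close the system with one or two companion identities for the auxiliary graphs. The most convenient input comes from the left end of $G_n$: the square $s_{1,1}$ carries the two degree-$2$ vertices $u_0,v_0$, so either $u_0v_0\in M$, which leaves a perfect matching of $H_n=G_n\ominus\{v_0,u_0\}$, or $u_0v_0\notin M$, forcing $u_0u_1,v_0v_1\in M$ and leaving a perfect matching of $H_n\ominus\{u_1,v_1\}$; hence $\Phi(G_n)=\Phi(H_n)+\Phi(H_n\ominus\{u_1,v_1\})$. Peeling the last block of $H_n$ in the same way (its left boundary is now a plain $4$-path rather than a dangling square) then expresses $\Phi(H_n)$ in terms of $\Phi(G_{n-1})$ and $\Phi(H_{n-1})$. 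Eliminating the auxiliary quantities between these relations is precisely the step that produces the coefficient $-4$, and it reduces the coupled system to the single two-term recurrence $\Phi(G_n)=6\Phi(G_{n-1})-4\Phi(G_{n-2})$ for $n\geq2$. The initial data are then checked by direct enumeration: $\Phi(G_0)=1$ and $\Phi(G_1)=6$, after which one verifies that the derived identities are consistent at $n=2$.

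The step I expect to be the main obstacle is the bookkeeping in these peeling arguments. Several of the forced-edge cascades terminate in residual graphs that are only \emph{isomorphic}, not literally equal, to $G_{n-1}$, $G_{n-2}$ or $H_{n-1}$, so one must check with care that deleting a pendant vertex, or the two free vertices of a boundary square, always returns a member of the family $\{G_m,H_m\}$ and never produces a genuinely new graph type; equivalently, that the state of a perfect matching at a cross-section of the strip is captured by finitely many (here essentially two) parameters. Once the list of local patterns and the identification of their residual graphs are pinned down, the remaining work — the linear elimination and the verification of the base cases — is routine.
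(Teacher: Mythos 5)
Your proposal follows essentially the same route as the paper: split on whether $u_0v_0\in M$ to get $\Phi(G_n)=\Phi(H_n)+\Phi(G_{n-1})$ (your $\Phi(H_n\ominus\{u_1,v_1\})$ equals $\Phi(G_{n-1})$ once the pendant edges $w_1w_2,z_1z_2$ are forced), then split $H_n$ on whether $u_1v_1\in M$ to get $\Phi(H_n)=\Phi(G_{n-1})+4\Phi(H_{n-1})$, and eliminate the $H$-terms to obtain $\Phi(G_n)=6\Phi(G_{n-1})-4\Phi(G_{n-2})$. The only point you leave implicit --- that when $u_1v_1\notin M$ there are exactly four ways ($2\times 2$, from the top and bottom squares $s_{1,2}$ and $s_{1,4}$) to complete the matching on the first column of $H_n$, which is where the coefficient $4$ actually comes from --- is precisely the count the paper carries out, so your plan is correct and matches the paper's proof.
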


\begin{proof}
Let $M^\prime$ be a perfect matching of $H_n$.
If $u_1v_1\in M^\prime$,
then $w_1w_2, z_1z_2\in M^\prime$.
Thus the restriction of $M^\prime$ on $G_{n-1}=H_n\ominus\{u_1,v_1,w_1,w_2,z_1,z_2\}$
is a perfect matching of $G_{n-1}$.
If $u_1v_1\not\in M^\prime$,
then $s_{1,2}$ and $s_{1,4}$ both
are $M^\prime$-alternating squares,
and the restriction of $M^\prime$ on $H_{n-1}=H_{n}\ominus\{w_1,w_2,u_1,u_2,v_1,v_2,z_1,z_2\}$
is a perfect matching of $H_{n-1}$ (see Fig. \ref{SQ2}).
Note that there are four different cases such that $s_{1,2}$ and $s_{1,4}$ both
are $M^\prime$-alternating. Therefore
\begin{equation}\label{r1}
\Phi(H_n)=\Phi(G_{n-1})+4\Phi(H_{n-1}).
\end{equation}

Let $M$ be a perfect matching of $G_n$.
If $u_0v_0\in M$,
then the restriction of $M$ on $H_{n}=G_n\ominus\{u_0,v_0\}$
is a perfect matching of $H_{n}$ (see Fig. \ref{SQ}).
If $u_0v_0\not\in M$,
then $u_0u_1,v_0v_1,w_1w_2,z_1z_2\in M$,
and the restriction of $M$ on $G_{n-1}= G_n\ominus\{u_0,u_1,v_0,v_1,w_1,w_2,$
$z_1,z_2\}$
is a perfect matching of $G_{n-1}$.
So $\Phi(G_n)=\Phi(H_n)+\Phi(G_{n-1})$, we have
$\Phi(H_n)=\Phi(G_n)-\Phi(G_{n-1})$ and $\Phi(H_{n-1})=\Phi(G_{n-1})-\Phi(G_{n-2})$,
substituting them into Eq. (\ref{r1}), then Eq. (\ref{pms}) is obtained.
\end{proof}

\begin{theorem}\label{PM}{\bf .}
\begin{equation}\label{PMF}
\Phi(G_n)=\FS{5-3\sqrt{5}}{10}(3-\sqrt{5})^n+\FS{5+3\sqrt{5}}{10}(3+\sqrt{5})^n.
\end{equation}
\end{theorem}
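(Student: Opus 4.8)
The plan is to solve the linear recurrence \eqref{pms} explicitly by the standard characteristic-root method. First I would form the characteristic equation associated with $\Phi(G_n)=6\Phi(G_{n-1})-4\Phi(G_{n-2})$, namely $x^2-6x+4=0$, whose roots are $x=3\pm\sqrt5$. Since these two roots are distinct, the general solution has the form $\Phi(G_n)=A(3-\sqrt5)^n+B(3+\sqrt5)^n$ for constants $A,B$ to be determined from the initial data.

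Next I would pin down $A$ and $B$ using the two initial values supplied in Lemma \ref{pm}, $\Phi(G_0)=1$ and $\Phi(G_1)=6$. This gives the $2\times2$ linear system
\begin{equation}\label{eq:system}
A+B=1,\qquad A(3-\sqrt5)+B(3+\sqrt5)=6.
\end{equation}
Subtracting $3$ times the first equation from the second yields $\sqrt5\,(B-A)=3$, so $B-A=3/\sqrt5=3\sqrt5/5$; combined with $A+B=1$ this gives $A=\tfrac12\bigl(1-\tfrac{3\sqrt5}{5}\bigr)=\tfrac{5-3\sqrt5}{10}$ and $B=\tfrac12\bigl(1+\tfrac{3\sqrt5}{5}\bigr)=\tfrac{5+3\sqrt5}{10}$, which are exactly the coefficients appearing in \eqref{PMF}.

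Finally I would verify that the closed form is correct by checking it satisfies both the recurrence and the initial conditions (the former is automatic from the choice of characteristic roots, the latter from \eqref{eq:system}), and by a quick induction argument one confirms \eqref{PMF} holds for all $n\geq0$. There is no real obstacle here: the only mild subtlety is the bookkeeping with the surds when inverting the system, and one should double-check the base case $\Phi(G_1)=6$ against the formula to make sure the algebra has not slipped a sign. An alternative, equally short route is to simply take \eqref{PMF} as an ansatz and prove it by strong induction using \eqref{pms}, which sidesteps the derivation entirely and reduces the proof to a one-line computation.
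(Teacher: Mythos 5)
Your proposal is correct and follows essentially the same route as the paper: solve the characteristic equation $x^2-6x+4=0$ of recurrence (\ref{pms}), take the general solution $A(3-\sqrt5)^n+B(3+\sqrt5)^n$, and fit the constants. The only (harmless) difference is that you determine $A,B$ from $\Phi(G_0)=1,\Phi(G_1)=6$, whereas the paper fits at $\Phi(G_2)=32,\Phi(G_3)=168$ and then checks $n=0,1$ separately; your choice is slightly cleaner since the recurrence holds for all $n\geq2$ and agreement at $n=0,1$ already forces agreement for every $n$.
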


\begin{proof}
According to Lemma \ref{pm},
the characteristics equation of recurrence formula (\ref{pms})
is $x^2-6x+4=0$,
with roots $3\pm \sqrt{5}$.
Suppose the general solution of formula (\ref{pms}) is
$\Phi(G_n)=\lambda_0(3-\sqrt{5})^n+\lambda_1(3+\sqrt{5})^n$,
by the initial conditions $\Phi(G_2)=32$ and $\Phi(G_3)=168$,
we can obtain $\lambda_0=\frac{5-3\sqrt{5}}{10}$ and $\lambda_1=\frac{5+3\sqrt{5}}{10}$,
so Eq. (\ref{PMF}) holds for $n\geq2$.
We can check that the equation also holds for $n=0,1$,
so the proof is completed.
\end{proof}

Let $M$ be a perfect matching in graph $G$,
and $c(M)$ denote the largest number of disjoint $M$-alternating cycles.
Pachter and Kim \cite{Pachter} showed the following result by means of
the minimax theorem on feedback set \cite{Younger}.

\begin{theorem}\label{cycle}{\rm\cite{Pachter}}{\bf .}
Let $M$ be a perfect matching of a plane bipartite graph $G$. Then $f(G,M)=c(M)$.
\end{theorem}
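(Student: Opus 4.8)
The plan is to recast both $f(G,M)$ and $c(M)$ as the two sides of a minimax identity for directed graphs, and then invoke the cited feedback-set theorem. First I would record the basic correspondence between forcing sets and alternating cycles: a subset $S\subseteq M$ is a forcing set of $M$ if and only if $S$ contains an edge of every $M$-alternating cycle of $G$. Indeed, if an $M$-alternating cycle $C$ is disjoint from $S$, then $M\triangle C$ is a perfect matching other than $M$ that still contains $S$; conversely, if $M'\neq M$ is a perfect matching with $S\subseteq M'$, then $M\triangle M'$ is a nonempty disjoint union of $M$-alternating cycles, none of which meets $S$. Since every vertex of $G$ lies on exactly one edge of $M$, any two $M$-alternating cycles through a common vertex share the $M$-edge at that vertex; hence for $M$-alternating cycles ``vertex-disjoint'' and ``edge-disjoint'' mean the same thing, so $c(M)$ is also the maximum number of edge-disjoint $M$-alternating cycles. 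With these reformulations the weak inequality $f(G,M)\ge c(M)$ is immediate: from $c(M)$ pairwise disjoint $M$-alternating cycles choose one $M$-edge in each; these $M$-edges are distinct, and every forcing set must contain one of them.

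For the reverse inequality I would orient $G$ into a plane digraph $D$. Let $(X,Y)$ be the bipartition of $G$; direct each edge of $M$ from its $Y$-end to its $X$-end, and each edge outside $M$ from its $X$-end to its $Y$-end. Inspecting the in- and out-arcs at a vertex of $X$ and at a vertex of $Y$ shows that the only arc entering an $X$-vertex is its matching arc, and the only arc leaving a $Y$-vertex is its matching arc; consequently a cycle of $G$ is $M$-alternating exactly when it becomes a directed cycle of $D$ (both inclusions follow by walking around the cycle and using these local constraints). Under this translation a forcing set of $M$ is precisely a set of $M$-arcs of $D$ meeting every directed cycle, and $c(M)$ is exactly the maximum number of arc-disjoint directed cycles of $D$.

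Next I would show the restriction ``$M$-arcs only'' is harmless. If $A$ is a feedback arc set of $D$ containing a non-matching arc $a$ with tail $x\in X$, then the unique arc of $D$ entering $x$ is the matching arc at $x$, so every directed cycle through $a$ also uses that matching arc; replacing $a$ by it yields a feedback arc set of size at most $|A|$. Iterating over all non-matching arcs, $D$ has a minimum feedback arc set contained in $M$, whence $f(G,M)$ equals the minimum size of a feedback arc set of $D$. Since $D$ is planar, the minimax theorem on feedback arc sets in planar digraphs \cite{Younger} says this minimum equals the maximum number of arc-disjoint directed cycles of $D$, which we already identified with $c(M)$. Chaining the three equalities gives $f(G,M)=c(M)$.

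The main obstacle is not depth but care in the translation: one must set up the orientation so that $M$-alternating cycles correspond exactly to directed cycles of $D$ (verifying both directions), and one must check that an optimal feedback arc set can be taken inside $M$ — without this last point the planar minimax theorem would only recover $f(G,M)\ge c(M)$, which we already had from weak duality. Once these two points are in place, the statement is a direct application of the cited minimax theorem.
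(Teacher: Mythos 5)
Your proposal is correct and takes essentially the same route as the source the paper relies on: the paper states this theorem without proof, citing \cite{Pachter} and noting it was obtained ``by means of the minimax theorem on feedback set'' \cite{Younger}, which is precisely your orientation-plus-planar-minimax argument. The details you supply (forcing sets as transversals of $M$-alternating cycles, alternating cycles becoming directed cycles under the bipartite orientation, and pushing a minimum feedback arc set into $M$) are all sound, so this is a faithful reconstruction of the cited proof rather than a new approach.
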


An \emph{$M$-resonant set} with respect to a perfect matching $M$ of a polyomino graph $G$
is a collection of independent $M$-alternating squares.
Let $s(M)$ denote the size of a maximal $M$-resonant set.
Then the maximum $s(M)$ over all perfect matchings is called the \emph{Clar number} of $G$, denoted by $cl(G)$.

\begin{theorem}\label{maxforcing}\cite{Zhou2}{\bf .}
Suppose $G$ is a polyomino graph that has a perfect matching. Then $F(G)=cl(G)$.
\end{theorem}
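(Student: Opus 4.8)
The plan is to prove the two inequalities $cl(G)\le F(G)$ and $F(G)\le cl(G)$ separately. The first is essentially immediate: pick a perfect matching $M^{*}$ carrying an $M^{*}$-resonant set of size $cl(G)$; this set is a family of $cl(G)$ pairwise vertex-disjoint $M^{*}$-alternating squares, hence of $cl(G)$ pairwise disjoint $M^{*}$-alternating cycles, so $c(M^{*})\ge cl(G)$, and Theorem \ref{cycle} gives $F(G)\ge f(G,M^{*})=c(M^{*})\ge cl(G)$. For the reverse inequality, Theorem \ref{cycle} reduces the task to showing $c(M)\le cl(G)$ for every perfect matching $M$; concretely, I want to prove that if $G$ has $k$ pairwise vertex-disjoint $M$-alternating cycles $C_{1},\dots,C_{k}$, then some perfect matching $M'$ of $G$ has $k$ pairwise vertex-disjoint $M'$-alternating squares, whence $cl(G)\ge s(M')\ge k$.

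I would prove this last statement by induction, the measure being the total enclosed area $N=\sum_{i=1}^{k}r(C_{i})$, where $r(C)$ counts the squares of $G$ inside the closed disc bounded by $C$. If $N=k$ then each $C_{i}$ is a single square and $M'=M$ works. Otherwise some $C_{i}$ encloses at least two squares; choose one whose enclosed disc is inclusion-minimal among such cycles, and let $G_{i}$ be the polyomino subgraph induced by that closed disc. Since $C_{i}$ is $M$-alternating, no edge of $M$ crosses it, so $M_{i}:=M\cap E(G_{i})$ is a perfect matching of $G_{i}$; moreover every $C_{j}$ lying strictly inside $C_{i}$ is a single square. The engine of the induction is a \emph{Key Lemma}: every polyomino graph with a perfect matching possesses an alternating square, and in fact one can be found avoiding any prescribed pairwise-disjoint collection of interior squares. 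Applying it inside $G_{i}$ to $M_{i}$, with the squares strictly nested in $C_{i}$ as the forbidden collection, yields an $M_{i}$-alternating square $s\subseteq G_{i}$ disjoint from all of them. Because two vertex-disjoint cycles in the plane have discs that are either disjoint or nested, $s$ is then disjoint from every $C_{j}$ with $j\ne i$ as well, so replacing $C_{i}$ by $s$ produces a new family of $k$ pairwise disjoint $M$-alternating cycles with strictly smaller $N$; the induction hypothesis finishes the proof, with the matching produced on the smaller region re-glued unchanged elsewhere so that the new squares stay alternating.

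It remains to prove the Key Lemma, which I would do by a corner-peeling induction on the number of squares of the polyomino graph $G'$. One always finds a convex corner square $s$ of $G'$, say with its top and right edges on the outer boundary and its top-right vertex $v$ of degree two; a short case distinction --- on which edge at $v$ belongs to the perfect matching, and on how the matching behaves at the two neighbouring corners of $s$ --- shows that either $s$ is already alternating, or a strictly smaller polyomino subgraph (obtained by stripping a boundary row or a small reducible configuration near the corner, chosen so as to keep clear of the prescribed interior squares) inherits a perfect matching, to which induction applies. The main obstacle I expect is not any single step but the simultaneous bookkeeping: keeping the extracted square disjoint from the other cycles of the family while also guaranteeing it is alternating for the (possibly modified) matching, and in particular carrying the ``avoid prescribed squares'' clause through the corner induction when several alternating cycles are nested. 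The reason the polyomino hypothesis cannot be dropped --- the analogous statement fails for general plane bipartite graphs --- is exactly that all interior faces are squares, which is what lets any alternating cycle be compressed down to a face without ever leaving the graph.
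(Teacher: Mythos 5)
First, note that the paper contains no proof of this statement at all: it is quoted from Zhou and Zhang \cite{Zhou2}, whose argument goes through a structural result on maximum resonant sets (the subgraph obtained by deleting a maximum resonant set has a unique perfect matching), so there is nothing in the paper to compare your route against; your proposal must stand on its own. Your first inequality $cl(G)\le F(G)$ is fine. The second half, however, has a genuine gap: both the Key Lemma and the inductive step that invokes it keep the perfect matching $M$ fixed, and if that worked it would prove $s(M)\ge c(M)$ for \emph{every} perfect matching of \emph{every} polyomino graph, which is false. Concretely, take $G$ to be the $3\times 3$ block of unit squares (the $4\times 4$ grid graph) and let $M$ consist of six edges taken alternately around the boundary $12$-cycle together with two parallel edges of the central square. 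Then the boundary cycle $C$ and the central square are two disjoint $M$-alternating cycles, so $c(M)=f(G,M)\ge 2$ by Theorem \ref{cycle}; but each of the eight non-central squares contains exactly one $M$-edge, so the central square is the \emph{only} $M$-alternating square and $s(M)=1$. In your induction the family is $\{C,\ \text{central square}\}$, the inclusion-minimal non-square cycle is $C$, and the step demands an $M$-alternating square in the disc of $C$ avoiding the prescribed nested square --- no such square exists, so the argument cannot even begin, and the same example refutes the ``avoid any prescribed pairwise-disjoint collection of interior squares'' clause of the Key Lemma.

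The failure is also not repairable by allowing the matching to be changed only inside the disc while the nested square is kept: in this graph every other unit square shares a vertex with the central one, so any two disjoint alternating squares (for any matching) must discard the central square altogether. Hence a correct proof must be free both to change the perfect matching globally and to abandon the nested members of the family, which is precisely what makes the theorem non-trivial and why the published proof proceeds via maximum resonant sets rather than by compressing cycles one at a time. The paper's Lemma \ref{mrset} does prove $f(G_n,M)=s(M)$, but only for the special graph $G_n$, using that every vertex of $G_n$ lies on the periphery so that no alternating cycle has another cycle nested inside it; your proposal is essentially an attempt to extend that lemma to all polyomino graphs, and the $3\times 3$ block shows it does not extend.
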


\begin{lemma}\label{resonance}\cite{Zhang}{\bf .}
Let $C$ be an $M$-alternating cycle with respect to
a perfect matching $M$ in a planar bipartite graph $G$.
Then there exists a face in the interior of $C$ whose periphery is an $M$-alternating cycle.
\end{lemma}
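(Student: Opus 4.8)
The plan is to argue by extremality. Among all $M$-alternating cycles of $G$ that lie in the closed region bounded by $C$ --- a nonempty family, since $C$ itself belongs to it --- choose one, call it $C'$, whose interior contains as few faces of $G$ as possible. Since the interior of $C'$ is contained in the interior of $C$, it suffices to show that $C'$ is the periphery of a single face of $G$. Suppose not; then the open interior of $C'$ contains an edge of $G$ that does not lie on $C'$.

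Two structural observations drive the argument. First, since $C'$ is $M$-alternating it has even length and each of its vertices is matched by $M$ to one of its two neighbours on $C'$. Second, every vertex of $G$ lying strictly inside $C'$ is matched by $M$ to another vertex strictly inside $C'$: its $M$-partner cannot lie on $C'$ (that vertex is already matched along $C'$), nor outside the region bounded by $C'$ (the edge would cross $C'$, violating planarity). Hence $M$ restricts to a perfect matching of the subgraph $\hat G$ of $G$ induced by the closed interior of $C'$.

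Now distinguish two cases for an edge $e$ of $G$ in the interior of $C'$ but not on $C'$. If $e=xy$ is a chord of $C'$, then $C'$ splits into internally disjoint paths $P_1$ and $P_2$ from $x$ to $y$; since $G$ is bipartite and $xy$ is an edge, $x$ and $y$ lie in different colour classes, so both $P_i$ have odd length, and together with the $M$-alternation of $C'$ this forces the two $M$-edges of $C'$ at $x$ and at $y$ to lie on the same path, say $P_1$, hence $e\notin M$ and $P_1+e$ is an $M$-alternating cycle enclosing strictly fewer faces than $C'$ --- contradicting the choice of $C'$. Otherwise every such $e$ has an endpoint strictly inside $C'$, so by the second observation there is an $M$-edge $vv'$ with both ends strictly inside $C'$; I would trace a maximal $M$-alternating path $P$ through $vv'$ (extending in both directions without repeating a vertex). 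At an endpoint $t$ of $P$ whose last edge lies in $M$, maximality together with the bipartite colouring forces every non-matching neighbour of $t$ to occur on $P$ at odd distance from $t$; taking the nearest one yields an $M$-alternating cycle built from edges of $P$ and one further edge. If $P$ avoids $C'$, this cycle lies strictly inside $C'$, contradicting minimality; if $P$ meets $C'$, then the maximal sub-path of $P$ running between two vertices of $C'$ through the interior has both end-edges outside $M$, and closing it through a suitable arc of $C'$ exactly as in the chord case again produces a smaller $M$-alternating cycle.

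The base configuration --- when the interior of $C$ already contains a single face, so $C$ is its periphery --- is immediate, and the parity bookkeeping in the chord case is routine. The delicate point, which I expect to be the main obstacle, is the interior-vertex case: one must be sure the auxiliary $M$-alternating path can always be closed into a cycle that genuinely lies in the interior of $C'$ and encloses strictly fewer faces, which needs extra care when $\hat G$ is not $2$-connected (for instance when it carries pendant subtrees attached inside $C'$); pruning such appendages first, or localising to the block of $\hat G$ that contains $C'$, is where the real work of the proof lies.
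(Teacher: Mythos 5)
The paper offers no proof of this lemma at all --- it is quoted from \cite{Zhang} --- so there is nothing in-paper to compare your route against; your attempt has to stand on its own. Its extremal setup and the chord case do stand: picking an $M$-alternating cycle $C'$ in the closed interior of $C$ enclosing the fewest faces of $G$, observing that vertices of $C'$ are matched along $C'$ and strictly interior vertices are matched among themselves, and the parity argument showing that a chord $xy$ closes with one of the two arcs of $C'$ into an $M$-alternating cycle enclosing strictly fewer faces, are all correct.

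The interior-vertex case, however, is a genuine gap, and you have in effect conceded it. Concretely: (i) a maximal $M$-alternating path $P$ through an interior matching edge need not have an endpoint whose last edge is in $M$ (both ends may terminate in non-matching edges; then the two endpoints are matched to each other and $P$ closes directly into a cycle --- a case you never treat); (ii) once $P$ reaches $C'$ it may continue along $C'$ or even leave the closed interior, so the cycle you close off at an endpoint need not lie inside $C'$ and can even be $C'$ itself: take $C'$ a hexagon $u_1\cdots u_6$ with $u_1u_2,u_3u_4,u_5u_6\in M$ and an interior edge $ab\in M$ joined by non-matching edges $au_2$ and $bu_5$; one endpoint of the natural maximal path returns exactly $C'$, and the interior portion of $P$ meets $C'$ at only one vertex, so the ``maximal sub-path of $P$ between two vertices of $C'$ through the interior'' that your fallback surgery relies on does not exist, even though the desired smaller cycle is available at the other endpoint; (iii) most seriously, the pendant-subtree obstruction you flag at the end is not a technicality: the statement as quoted is false for an arbitrary plane bipartite graph --- attach inside an $M$-alternating hexagon a pendant path $u_2\,a\,b$ with $ab\in M$; then the unique face interior to the hexagon has bridges on its boundary and its periphery is not a cycle at all. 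Hence any complete proof must actually invoke the standing hypotheses available in the cited source (elementary, hence $2$-connected, plane bipartite graphs) or in the present application (the $2$-connected polyomino $G_n$), and ``pruning appendages or localising to a block'' is not a routine fix, since pruning changes the faces being counted. The part you deferred as ``where the real work lies'' is precisely the proof of the lemma, so the proposal does not establish it.
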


Suppose $M$ is a perfect matching of a polyomino graph,
and $\mathcal{A}$ is a collection of $M$-alternating cycles.
Let $I(\mathcal{A})=\sum_{C\in\mathcal{A}}I(C)$,
where $I(C)$ is the number of squares in the interior of cycle $C$.

\begin{lemma}\label{mrset}{\bf .}
Let $M$ be a perfect matching of $G_n$.
Then $f(G_n,M)=s(M)$.
\end{lemma}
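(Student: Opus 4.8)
The plan is to obtain the equality by proving the two inequalities $s(M)\le f(G_n,M)$ and $f(G_n,M)\le s(M)$ separately, passing through the quantity $c(M)$ of Theorem \ref{cycle} (note that $G_n$ is plane bipartite, so $f(G_n,M)=c(M)$). The first inequality is immediate: a maximum $M$-resonant set is a family of pairwise vertex-disjoint $M$-alternating squares, hence in particular a family of pairwise disjoint $M$-alternating cycles, so $s(M)\le c(M)=f(G_n,M)$. The substance is the reverse inequality $c(M)\le s(M)$, i.e.\ the claim that a maximum family of disjoint $M$-alternating cycles can be chosen to consist of squares only; once this is known one reads off $f(G_n,M)=c(M)\le s(M)\le f(G_n,M)$.

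For $c(M)\le s(M)$ I would use a minimality (exchange) argument governed by the statistic $I$. Among all families $\mathcal{A}$ of pairwise disjoint $M$-alternating cycles with $|\mathcal{A}|=c(M)$, pick one minimizing $I(\mathcal{A})=\sum_{C\in\mathcal{A}}I(C)$. A simple cycle of $G_n$ always encloses at least one square, and encloses exactly one square precisely when it is itself a square; so if every member of $\mathcal{A}$ had $I(C)=1$ we would be finished, since then $\mathcal{A}$ is an $M$-resonant set and $s(M)\ge|\mathcal{A}|=c(M)$. Suppose instead some member is not a square, and choose such a $C\in\mathcal{A}$ that is \emph{innermost}, meaning the interior of $C$ contains no other non-square member of $\mathcal{A}$. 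By Lemma \ref{resonance}, the interior of $C$ contains a square $\sigma$ whose periphery is $M$-alternating. If $\sigma$ is vertex-disjoint from every other member of $\mathcal{A}$, then replacing $C$ by $\sigma$ yields another family of $c(M)$ disjoint $M$-alternating cycles with strictly smaller $I$ (trading $I(C)\ge 2$ for $I(\sigma)=0$), contradicting minimality; hence all members of $\mathcal{A}$ are squares.

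The crux is justifying that disjointness clause, i.e.\ producing the replacement square disjoint from everything else in $\mathcal{A}$. Any member of $\mathcal{A}$ disjoint from $C$ lies either wholly outside $C$ (harmless, since $\sigma\subset\mathrm{int}(C)$) or wholly inside $C$, and by the innermost choice the inside ones, say $s_1,\dots,s_k$, are all squares; so I must arrange $\sigma$ to avoid $s_1,\dots,s_k$. The device is restriction: since $G_n$ is plane, no edge joins the interior of $C$ to its exterior, so the restriction of $M$ to the closed region $R$ bounded by $C$ is a perfect matching of the sub\-polyomino $G_n[R]$, and $\{C,s_1,\dots,s_k\}$ are $k+1$ disjoint $M$-alternating cycles there. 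Proving the statement in the stronger form ``$c=s$ for every polyomino graph'' by induction on the number of squares then applies to $G_n[R]$ whenever $C$ is not the outer boundary (as $G_n[R]$ then has strictly fewer squares), furnishing $k+1$ disjoint $M$-alternating squares inside $R$ to swap in, which again decreases $I(\mathcal{A})$. The one genuinely delicate case is $C$ equal to the outer boundary of $G_n$; here I would either invoke the thinness of $G_n$ (a subgraph of a $4\times(2n+1)$ grid has essentially no deeply interior cell, so plausibly every $M$-alternating cycle of $G_n$ is already a square and the whole direction collapses), or, lacking that shortcut, delete the vertices of $s_1,\dots,s_k$ — each $s_i$ being $M$-alternating, its four vertices are matched inside it, so $M$ restricts to a perfect matching of what remains — and apply Lemma \ref{resonance} to the still $M$-alternating cycle $C$, iterating until the alternating face it produces is a true cell of $G_n$ rather than a deleted one. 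I expect this last case to be where most of the care is required.
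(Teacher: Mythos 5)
Your skeleton is the same as the paper's: $f(G_n,M)=c(M)$ by Theorem \ref{cycle}, the easy inequality $s(M)\le c(M)$, and an exchange argument on a maximum family $\mathcal{A}$ of disjoint $M$-alternating cycles chosen with $I(\mathcal{A})$ minimal, with Lemma \ref{resonance} supplying the replacement square (note that under the paper's definition $I(\sigma)=1$ rather than $0$, but the strict decrease still holds since a non-square cycle encloses at least two cells). However, the step you yourself identify as the crux --- that the replacement square can be taken disjoint from the other members of $\mathcal{A}$ --- is not established, and the devices you propose for it do not work. The strengthened statement you want to induct on, ``$c(M)=s(M)$ for every polyomino graph'', is false: in the $4\times4$ grid graph (a $3\times3$ array of cells, which is a polyomino graph) take the perfect matching consisting of the two horizontal edges of the central cell, the four horizontal edges in the top and bottom rows, and the two vertical edges joining rows $2$ and $3$ in the leftmost and rightmost columns; then the central cell is the only $M$-alternating cell, while the outer boundary is an $M$-alternating $12$-cycle disjoint from it, so $c(M)\ge 2>1=s(M)$. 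So any induction in that generality must break down, and it breaks down exactly at your unresolved case ``$C$ is the outer boundary''. Your two fallbacks for that case also fail: it is not true that every $M$-alternating cycle of $G_n$ is a square (the periphery of a straight chain $L_k$, $k\ge 3$, whose cells are matched by vertical edges is $M$-alternating; cf.\ Lemma \ref{minir}), and after deleting $s_1,\dots,s_k$ the alternating face returned by Lemma \ref{resonance} may be a merged face that is not a cell of $G_n$, while the proposed iteration is never shown to terminate in a genuine cell avoiding the $s_i$.

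What is missing is the single structural fact about $G_n$ that the paper invokes in one clause and that makes all of this machinery unnecessary: every vertex of $G_n$ lies on its periphery. Consequently no cycle of $G_n$ has a vertex strictly in its interior, so no member of $\mathcal{A}\setminus\{C\}$ can lie inside $C$ (it would have to be vertex-disjoint from $C$ yet use only vertices of the closed region bounded by $C$, all of which lie on $C$), and the square produced by Lemma \ref{resonance} has all four of its vertices on $C$, hence is automatically disjoint from every other member of $\mathcal{A}$. With this observation your exchange goes through verbatim, the squares $s_1,\dots,s_k$ inside $C$ never occur, and the innermost-cycle choice, the general induction, and the outer-boundary case can all be deleted.
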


\begin{proof}
Let $\mathcal{A}$ be a maximal set of disjoint $M$-alternating cycles
such that $I(\mathcal{A})$ is as small as possible.
Then $\mathcal{A}$ is an $M$-resonant set.
Otherwise, $\mathcal{A}$ must contain a non-square cycle $C$.
By Lemma \ref{resonance}, there is an $M$-alternating square
$s$ in the interior of $C$.
Since each vertex is on the periphery of $G_n$,
$\mathcal{A}^\prime =(\mathcal{A}\setminus\{C\})\cup\{s\}$ is
another maximal set of disjoint $M$-alternating cycles,
however,
$I(\mathcal{A}^\prime)<I(\mathcal{A})$, a contradiction.
Therefore $|\mathcal{A}|\leq s(M)$.
On the other hand,
by Theorem \ref{cycle}, $f(G_n,M)=|\mathcal{A}|\geq s(M)$.
So  $f(G_n,M)=s(M)$.
\end{proof}

Suppose that $M$ is a perfect matching in a graph $G$.
A \emph{compatible $M$-alternating set} $\mathcal{A}^\prime$ is
a collection of $M$-alternating cycles
such that any two cycles of $\mathcal{A}^\prime$
are disjoint, or overlap only on edges in $M$.
Let $c^{\prime}(M)$ denote the size of a maximal compatible $M$-alternating set.

\begin{theorem}\label{minimax2}\cite{Zh2}{\bf .}
Suppose that $M$ is a perfect matching in a planar bipartite graph $G$.
Then $af(G,M)=c^{\prime}(M)$.
\end{theorem}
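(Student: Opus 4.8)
The plan is to recast the problem in terms of feedback arc sets in a planar digraph, in the same spirit that Theorem~\ref{cycle} is derived from a minimax theorem on feedback sets. Since $G$ is bipartite, fix a bipartition $(X,Y)$ and let $D$ be the digraph obtained from $G$ by orienting every edge of $M$ from its endpoint in $X$ to its endpoint in $Y$, and every edge of $E(G)\setminus M$ from $Y$ to $X$. As $G$ is planar, so is $D$. The point of this orientation is that a cycle of $G$ is $M$-alternating if and only if it is a directed cycle of $D$: along any directed walk an arc entering a vertex of $Y$ must lie in $M$ and an arc entering a vertex of $X$ must lie outside $M$, so directed cycles alternate in and out of $M$, and conversely an $M$-alternating cycle is consistently oriented in $D$.

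The lower bound is the easy half. A graph carrying the perfect matching $M$ has a second perfect matching if and only if it contains an $M$-alternating cycle; hence $S^\prime\subseteq E(G)\setminus M$ is an anti-forcing set of $M$ exactly when $G-S^\prime$ has no $M$-alternating cycle, that is, when $S^\prime$ meets $E(C)\setminus M$ for every $M$-alternating cycle $C$. If $\mathcal{A}^\prime$ is a compatible $M$-alternating set, then the edge sets $E(C)\setminus M$ for $C\in\mathcal{A}^\prime$ are pairwise disjoint by the very definition of compatibility, so every anti-forcing set of $M$ uses at least one edge for each member of $\mathcal{A}^\prime$; therefore $af(G,M)\geq|\mathcal{A}^\prime|$, and maximizing over $\mathcal{A}^\prime$ gives $af(G,M)\geq c^\prime(M)$.

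For the reverse inequality I would contract in $D$ every arc lying in $M$; since each vertex of $G$ is covered by exactly one edge of $M$, this yields a planar digraph $D_0$ whose vertices correspond to the edges of $M$ and whose arcs are exactly the edges of $E(G)\setminus M$. Analyzing how a directed cycle of $D_0$ enters and leaves each contracted vertex shows that the directed cycles of $D_0$ correspond bijectively to the $M$-alternating cycles of $G$, the bijection preserving the underlying non-$M$ edges; hence two directed cycles of $D_0$ are arc-disjoint precisely when the corresponding $M$-alternating cycles are compatible. By the characterization in the previous paragraph, a minimum anti-forcing set of $M$ is exactly a minimum feedback arc set of $D_0$, and the minimax theorem for feedback arc sets in planar digraphs \cite{Younger} equates its size with the maximum number of arc-disjoint directed cycles of $D_0$, which is $c^\prime(M)$. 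Thus $af(G,M)\leq c^\prime(M)$, and combining with the lower bound gives equality.

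The main obstacle is the bijection invoked in the last step. One must verify that a directed cycle of the contracted digraph $D_0$ lifts back to a genuine simple $M$-alternating cycle of $G$, not merely a closed $M$-alternating walk, and that arc-disjointness in $D_0$ reproduces the compatibility condition exactly (sharing edges of $M$ allowed, sharing edges outside $M$ forbidden). The key observation is that any directed cycle passing through a contracted vertex coming from an edge $xy\in M$ with $x\in X$, $y\in Y$ must enter at $x$ along a non-$M$ arc and leave at $y$ along a non-$M$ arc; consequently distinct contracted vertices contribute distinct original vertices, so the lift is a simple cycle. Once this is established, the restriction of feedback arc sets to non-$M$ edges that is forced by the definition of an anti-forcing set costs nothing, since $D_0$ has no other arcs.
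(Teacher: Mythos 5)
This theorem is stated in the paper only as a quotation from \cite{Zh2}, with no proof of its own, and your argument --- orienting the bipartite graph so that $M$-alternating cycles become directed cycles, contracting the $M$-arcs, identifying anti-forcing sets with feedback arc sets of the resulting planar digraph, and invoking the Lucchesi--Younger minimax theorem \cite{Younger} --- is essentially the proof given in that source (and runs parallel to Pachter--Kim's proof of Theorem \ref{cycle}). The proof is correct as proposed, including the two delicate points you identify: directed cycles of the contracted digraph lift to simple $M$-alternating cycles because each contracted vertex is entered and left through non-$M$ arcs, and arc-disjointness there matches compatibility exactly since shared $M$-edges disappear under contraction.
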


For a planar bipartite graph,
two cycles $C_1$ and $C_2$
in a compatible $M$-alternating set $\mathcal{A}^\prime$ are called \emph{crossing}
if $C_1$ enters the interior of $C_2$ from the exterior of $C_2$.
If any two cycles of $\mathcal{A}^\prime$ are no crossing,
then $\mathcal{A}^\prime$ is called
\emph{non-crossing}.

\begin{lemma}\label{minimax9}\cite{Zh2,ZhangD1}{\bf .}
Suppose $G$ is a planar bipartite graph that has a perfect matching $M$.
Then there is a non-crossing compatible $M$-alternating set $\mathcal{A}^\prime$
such that $af(G,M)=|\mathcal{A}^\prime|$.
\end{lemma}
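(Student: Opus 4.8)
The plan is to take the compatible $M$-alternating set provided by Theorem~\ref{minimax2} and ``uncross'' it step by step, keeping its cardinality fixed. By Theorem~\ref{minimax2} there is a compatible $M$-alternating set $\mathcal{A}$ with $|\mathcal{A}|=c^{\prime}(M)=af(G,M)$. Among all compatible $M$-alternating sets of size $af(G,M)$ I would fix one, still denoted $\mathcal{A}$, that is extremal for a suitable complexity measure --- the number of crossing pairs of cycles, with ties broken by the total number of faces of $G$ enclosed by the cycles of $\mathcal{A}$. If $\mathcal{A}$ is non-crossing, we are done; so assume some $C_1,C_2\in\mathcal{A}$ cross and aim for a contradiction.

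The next step is a structural observation about how two compatible cycles meet. Since $M$ is a perfect matching, each vertex lies on a unique $M$-edge; hence if $C_1$ and $C_2$ share a vertex they must share that edge, and the compatibility hypothesis (``overlap only on edges in $M$'') then forces $C_1\cap C_2$ to be a set of pairwise vertex-disjoint $M$-edges. In particular the two cycles meet only along $M$-edges, so in the plane the closed curves $C_1$ and $C_2$ run together along these short $M$-edges and are otherwise disjoint. Using the crossing I would perform a planar surgery: cut both $C_1$ and $C_2$ along a suitable pair of the shared edges at which $C_1$ passes from the exterior to the interior of $C_2$, and recombine the resulting arcs so as to produce two new cycles $C_1',C_2'$ with $C_1'\cup C_2'=C_1\cup C_2$ as edge sets, one nested inside the other. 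Because the cut edges lie in $M$ and $C_1,C_2$ are $M$-alternating, the arcs being recombined have their end-edges outside $M$, so the stitching preserves alternation and $C_1',C_2'$ are again $M$-alternating cycles; they satisfy $C_1'\cap C_2'\subseteq M$ (hence are compatible and, being nested, non-crossing); and for every other $C_3\in\mathcal{A}$ one has $C_3\cap C_i'\subseteq (C_1\cup C_2)\cap C_3\subseteq M$, so compatibility with the rest of $\mathcal{A}$ is automatic. Thus $\mathcal{A}':=(\mathcal{A}\setminus\{C_1,C_2\})\cup\{C_1',C_2'\}$ is again a compatible $M$-alternating set of size $af(G,M)$.

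The heart of the proof, and the step I expect to be the main obstacle, is to verify that $\mathcal{A}'$ is strictly less complex than $\mathcal{A}$. We have replaced the crossing pair $(C_1,C_2)$ by the nested pair $(C_1',C_2')$, which removes one crossing; what must be controlled is whether the surgery creates new crossings between $\{C_1',C_2'\}$ and a third cycle $C_3$. Here I would use that the boundaries of $\mathrm{int}(C_1')$ and of $\mathrm{int}(C_2')$ are assembled entirely from arcs of $C_1$ and $C_2$, so that a cycle $C_3$ can cross $C_1'$ or $C_2'$ only if it already crosses $C_1$ or $C_2$; the delicate case to exclude is that $C_3$ crosses exactly one of $C_1,C_2$ but both of $C_1',C_2'$. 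I would rule this out by analysing which arc $C_3$ uses to leave $\mathrm{int}(C_1')$ and choosing the cut edges (and, if necessary, the crossing pair to be treated first) so that $\mathrm{int}(C_1')$ is inclusion-minimal among the regions produced by such surgeries; alternatively, since the surgery leaves the total enclosed-face count unchanged, it suffices to iterate it and note that the number of crossing pairs is a nonnegative integer which must eventually reach $0$. In either case the extremality of $\mathcal{A}$ is contradicted, so the chosen $\mathcal{A}$ is in fact non-crossing, which proves the lemma.
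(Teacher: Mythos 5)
First, a point of reference: the paper does not prove Lemma~\ref{minimax9} at all --- it is quoted from \cite{Zh2,ZhangD1} --- so there is no in-paper argument to compare against, and your proposal must stand on its own. What you sketch is the standard uncrossing strategy used in that literature, and its sound parts are genuinely sound: the reduction to Theorem~\ref{minimax2}, the observation that two compatible $M$-alternating cycles can meet only along pairwise vertex-disjoint $M$-edges (each shared vertex forces the unique $M$-edge at it into both cycles), the fact that splicing arcs at shared $M$-edges preserves $M$-alternation, and the fact that compatibility with third cycles survives because $C_1'\cup C_2'=C_1\cup C_2$ as edge sets.

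The genuine gap is exactly the step you flag and then do not close: the strict decrease of your complexity measure. If some $C_3$ crossed only $C_1$ before the surgery but crosses both $C_1'$ and $C_2'$ afterwards, then the number of crossing pairs is unchanged (you delete the pairs $(C_1,C_2)$ and $(C_3,C_1)$ but create $(C_3,C_1')$ and $(C_3,C_2')$), and since, as you note, the surgery preserves the total enclosed-face count, your lexicographic measure does not drop either; so the extremal choice of $\mathcal{A}$ yields no contradiction and the iteration could a priori loop. The fallback ``the number of crossing pairs is a nonnegative integer which must eventually reach $0$'' is not an argument --- monotonicity is precisely what is in question --- and the alternative (choosing the cut edges so that $\mathrm{int}(C_1')$ is inclusion-minimal among regions produced by such surgeries) is only named, not carried out; this is where the real work of the lemma lies. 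A second, smaller gap: when $C_1$ and $C_2$ also touch (share $M$-edges without crossing) along the arcs you splice, the recombined closed walks can traverse such a shared edge twice and hence need not be simple cycles; you must either choose the two crossing edges so that the spliced arcs are internally disjoint, or decompose the resulting closed walks into cycles and redo the compatibility and cardinality bookkeeping (noting that a surplus of cycles would contradict the maximality in Theorem~\ref{minimax2}). As it stands, the proposal is a correct outline of the known approach, but not yet a proof.
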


\begin{figure}[http]
\centering
\subfigure[$L_k$]{
    \label{Lk}
 \includegraphics[height=10.5mm]{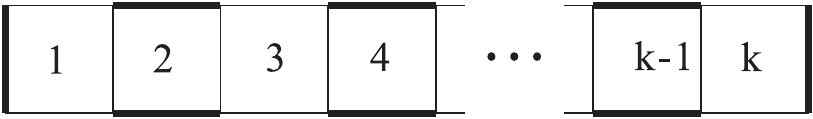}}
\hspace{0mm}
\subfigure[$W_1$, $W_2$ and $W_r$]{
    \label{Hr}
~~~~~~~~~\includegraphics[height=28mm]{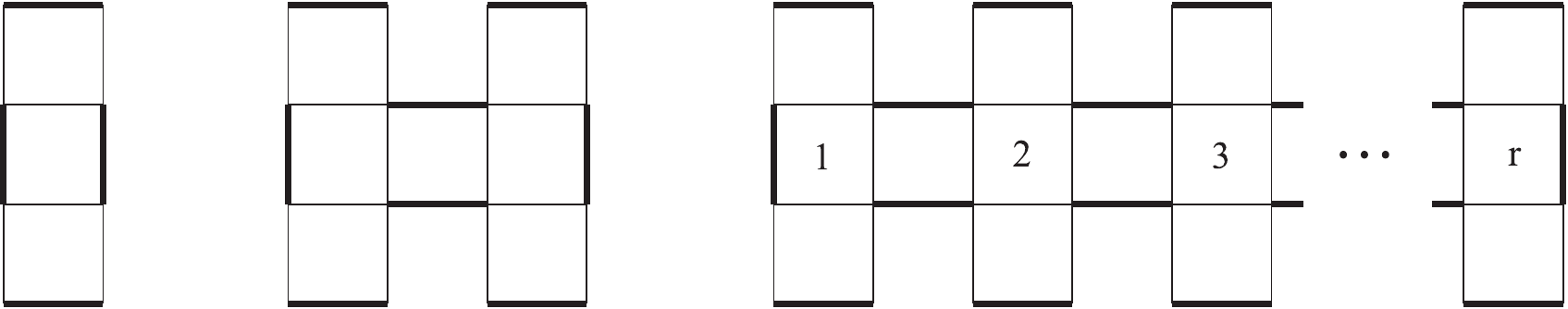}}
 \caption{The illustrations of Lemma \ref{minir}, where bold edges belong to a perfect matching.}
\label{LH}
\end{figure}

The anti-forcing number of a perfect matching $M$ in $G_{n}$
is related with
the number of substructures $L_k$ and $W_r$
as shown in Fig. \ref{LH}.
$L_k$ is a straight chain with $k$ squares,
$1\leq k\leq 2n-1$ and $k$ is odd.
The restriction of $M$ on $L_k$ is a perfect matching of $L_k$,
and the periphery of $L_k$  is
an $M$-alternating cycle which is compatible
with $\frac{k-1}{2}$ $M$-alternating squares in its interior (see Fig. \ref{Lk}).
$W_r$ is isomorphic to $H_r$,
the restriction of $M$ on $W_r$ is a perfect matching of $W_r$,
and $W_r$ contains the substructure $L_{2r-1}$
such that the peripheries of $W_r$ and $L_{2r-1}$
are non-crossing compatible $M$-alternating cycles,
and both of them are compatible with $r-1$ $M$-alternating squares in their interiors
(see Fig. \ref{Hr}).

\begin{lemma}\label{minir}{\bf .}
Let $M$ be a perfect matching in $G_n$, and $\mathcal{A}^\prime$ be
a maximal non-crossing compatible $M$-alternating set
such that $I(\mathcal{A}^\prime)$ is as small as possible.
Then each $M$-alternating square must belong to $\mathcal{A}^\prime$,
and any non-square cycle of $\mathcal{A}^\prime$
either is the periphery of an $L_k$ or the periphery of a $W_r$.
\end{lemma}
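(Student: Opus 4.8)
The plan is to mimic the "small-interior" extremal argument that already worked for Lemma~\ref{mrset}, but now carried out in the richer setting of non-crossing compatible $M$-alternating sets. Let $\mathcal{A}^\prime$ be a maximal non-crossing compatible $M$-alternating set with $I(\mathcal{A}^\prime)$ minimum. The first step is to show every $M$-alternating square lies in $\mathcal{A}^\prime$. If some $M$-alternating square $s$ is missing, I would argue that $\{s\}\cup\mathcal{A}^\prime$ is still non-crossing and compatible: a square has empty interior, so it cannot be entered from outside by any other cycle (no crossings are created), and any cycle meeting $s$ must share an edge of $s$ with it; since $s$ is $M$-alternating one checks that this shared edge can be taken in $M$, preserving compatibility. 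This would contradict maximality unless $s$ already belongs to $\mathcal{A}^\prime$. (If adding $s$ forces the removal of some cycle that properly contains $s$, then that replacement strictly decreases $I(\mathcal{A}^\prime)$, again a contradiction, exactly as in Lemma~\ref{mrset}.)

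**The second step** classifies the non-square cycles. Fix a non-square cycle $C\in\mathcal{A}^\prime$. By Lemma~\ref{resonance} its interior contains an $M$-alternating square, and by the first step all such squares lie in $\mathcal{A}^\prime$; moreover the minimality of $I(\mathcal{A}^\prime)$ forces the interior of $C$ to be tiled precisely by $M$-alternating squares of $\mathcal{A}^\prime$ (otherwise one could shrink $C$ to a strictly smaller $M$-alternating cycle using Lemma~\ref{resonance} and reduce $I$). The remaining work is purely structural: I must show that the only subregions of the $4\times(2n+1)$-type strip $G_n$ whose boundary is an $M$-alternating cycle and whose interior is completely partitioned into $M$-alternating squares are exactly the straight chains $L_k$ ($k$ odd) and the blocks $W_r\cong H_r$ described before the lemma. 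Here one uses the narrowness of $G_n$ (only four rows) together with the alternation constraint along the boundary: a careful case analysis on which rows the cycle $C$ spans, and on the pattern of vertical versus horizontal $M$-edges it meets, should leave only these two shapes.

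**The main obstacle** will be this last case analysis — ruling out every "staircase-like" or L-shaped region that a priori could have an $M$-alternating boundary tiled by alternating squares, and showing the alternation forces the region to be either a one-row-high straight chain of odd length (giving $L_k$), or the full-height $W_r$ configuration with its embedded $L_{2r-1}$. I would organize it by the height of $C$ (1, 2, 3, or 4 rows): height~$1$ immediately gives $L_k$ with odd $k$ from the parity of an $M$-alternating cycle; heights $2$ and $3$ are excluded because the perfect matching $M$ restricted to the interior squares cannot be made to alternate around $C$ while tiling the region — the forced vertical edges on the boundary clash with the forced horizontal edges of the interior squares; and height~$4$ forces the region to be a $W_r$ by tracking the matching column by column and recognizing the recursive structure of $H_r$. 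The compatibility-plus-non-crossing hypotheses are what pin down that the interior squares are all $M$-alternating and mutually compatible, so that no exotic nesting can occur. Once the shape is identified, the stated compatibility with $\frac{k-1}{2}$ (resp.\ $r-1$) interior squares is immediate from the descriptions of $L_k$ and $W_r$ given just before the lemma.
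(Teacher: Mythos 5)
Your step 2 rests on a claim that is false, and it is false precisely for the cycles the lemma says do occur. You assert that minimality of $I(\mathcal{A}^\prime)$ forces the interior of every non-square cycle $C\in\mathcal{A}^\prime$ to be tiled by $M$-alternating squares, and you then set out to classify regions with $M$-alternating boundary whose interior is partitioned into $M$-alternating squares. But the periphery of $L_k$ has only $\frac{k-1}{2}$ $M$-alternating squares among the $k$ squares in its interior, and the periphery of $W_r$ has only $r-1$ among $4r-1$; so the target cycles themselves violate your intermediate claim. The parenthetical ``shrink $C$ via Lemma \ref{resonance} and reduce $I$'' is not available: replacing $P(W_r)$ (or $P(L_k)$) by a smaller alternating cycle in its interior destroys the cardinality $|\mathcal{A}^\prime|=af(G_n,M)=c^\prime(M)$ guaranteed by Lemma \ref{minimax9} -- these long cycles are exactly the members that cannot be traded for squares, which is why the lemma must allow them. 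Your height case analysis is also wrong in its stated reason: cycles of intermediate height \emph{can} be $M$-alternating. For example, in $G_1$ with $M=\{u_0v_0,u_1v_1,u_2v_2,w_1w_2,z_1z_2\}$ the $6$-cycle $u_1w_1w_2u_2v_2v_1u_1$ is $M$-alternating; it is excluded from $\mathcal{A}^\prime$ not because ``alternation fails'' but because the middle square $u_1u_2v_2v_1$ is $M$-alternating, hence lies in $\mathcal{A}^\prime$ by the first part, and shares the non-matching edge $v_1v_2$ with that $6$-cycle. This compatibility mechanism is the heart of the paper's argument, which you do not have: the paper shows that a non-square $C\in\mathcal{A}^\prime$ contains no matching edge $w_iu_i$ or $v_jz_j$ (otherwise the adjacent square $s_{\lceil i/2\rceil,2}$ or $s_{\lceil j/2\rceil,4}$ would be an $M$-alternating square incompatible with $C$), deduces that $M(C)$ contains exactly two vertical edges $u_iv_i,u_jv_j$ with $i+j$ odd, and reads off the shapes $L_{j-i}$ and $W_{\lceil (j-i)/2\rceil}$ from this.

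Your step 1 also misstates the key point: it is not true that any cycle meeting an $M$-alternating square $s$ shares with it only edges of $M$ (a non-square alternating cycle can run along a non-matching edge of $s$, as in the example above), so adding $s$ need not preserve compatibility. The correct move -- and the paper's -- is the swap: since any two $M$-alternating squares are compatible, the obstruction $C$ is a non-square cycle, and replacing $C$ by $s$ keeps a maximum non-crossing compatible set while strictly decreasing $I$, contradicting the choice of $\mathcal{A}^\prime$. You mention this only for cycles that properly contain $s$, which does not cover the general obstruction. As written, the proposal would wrongly eliminate the $W_r$ peripheries and does not exclude the genuine intermediate-height alternating cycles, so the classification part of the lemma is not established.
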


\begin{proof}
Suppose $s$ is an $M$-alternating square that is not in $\mathcal{A}^\prime$.
Then $s$ is not compatible with a cycle $C\in \mathcal{A}^\prime$.
Since any two $M$-alternating squares are compatible with each other,
$C$ is a non-square cycle.
Therefore $\mathcal{A}^{\prime\prime}=(\mathcal{A}^\prime\setminus\{C\})\cup\{s\}$
is another maximal non-crossing compatible $M$-alternating set,
but $I(\mathcal{A}^{\prime\prime})<I(\mathcal{A}^{\prime})$,
a contradiction. So each $M$-alternating square must belong to $\mathcal{A}^\prime$.

Let $C\in\mathcal{A}^\prime$ be a non-square cycle,
$M(C)=M\cap E(C)$.
Note that none of these vertical edges $w_iu_i$ and $v_jz_j$ for $1\leq i,j\leq 2n$
belong to $M(C)$,
otherwise $s_{\lceil\frac{i}{2}\rceil,2}$ or
$s_{\lceil\frac{j}{2}\rceil,4}$ will be an $M$-alternating square
that is not compatible with $C$ (see Fig. \ref{SQ1}).
Since $C$ is $M$-alternating,
$M(C)$ contains just two vertical edges $u_iv_i$ and $u_jv_j(0\leq i<j\leq 2n)$
and $i+j$ is odd.
If $i$ is even, then $C=u_iu_{i+1}\cdots u_jv_jv_{j-1}\cdots v_iu_i$ is the periphery of an $L_{j-i}$.
If $i$ is odd, then $C$ is the periphery of an $L_{j-i}$ or
$C=u_iw_i$
$w_{i+1}u_{i+1}u_{i+2}w_{i+2}w_{i+3}u_{i+3}\cdots
u_{j-1}w_{j-1}w_{j}u_{j}v_{j}z_{j}z_{j-1}v_{j-1}v_{j-2}z_{j-2}z_{j-3}v_{j-3}\cdots
v_{i+1}z_{i+1}z_{i}v_{i}u_{i}$
is the periphery of a $W_{\lceil\frac{j-i}{2}\rceil}$.
\end{proof}

\noindent\textbf{Remark 1.} The same as above, we can show that
Lemmas \ref{mrset} and \ref{minir} also hold for $H_n$.

\section{Forcing polynomials}

 The \emph{forcing polynomial} \cite{ZhangZhao} of a graph
$G$ is defined as follows :

\begin{equation}\label{eq1}
F(G,x)=\sum_{M\in \mathcal{M}(G)} x^{f(G,M)},
\end{equation}
where $\mathcal{M}(G)$ is the collection of all perfect matchings of $G$.

By Eq. $(\ref{eq1})$, another expression
is immediately obtained:

\begin{equation}\label{eq2}
\nonumber F(G,x)=\sum_{i=f(G)}^{F(G)} w(G,i)x^{i},
\end{equation}
where $w(G, i)$ is the number of perfect matchings with the forcing number $i$.

The forcing polynomial can count the number of perfect matchings with the same forcing number,
in other words, the distribution of all forcing numbers is revealed.
Moreover, $\Phi(G)=F(G,1)$,
and the \emph{degree of freedom} of $G$, $IDF(G)=\frac{d}{dx}F(G,x)|_{x=1}$,
i.e. the sum of forcing numbers of all perfect matchings,
which can estimate the resonance energy of a molecule \cite{Randic}.
Obviously, if $G$ is a null graph or a graph has a unique perfect matching,
then $F(G,x)=1$.

The following theorem give a recurrence relation for forcing polynomial of $G_n$.

\begin{theorem}\label{main1}{\bf .}
For $n\geq3$,
\begin{equation}\label{f1}
F(G_n,x)=(4x^2+3x)F(G_{n-1},x)-(8x^3+2x^2)F(G_{n-2},x)+4x^3F(G_{n-3},x),
\end{equation}
where $F(G_0,x)=1,F(G_1,x)=4x^2+2x,F(G_2,x)=16x^4+12x^3+4x^2$.
\end{theorem}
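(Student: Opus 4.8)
\medskip
\noindent\textbf{Proof plan.}
The plan is to run the same surgery at the leftmost column that was used in Lemma~\ref{pm}, but now keeping track of forcing numbers: concretely, I would set up a coupled linear recurrence over $\mathbb{Z}[x]$ for $F(G_n,x)$ together with the forcing polynomial $F(H_n,x)$ of the auxiliary graph $H_n$, and then eliminate $F(H_n,x)$. The tool that makes the forcing numbers tractable is Lemma~\ref{mrset} together with Remark~1: since $f(G_n,M)=s(M)$ and the same holds for $H_n$, a forcing number is just the maximum number of pairwise vertex-disjoint $M$-alternating squares, an invariant that splits additively whenever the graph is cut into blocks sharing only deleted vertices.

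First I would decompose $\mathcal{M}(G_n)$ exactly as in Lemma~\ref{pm}. If $u_0v_0\notin M$, then $u_0u_1,v_0v_1,w_1w_2,z_1z_2\in M$ are forced and $M$ restricts to a perfect matching of $G_{n-1}$; here the square $s_{1,1}$ is $M$-alternating, and since its only vertices lying in $G_{n-1}$ have been deleted while no other $M$-alternating square of $G_n$ meets a deleted vertex, $s_{1,1}$ can be adjoined to any maximum resonant set of the restriction, so $f(G_n,M)=f(G_{n-1},M|_{G_{n-1}})+1$; this branch contributes $x\,F(G_{n-1},x)$. If $u_0v_0\in M$, then $M$ restricts to a perfect matching $M'$ of $H_n$, and $s_{1,1}$ is $M$-alternating iff $u_1v_1\in M$; comparing the resonant sets of $M$ and $M'$ shows $f(G_n,M)-f(H_n,M')\in\{0,1\}$, the value being determined by the branch of the decomposition of $\mathcal{M}(H_n)$ (carried out next) to which $M'$ belongs. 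This produces a first identity expressing $F(G_n,x)$ in terms of $F(H_n,x)$, $x\,F(G_{n-1},x)$, and a correction supported on matchings of $H_n$ with $u_1v_1\in M$.

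Next I would decompose $\mathcal{M}(H_n)$ as in Lemma~\ref{pm}: if $u_1v_1\in M$ then $M$ restricts to a perfect matching of $G_{n-1}$ after deleting the first column together with $w_2,z_2$; if $u_1v_1\notin M$ then $s_{1,2}$ and $s_{1,4}$ are $M$-alternating, and in each of the four configurations realising this, $M$ restricts to a perfect matching of $H_{n-1}$. For every branch I would compute the constant by which $s(M)$ changes when the first two columns are peeled off, again via the ``free square'' principle behind Lemma~\ref{mrset} (every vertex of $G_n$ is on the periphery, so a square of the removed block is either vertex-disjoint from the remainder or exchangeable with an equally large alternative), obtaining a monomial factor $x^{j}$, $j\in\{0,1,2\}$, in each branch. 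Collecting these gives a second identity for $F(H_n,x)$ in terms of $F(G_{n-1},x)$, $F(H_{n-1},x)$ and (as explained below) a term one step further back; together with the first identity it forms a linear system over $\mathbb{Z}[x]$ whose elimination of $F(H_n,x)$ and $F(H_{n-1},x)$ yields (\ref{f1}). The initial data $F(G_0,x)=1$, $F(G_1,x)=4x^2+2x$ and $F(G_2,x)=16x^4+12x^3+4x^2$ are obtained by listing the (six, respectively thirty-two) perfect matchings of $G_1$ and $G_2$ and reading off their forcing numbers from Lemma~\ref{mrset}.

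The main obstacle --- and the reason (\ref{f1}) has order three rather than two --- is the bookkeeping at the interface between the peeled block and the remainder. A square straddling the cut need not be ``free'': whether it can be added to, or removed from, a maximum resonant set without changing its size depends on the $M$-alternating squares of the remainder sharing its boundary vertices, hence on the configuration of the next column. Forcing the change in $s(M)$ to be a genuine constant therefore requires analysing two columns at once in those branches, which is precisely what makes $F(G_{n-2},x)$ and $F(G_{n-3},x)$ enter (\ref{f1}); carrying out this two-column case analysis and verifying that the resulting monomials assemble into the coefficients $4x^2+3x$, $-(8x^3+2x^2)$ and $4x^3$ is the bulk of the work. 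As a consistency check, setting $x=1$ must reduce (\ref{f1}) to $\Phi(G_n)=6\Phi(G_{n-1})-4\Phi(G_{n-2})$ by Lemma~\ref{pm}.
\medskip
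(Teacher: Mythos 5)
Your overall architecture is the same as the paper's (a coupled recurrence for $F(G_n,x)$ and $F(H_n,x)$ obtained by peeling columns, with Lemma~\ref{mrset} providing $f=s(M)$ and elimination of the $H$-sequence at the end), and your treatment of $F(H_n,x)$ is sound once the branch $u_1v_1\in M$ is refined by $u_2v_2$, giving $F(H_n,x)=(4x^2+x)F(H_{n-1},x)+xF(G_{n-2},x)$. The genuine gap is in your ``first identity''. You claim that for $u_0v_0\in M$, writing $M'=M\setminus\{u_0v_0\}\in\mathcal{M}(H_n)$, the difference $f(G_n,M)-f(H_n,M')\in\{0,1\}$ is determined by the branch of the decomposition of $\mathcal{M}(H_n)$ containing $M'$, so that the $u_0v_0\in M$ part of $F(G_n,x)$ can be expressed as $F(H_n,x)$ plus a correction supported on $\{u_1v_1\in M\}$. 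This is false, and no bounded-depth (``two columns at once'') interface analysis repairs it. Concretely, in $G_2$ take $M_2=\{u_0v_0,u_1v_1,u_2v_2,u_3u_4,v_3v_4,w_1w_2,w_3w_4,z_1z_2,z_3z_4\}$: the alternating squares are $s_{1,1}$, $u_1v_1u_2v_2$, $u_3v_3u_4v_4$ and the two bumps on columns $3,4$, so $f(G_2,M_2)=3=f(H_2,M_2\setminus\{u_0v_0\})$ and the correction is $0$. In $G_3$ take $M_3=\{u_0v_0,u_1v_1,u_2v_2,u_3v_3,u_4u_5,v_4v_5,u_6v_6,w_1w_2,w_3w_4,w_5w_6,z_1z_2,z_3z_4,z_5z_6\}$: then $f(G_3,M_3)=3$ while $f(H_3,M_3\setminus\{u_0v_0\})=2$, so the correction is $1$. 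Both matchings contain $u_0v_0,u_1v_1$ and even $u_2v_2$, i.e.\ they lie in the same branch of your $H_n$-decomposition (and in the same two-column refinement); in general the correction depends on the parity of the whole run $u_1v_1,u_2v_2,u_3v_3,\dots$ of vertical edges in $M$, which can be arbitrarily long. A quick aggregate check on $G_1$ shows the damage: your identity would give $x+5x^2$ instead of $F(G_1,x)=2x+4x^2$.

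The repair is to never credit a square that straddles the cut: in the sub-branch $u_0v_0,u_1v_1\in M$ do not pass through $H_n$ at all, but delete $\{u_0,u_1,v_0,v_1,w_1,w_2,z_1,z_2\}$, so the credited square $s_{1,1}$ lies entirely on deleted vertices and every other destroyed alternating square meets it; this yields a clean $+1$ and the identity $F(G_n,x)=2xF(G_{n-1},x)+4x^2F(H_{n-1},x)$, in which $F(H_n,x)$ itself never appears (this is exactly how the paper proceeds). Eliminating $F(H_{n-1},x)$ and $F(H_{n-2},x)$ between this and the $H$-recurrence above gives (\ref{f1}); note also that the term $4x^3F(G_{n-3},x)$ comes from this elimination (an index shift), not from an even deeper column analysis as your last paragraph suggests.
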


\begin{proof}
We divide $\mathcal{M}(G_n)$ in two subsets:
$\mathcal{M}_1=\{M\in\mathcal{M}(G_n)|u_0v_0\not\in M\}$ and $\mathcal{M}_2=\{M\in\mathcal{M}(G_n)|u_0v_0\in M\}$.
If $M\in \mathcal{M}_1$,
then $\{u_0u_1,v_0v_1,w_1w_2,z_1z_2\}\subseteq M$
and $s_{1,1}$ is an $M$-alternating square.
Note that the restriction $M^\prime$ of $M$ on $G_{n-1}=G_n\ominus\{u_0,u_1,v_0,v_1,w_1,w_2,z_1,z_2\}$
is a perfect matching of $G_{n-1}$.
Let $\mathcal{A}^\prime$ be a maximum $M^\prime$-resonance set
of $G_{n-1}$.
Then $\mathcal{A}^\prime\cup\{s_{1,1}\}$ is a maximum
$M$-resonance set of $G_{n}$ (see Fig. \ref{SQ1}).
By Lemma  \ref{mrset}, $f(G_n,M)=1+f(G_{n-1},M^\prime)$.
According to Eq. (\ref{eq1}),
\begin{eqnarray}\label{m1}
\nonumber\sum_{M\in \mathcal{M}_1}x^{f(G_n,M)}&=&\sum_{M^{\prime}\in \mathcal{M}(G_{n-1})}x^{1+f(G_{n-1},M^\prime)}\\
&=&x\sum_{M^{\prime}\in \mathcal{M}(G_{n-1})}x^{f(G_{n-1},M^\prime)}
=xF(G_{n-1},x).
\end{eqnarray}

Now suppose $M\in \mathcal{M}_2$,
we divide $\mathcal{M}_2$ in two subsets:
$\mathcal{M}_{2,1}=\{M\in \mathcal{M}_2|u_1v_1\in M\}$
and $\mathcal{M}_{2,2}=\{M\in \mathcal{M}_2|u_1v_1\not\in M\}$.
If $M\in\mathcal{M}_{2,1}$,
then the restriction $M^{\prime\prime}$ of $M$
on $G_{n-1}=G_n\ominus\{u_0,u_1,v_0,v_1,w_1,w_2,z_1,z_2\}$ is
a perfect matching of $G_{n-1}$.
Similarly,

\begin{eqnarray}\label{m2}
\sum_{M\in\mathcal{M}_{2,1}}x^{f(G_n,M)}=
\sum_{M^{\prime\prime}\in\mathcal{M}(G_{n-1})}x^{1+f(G_{n-1},M^{\prime\prime})}
=xF(G_{n-1},x).
\end{eqnarray}
If $M\in \mathcal{M}_{2,2}$,
then $\{s_{1,2},s_{1,4}\}$ is an $M$-resonance set
and the restriction $M^{\prime\prime\prime}$ of $M$
on $H_{n-1}=G_n\ominus\{u_0,u_1,v_0,v_1,w_1,w_2,z_1,z_2,u_2,v_2\}$
is a perfect matching of $H_{n-1}$ (see Fig. \ref{SQ}).
Let $\mathcal{A}^{\prime\prime\prime}$ be a maximum
$M^{\prime\prime\prime}$-resonance set of $H_{n-1}$.
Then $\mathcal{A}^{\prime\prime\prime}\cup\{s_{1,2},s_{1,4}\}$
is a maximum $M$-resonance set of $G_{n}$.
By Lemma \ref{mrset} and Remark 1, $af(G_n,M)=2+|\mathcal{A}^{\prime\prime\prime}|=2+f(H_{n-1},M^{\prime\prime\prime})$.
Note that there are four independent subcases
such that $\{s_{1,2},s_{1,4}\}$ is an $M$-resonance set,
so
\begin{eqnarray}\label{m3}
\nonumber\sum_{M\in\mathcal{M}_{2,2}}x^{af(G_n,M)}
&=&4\sum_{M^{\prime\prime\prime}\in\mathcal{M}(H_{n-1})}x^{2+f(H_{n-1},
M^{\prime\prime\prime})}\\
&=&4x^2F(H_{n-1},x).
\end{eqnarray}

According to Eqs. (\ref{m1}), (\ref{m2}) and (\ref{m3}),
\begin{eqnarray}\label{m4}
\nonumber F(G_n,x)&=&\sum_{M\in \mathcal{M}(G_{n})}x^{f(G_n,M)}\\
\nonumber&=&\sum_{M\in \mathcal{M}_1}x^{f(G_n,M)}+\sum_{M\in \mathcal{M}_2}x^{f(G_n,M)}\\
\nonumber&=&\sum_{M\in \mathcal{M}_1}x^{f(G_n,M)}+\sum_{M\in \mathcal{M}_{2,1}}x^{f(G_n,M)}
+\sum_{M\in \mathcal{M}_{2,2}}x^{f(G_n,M)}\\
&=&2xF(G_{n-1},x)+4x^2F(H_{n-1},x).
\end{eqnarray}

On the other hand,
we divide $\mathcal{M}(H_n)$ in two subsets:
$\mathcal{N}_1=\{M\in\mathcal{M}(H_n)|u_1v_1\in M\}$
and $\mathcal{N}_2=\{M\in\mathcal{M}(H_n)|u_1v_1\not\in M\}$.
Further, $\mathcal{N}_1$ can be divided into two subsets：
$\mathcal{N}_{1,1}=\{M\in\mathcal{N}_1|u_2v_2\in M \}$
and $\mathcal{N}_{1,2}=\{M\in\mathcal{N}_1|u_2v_2\not\in M\}$.
Similarly,
$\sum_{M\in\mathcal{N}_{1,1}}x^{f(H_n,M)}=xF(H_{n-1},x)$,
$\sum_{M\in\mathcal{N}_{1,2}}x^{f(H_n,M)}=xF(G_{n-2},x)$,
and $\sum_{M\in\mathcal{N}_{2}}x^{f(H_n,M)}=4x^2F(H_{n-1},x)$.
Therefore
\begin{eqnarray}\label{m5}
F(H_n,x)=(4x^2+x)F(H_{n-1},x)+xF(G_{n-2},x).
\end{eqnarray}

By Eq. (\ref{m5}) minus Eq. (\ref{m4}),
we can obtain
\begin{equation*}
F(H_n,x)-xF(H_{n-1},x)=F(G_n,x)-2xF(G_{n-1},x)+xF(G_{n-2},x),
\end{equation*}
thus
\begin{equation}\label{m6}
4xF(H_n,x)-4x^2F(H_{n-1},x)=4xF(G_n,x)-8x^2F(G_{n-1},x)+4x^2F(G_{n-2},x).
\end{equation}
According to  Eq. (\ref{m4}),
$4x^2F(H_{n-1},x)=F(G_n,x)-2xF(G_{n-1},x)$,
substituting it into  Eq. (\ref{m6}),
we have
\begin{equation*}
4xF(H_n,x)=(4x+1)F(G_n,x)-(8x^2+2x)F(G_{n-1},x)+4x^2F(G_{n-2},x),
\end{equation*}
so
\begin{equation*}
4xF(H_{n-1},x)=(4x+1)F(G_{n-1},x)-(8x^2+2x)F(G_{n-2},x)+4x^2F(G_{n-3},x),
\end{equation*}
substituting it into  Eq. (\ref{m4}),
then Eq. (\ref{f1}) is obtained.
\end{proof}

By Theorem \ref{main1}, we can deduce an explicit expression as follow.

\begin{theorem}\label{main2}{\bf .}
$F(G_n,x)=$
\begin{eqnarray}\label{m7}
\nonumber 2^{2n}x^{2n}&+&x^n\sum_{m=0}^{n-1}\Bigg{(}\sum_{i=max\{\lceil\frac{n}{3}\rceil,m\}}^{n}
\sum_{j=max\{\lceil\frac{n-i}{2}\rceil,m+n-2i\}}^{n-i}
\sum_{k=m-(i+2j-n)}^{m}(-1)^{i+2j-n}2^{n+2m-i}\\
\nonumber&~&\cdot3^{i-j-k}{i \choose j}{j\choose n-i-j}{i-j\choose k}{i+2j-n\choose m-k}-\\
&~&\sum_{i=max\{\lceil\frac{n-1}{3}\rceil,m\}}^{n-1}\sum_{j=max\{\lceil\frac{n-i-1}{2}\rceil,m+n-2i-1\}}^{n-i-1}
\nonumber\sum_{k=m-(i+2j-n+1)}^{m}(-1)^{i+2j-n+1}2^{n+2m-i-1}\\
&~&\cdot3^{i-j-k}{i \choose j}{j\choose n-i-j-1}{i-j\choose k}
{i+2j-n+1\choose m-k}\Bigg{)}x^m.
\end{eqnarray}
\end{theorem}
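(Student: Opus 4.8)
The plan is to solve the three-term recurrence (\ref{f1}) by generating functions and then extract, for each $n$, the coefficient of every power of $x$ in closed form. First I would set $g(t)=\sum_{n\ge 0}F(G_n,x)t^n$, treating $x$ as a parameter. Multiplying (\ref{f1}) by $t^n$, summing over $n\ge 3$, and inserting $F(G_0,x)=1$, $F(G_1,x)=4x^2+2x$, $F(G_2,x)=16x^4+12x^3+4x^2$ turns the recurrence into
\begin{equation*}
\bigl(1-(4x^2+3x)t+(8x^3+2x^2)t^2-4x^3t^3\bigr)\,g(t)=N(t,x),
\end{equation*}
with $N(t,x)$ a polynomial of degree $\le 2$ in $t$. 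A short computation shows that the $t^2$-coefficient of $N$ cancels identically, so $N(t,x)=1-xt$; writing $D(t,x)=1-(4x^2+3x)t+(8x^3+2x^2)t^2-4x^3t^3$ we obtain $g(t)=(1-xt)/D(t,x)$, hence, writing $[t^n]$ for the extraction of the coefficient of $t^n$, $F(G_n,x)=[t^n](1/D(t,x))-x\,[t^{n-1}](1/D(t,x))$ for $n\ge 1$.

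The next step is to expand $1/D(t,x)$ as a formal power series in $t$ over $\mathbb{Z}[x]$: since $D(0,x)=1$,
\begin{equation*}
\frac{1}{D(t,x)}=\sum_{\ell\ge 0}\bigl((4x^2+3x)t-(8x^3+2x^2)t^2+4x^3t^3\bigr)^{\ell},
\end{equation*}
and each $t^n$-coefficient is a finite sum, so the rearrangements below are legitimate. Applying the multinomial theorem to the $\ell$-th power and then the binomial theorem to $(4x^2+3x)^a=\sum_s\binom{a}{s}4^s\,3^{a-s}x^{a+s}$ and to $(8x^3+2x^2)^b=\sum_u\binom{b}{u}2^{b+2u}x^{2b+u}$ (the factor $(4x^3)^c=2^{2c}x^{3c}$ being a monomial), I would read off
\begin{equation*}
[t^n]\,\frac{1}{D(t,x)}=\sum\binom{\ell}{a,b,c}\binom{a}{s}\binom{b}{u}(-1)^{b}\,2^{\,2s+2u+b+2c}\,3^{\,a-s}\,x^{\,n+s+u},
\end{equation*}
the sum ranging over all $\ell=a+b+c$ with $a+2b+3c=n$, $0\le s\le a$, $0\le u\le b$.

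Then the main work is the change of summation variables $i=\ell$, $j=b+c$, $k=s$, $m=s+u$, which forces $c=n-i-j$, $b=i+2j-n$, $a=i-j$, $u=m-k$. Under it the multinomial coefficient splits as $\binom{\ell}{a,b,c}=\binom{i}{j}\binom{j}{n-i-j}$, the pair $\binom{a}{s}\binom{b}{u}$ becomes $\binom{i-j}{k}\binom{i+2j-n}{m-k}$, the exponent of $2$ collapses to $n+2m-i$, that of $3$ to $i-j-k$, and that of $x$ to $n+m$; the bounds $\max\{\lceil n/3\rceil,m\}\le i\le n$, $\max\{\lceil (n-i)/2\rceil,\,m+n-2i\}\le j\le n-i$, $m-(i+2j-n)\le k\le m$ are the ranges outside of which some argument of one of the four binomial coefficients turns negative. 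Within $[t^n](1/D(t,x))$ the monomial $x^{2n}$ occurs only for $i=n$, $j=0$, $k=m=n$, contributing exactly $2^{2n}x^{2n}$; pulling this term out, handling $x\,[t^{n-1}](1/D(t,x))$ the same way (it is the same triple sum with $n$ replaced by $n-1$ throughout and an extra factor $x$, i.e.\ the second bracketed triple sum of (\ref{m7}), down to the sign $(-1)^{i+2j-n+1}$ and the shifted binomial arguments), and recombining via $F(G_n,x)=[t^n](1/D(t,x))-x\,[t^{n-1}](1/D(t,x))$ produces (\ref{m7}).

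The hard part is precisely this last bookkeeping: verifying the factorization of the multinomial coefficient under the substitution, carrying the three exponents through without an off-by-one error, and confirming that the $\max$- and $\lceil\cdot\rceil$-bounds in (\ref{m7}) really are the valid summation ranges --- for instance the lower limit $m+n-2i$ on $j$ is exactly what makes the $k$-interval $[\,m-(i+2j-n),\,m\,]$ meet $[0,\,i-j]$, and the ceilings encode the parity hidden in $a+2b+3c=n$ with $a,b,c\ge 0$. A viable alternative that sidesteps generating functions is to take (\ref{m7}) as an ansatz and verify directly that it satisfies (\ref{f1}) and the values at $n=1,2$; but that merely trades the re-indexing bookkeeping for a comparably long three-term-recurrence identity among the triple sums.
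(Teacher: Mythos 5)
Your proposal is correct and follows essentially the same route as the paper: derive $G(t)=(1-xt)/\bigl(1-(4x^2+3x)t+(8x^3+2x^2)t^2-4x^3t^3\bigr)$ from Theorem \ref{main1}, expand the geometric series, reindex, and split off the top term $2^{2n}x^{2n}$, with the $-xt$ part of the numerator giving the second (shifted) triple sum. The only difference is organizational: the paper factors $xt$ out and stages the expansion through powers of $(4x+3)$ and $(4x+1)$ before expanding in $x$, whereas you apply the multinomial theorem directly and then binomially expand each factor — the resulting change of variables and exponent bookkeeping ($2^{n+2m-i}$, $3^{i-j-k}$, the $\max$/ceiling bounds) check out either way.
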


\begin{proof}
For convenience, let $F_n:=F(G_n,x)$.
Then the generating function of the sequence $\{F_n\}_{n=0}^{\infty}$ is
\begin{eqnarray}
\nonumber G(t)=\sum_{n=0}^{\infty}F_nt^n= F_0t^0+F_1t^1+F_2t^2+\sum_{n=3}^{\infty}F_nt^n.
\end{eqnarray}
By Theorem \ref{main1},
\begin{eqnarray}\label{m8}
\nonumber G(t)&=& F_0t^0+F_1t^1+F_2t^2+ (4x^2+3x)\sum_{n=3}^{\infty}F_{n-1}t^n-(8x^3+2x^2)\sum_{n=3}^{\infty}F_{n-2}t^n\\
\nonumber &~&+4x^3\sum_{n=3}^{\infty}F_{n-3}t^n\\
\nonumber &=&F_0t^0+F_1t^1+F_2t^2
 +(4x^2+3x)t(\sum_{n=0}^{\infty}F_{n}t^n-F_0t^0-F_1t^1)\\
\nonumber&~&
-(8x^3+2x^2)t^2(\sum_{n=0}^{\infty}F_{n}t^n-F_0t^0)+4x^3t^3\sum_{n=0}^{\infty}F_{n}t^n\\
\nonumber&=& 1-xt+((4x^2+3x)t-(8x^3+2x^2)t^2+4x^3t^3)G(t).
\end{eqnarray}
So
\begin{eqnarray}
\nonumber G(t)&=&\FS{1-xt}{1-((4x^2+3x)t-(8x^3+2x^2)t^2+4x^3t^3)}\\
\nonumber &=&(1-xt)\sum_{i=0}^{\infty}((4x^2+3x)t-(8x^3+2x^2)t^2+4x^3t^3)^i,
\end{eqnarray}
by the binomial theorem
\begin{eqnarray}
\nonumber  &~&\sum_{i=0}^{\infty}((4x^2+3x)t-(8x^3+2x^2)t^2+4x^3t^3)^i\\
\nonumber&=&\sum_{i=0}^{\infty}x^it^i\sum_{j=0}^{i}{i \choose j}
(4x^2t^2-(8x^2+2x)t)^j(4x+3)^{i-j}\\
\nonumber &=&\sum_{i=0}^{\infty}\sum_{j=0}^{i}\sum_{k=0}^{j}
(-1)^{j-k}2^{j+k}{i \choose j}{j\choose k}(4x+3)^{i-j}(4x+1)^{j-k}x^{i+j+k}t^{i+j+k}\\
\nonumber &=&\sum_{n=0}^{\infty}\sum_{i=\lceil\frac{n}{3}\rceil}^{n}\sum_{j=\lceil\frac{n-i}{2}\rceil}^{n-i}
(-1)^{i+2j-n}2^{n-i}{i \choose j}{j\choose n-i-j}(4x+3)^{i-j}(4x+1)^{i+2j-n}x^{n}t^{n}.
\end{eqnarray}
Similarly,
\begin{eqnarray}
\nonumber &~&xt\sum_{i=0}^{\infty}((4x^2+3x)t-(8x^3+2x^2)t^2+4x^3t^3)^i\\
\nonumber &=&\sum_{n=1}^{\infty}\sum_{i=\lceil\frac{n-1}{3}\rceil}^{n-1}
\sum_{j=\lceil\frac{n-i-1}{2}\rceil}^{n-i-1}
(-1)^{i+2j-n+1}2^{n-i-1}{i \choose j}{j\choose n-i-j-1}(4x+3)^{i-j}\\
\nonumber ~~~~~~~~~&~&\cdot(4x+1)^{i+2j-n+1}x^{n}t^{n}.
\end{eqnarray}

Therefore
\begin{eqnarray}
\nonumber &~&(1-xt)\sum_{i=0}^{\infty}((4x^2+3x)t-(8x^3+2x^2)t^2+4x^3t^3)^i\\
\nonumber &=&1+\sum_{n=1}^{\infty}x^n\Bigg{(}
\sum_{i=\lceil\frac{n}{3}\rceil}^{n}\sum_{j=\lceil\frac{n-i}{2}\rceil}^{n-i}
(-1)^{i+2j-n}2^{n-i}{i \choose j}{j\choose n-i-j}(4x+3)^{i-j}\\
\nonumber &~&\cdot(4x+1)^{i+2j-n}-\sum_{i=\lceil\frac{n-1}{3}\rceil}^{n-1}
\sum_{j=\lceil\frac{n-i-1}{2}\rceil}^{n-i-1}
(-1)^{i+2j-n+1}2^{n-i-1}{i \choose j}{j\choose n-i-j-1}\\
\nonumber ~~~~~~~~~&~&\cdot(4x+3)^{i-j}(4x+1)^{i+2j-n+1}\Bigg{)}t^{n}\\
\nonumber &=&1+\sum_{n=1}^{\infty}F_nt^n,
\end{eqnarray}
which implies that
\begin{eqnarray}\label{m9}
\nonumber F_n&=&x^n\Bigg{(}
\sum_{i=\lceil\frac{n}{3}\rceil}^{n}\sum_{j=\lceil\frac{n-i}{2}\rceil}^{n-i}
(-1)^{i+2j-n}2^{n-i}{i \choose j}{j\choose n-i-j}(4x+3)^{i-j}(4x+1)^{i+2j-n}\\
\nonumber &~&-\sum_{i=\lceil\frac{n-1}{3}\rceil}^{n-1}
\sum_{j=\lceil\frac{n-i-1}{2}\rceil}^{n-i-1}
(-1)^{i+2j-n+1}2^{n-i-1}{i \choose j}{j\choose n-i-j-1}(4x+3)^{i-j}\\
 &~&\cdot(4x+1)^{i+2j-n+1}\Bigg{)}.
\end{eqnarray}
Moreover,
\begin{eqnarray}
\nonumber &~&(4x+3)^{i-j}(4x+1)^{i+2j-n}\\
\nonumber &=&\sum_{m=0}^{2i+j-n}\sum_{k=m-(i+2j-n)}^{m}3^{i-j-k}4^{m}{i-j\choose k}{i+2j-n\choose m-k}x^{m},
\end{eqnarray}
thus
\begin{eqnarray}\label{m10}
\nonumber &~&\sum_{i=\lceil\frac{n}{3}\rceil}^{n}\sum_{j=\lceil\frac{n-i}{2}\rceil}^{n-i}
(-1)^{i+2j-n}2^{n-i}{i \choose j}{j\choose n-i-j}(4x+3)^{i-j}(4x+1)^{i+2j-n}\\
\nonumber&=&\sum_{m=0}^{n}\sum_{i=max\{\lceil\frac{n}{3}\rceil,m\}}^{n}
\sum_{j=max\{\lceil\frac{n-i}{2}\rceil,m+n-2i\}}^{n-i}
\sum_{k=m-(i+2j-n)}^{m}(-1)^{i+2j-n}2^{n+2m-i}3^{i-j-k}{i \choose j}\\
\nonumber&~&\cdot{j\choose n-i-j}{i-j\choose k}{i+2j-n\choose m-k}x^{m}\\
\nonumber&=&2^{2n}x^n+\sum_{m=0}^{n-1}\sum_{i=max\{\lceil\frac{n}{3}\rceil,m\}}^{n}
\sum_{j=max\{\lceil\frac{n-i}{2}\rceil,m+n-2i\}}^{n-i}
\sum_{k=m-(i+2j-n)}^{m}(-1)^{i+2j-n}2^{n+2m-i}3^{i-j-k}{i \choose j}\\
&~&\cdot{j\choose n-i-j}{i-j\choose k}{i+2j-n\choose m-k}x^{m}.
\end{eqnarray}
Similarly,
\begin{eqnarray}\label{m11}
\nonumber &~&\sum_{i=\lceil\frac{n-1}{3}\rceil}^{n-1}
\sum_{j=\lceil\frac{n-i-1}{2}\rceil}^{n-i-1}
(-1)^{i+2j-n+1}2^{n-i-1}{i \choose j}{j\choose n-i-j-1}(4x+3)^{i-j}(4x+1)^{i+2j-n+1}\\
\nonumber &=&\sum_{m=0}^{n-1}\sum_{i=max\{\lceil\frac{n-1}{3}\rceil,m\}}^{n-1}
\sum_{j=max\{\lceil\frac{n-i-1}{2}\rceil,m+n-2i-1\}}^{n-i-1}
\sum_{k=m-(i+2j-n+1)}^{m}(-1)^{i+2j-n+1}2^{n+2m-i-1}\\
&~&\cdot3^{i-j-k}{i \choose j}{j\choose n-i-j-1}{i-j\choose k}
{i+2j-n+1\choose m-k}x^m.
\end{eqnarray}
Finally, we can obtain Eq. (\ref{m7}) by substituting Eqs. (\ref{m10}) and (\ref{m11}) into Eq. (\ref{m9}).
\end{proof}

By Theorem \ref{main2}, $x^i$ is a term of
$F(G_n,x)$ for $i=n,n+1,\ldots,2n$, thus we have the following corollary.

\begin{corollary}\label{Spec}{\bf .}
\begin{enumerate}
  \item $f(G_n)=n$, $F(G_n)=2n$;
  \item Spec$_f(G_n)=[n,2n]$.
\end{enumerate}
\end{corollary}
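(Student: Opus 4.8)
The plan is to read off $\mathrm{Spec}_f(G_n)$, $f(G_n)$ and $F(G_n)$ from the support of the polynomial $F(G_n,x)$. Since by definition $F(G_n,x)=\sum_{i=f(G_n)}^{F(G_n)}w(G_n,i)x^i$ with all $w(G_n,i)\ge 0$, we have $i\in\mathrm{Spec}_f(G_n)$ iff the coefficient of $x^i$ in $F(G_n,x)$ is nonzero, and then $f(G_n)$ and $F(G_n)$ are the least and greatest such $i$. From the closed form in Theorem \ref{main2}, $F(G_n,x)$ is the polynomial $2^{2n}x^{2n}+x^n\sum_{m=0}^{n-1}c_{n,m}x^m$, so every monomial has exponent in $\{n,n+1,\dots,2n\}$ and the coefficient of $x^{2n}$ equals $2^{2n}\neq 0$. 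Hence at once $\mathrm{Spec}_f(G_n)\subseteq[n,2n]$ and $F(G_n)=2n$. What remains is to show that for each $m$ with $0\le m\le n-1$ the coefficient of $x^{n+m}$ is positive; since it counts perfect matchings it suffices to show it is nonzero.

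To obtain this positivity cleanly I would not extract $c_{n,m}$ from the sign-alternating sum (\ref{m7}), but instead work with the two sum-free recurrences produced inside the proof of Theorem \ref{main1}, namely $F(G_n,x)=2x\,F(G_{n-1},x)+4x^2F(H_{n-1},x)$ (Eq.\ (\ref{m4})) and $F(H_n,x)=(4x^2+x)F(H_{n-1},x)+xF(G_{n-2},x)$ (Eq.\ (\ref{m5})). Because every polynomial occurring here has nonnegative coefficients, no cancellation is possible, so the support of a sum is the union of the supports of the summands. Writing $\mathrm{supp}$ for the set of exponents actually present, this gives $\mathrm{supp}\,F(G_n,x)=\bigl(1+\mathrm{supp}\,F(G_{n-1},x)\bigr)\cup\bigl(2+\mathrm{supp}\,F(H_{n-1},x)\bigr)$ and $\mathrm{supp}\,F(H_n,x)=\bigl(\{1,2\}+\mathrm{supp}\,F(H_{n-1},x)\bigr)\cup\bigl(1+\mathrm{supp}\,F(G_{n-2},x)\bigr)$.

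Then I would prove by induction on $n$ the pair of statements $\mathrm{supp}\,F(G_n,x)=[n,2n]$ (for $n\ge 1$) and $\mathrm{supp}\,F(H_n,x)=[n-1,2n]$ (for $n\ge 2$, with the value $[1,2]$ at $n=1$). The base cases $n=1,2$ are checked from the given data $F(G_0,x)=1$, $F(G_1,x)=4x^2+2x$, $F(G_2,x)=16x^4+12x^3+4x^2$ together with $F(H_1,x)=4x^2+x$ and $F(H_2,x)=16x^4+8x^3+x^2+x$ (obtainable from (\ref{m4})–(\ref{m5})). For $n\ge 3$ the inductive step is pure interval arithmetic: $\bigl(1+[n-1,2n-2]\bigr)\cup\bigl(2+[n-2,2n-2]\bigr)=[n,2n-1]\cup[n,2n]=[n,2n]$, and, using $\{1,2\}+[a,b]=[a+1,b+2]$, $\bigl(\{1,2\}+[n-2,2n-2]\bigr)\cup\bigl(1+[n-2,2n-4]\bigr)=[n-1,2n]\cup[n-1,2n-3]=[n-1,2n]$. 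In particular $\mathrm{Spec}_f(G_n)=[n,2n]$, whence $f(G_n)=n$ and (re-confirming) $F(G_n)=2n$.

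The one genuine obstacle is precisely the positivity of the intermediate coefficients $c_{n,m}$: the explicit formula (\ref{m7}) has alternating signs, so establishing it directly from that expression is delicate, and the point of the proposal is to bypass (\ref{m7}) via the nonnegative-coefficient recurrences (\ref{m4})–(\ref{m5}), after which only routine bookkeeping of supports/intervals remains. A more combinatorial alternative would avoid the polynomial altogether: for each $i\in[n,2n]$ one exhibits a perfect matching $M$ of $G_n$ carrying exactly $i$ independent $M$-alternating squares — built block by block, with $i-n$ blocks put in a ``two-square'' state and the remaining $n-i+n=2n-i$ blocks in a ``one-square'' state — and then invokes $f(G_n,M)=s(M)$ from Lemma \ref{mrset}; there the delicate step is verifying that the construction admits no unintended $M$-alternating square, so that $s(M)$ is exactly $i$ rather than merely at least $i$.
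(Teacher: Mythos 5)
Your proof is correct, but it takes a different (and in fact more careful) route than the paper. The paper deduces the corollary directly from the closed form of Theorem~\ref{main2}: it observes that the expression is $2^{2n}x^{2n}+x^n\sum_{m=0}^{n-1}c_{n,m}x^m$ and simply asserts that $x^i$ ``is a term'' for every $i\in[n,2n]$, without addressing the point you correctly isolate as the only real issue, namely that the coefficients $c_{n,m}$ come from sign-alternating sums and their nonvanishing is not evident from Eq.~(\ref{m7}) itself (nonnegativity is automatic since they count perfect matchings, but positivity still needs an argument). Your alternative bypasses Theorem~\ref{main2} entirely: you use the intermediate recurrences (\ref{m4}) and (\ref{m5}) from the proof of Theorem~\ref{main1}, whose multipliers $2x$, $4x^2$, $4x^2+x$, $x$ have nonnegative coefficients, so supports add with no cancellation, and a two-variable induction on $\mathrm{supp}\,F(G_n,x)=[n,2n]$ and $\mathrm{supp}\,F(H_n,x)=[n-1,2n]$ closes the argument; your base values $F(H_1,x)=4x^2+x$ and $F(H_2,x)=16x^4+8x^3+x^2+x$ and the interval arithmetic in the inductive step all check out against the data in the paper. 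What your route buys is a complete justification of the positivity of every coefficient between $x^n$ and $x^{2n}$ (hence of $\mathrm{Spec}_f(G_n)=[n,2n]$, $f(G_n)=n$, $F(G_n)=2n$), filling a gap the paper glosses over; what the paper's route buys is brevity, since it piggybacks on the already-derived explicit formula. Your sketched combinatorial alternative (exhibiting, for each $i\in[n,2n]$, a matching with exactly $i$ independent alternating squares and invoking Lemma~\ref{mrset}) is also viable, and you rightly flag that its delicate point is ruling out unintended $M$-alternating squares so that $s(M)$ equals $i$ exactly.
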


In the following we calculate the degree of freedom of $G_n$,
and reveal its  asymptotic behavior with respect to $\Phi(G_n)$.
\begin{theorem}\label{main111}{\bf .}
\begin{eqnarray}\label{m222}
\nonumber IDF(G_n)=-4&+&\frac{50+22\sqrt{5}}{25}(3-\sqrt{5})^n+\frac{13-3\sqrt{5}}{10}n(3-\sqrt{5})^n\\
&+&\frac{50-22\sqrt{5}}{25}(3+\sqrt{5})^n+\frac{13+3\sqrt{5}}{10}n(3+\sqrt{5})^n.
\end{eqnarray}
\end{theorem}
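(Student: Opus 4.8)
The plan is to pass from (\ref{f1}) to a recurrence for $IDF(G_n)=\frac{d}{dx}F(G_n,x)\big|_{x=1}$ by differentiation. Write $D_n:=IDF(G_n)$ and recall $F(G_n,1)=\Phi(G_n)$. Differentiating both sides of (\ref{f1}) with respect to $x$ by the product rule and then setting $x=1$, and noting that $4x^2+3x$, $8x^3+2x^2$, $4x^3$ take the values $7,10,4$ at $x=1$ with derivatives $11,28,12$ there, one gets for $n\geq3$
\begin{equation*}
D_n=7D_{n-1}-10D_{n-2}+4D_{n-3}+11\Phi(G_{n-1})-28\Phi(G_{n-2})+12\Phi(G_{n-3}).
\end{equation*}
The three initial values $D_0=0$, $D_1=10$, $D_2=108$ are read off directly from $F(G_0,x)=1$, $F(G_1,x)=4x^2+2x$, $F(G_2,x)=16x^4+12x^3+4x^2$ by evaluating $\frac{d}{dx}$ at $x=1$.

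Next I would solve this inhomogeneous recurrence. Its homogeneous characteristic polynomial is $x^3-7x^2+10x-4=(x-1)(x^2-6x+4)$, with roots $1$ and $3\pm\sqrt5$; since $x^2-6x+4$ is precisely the characteristic polynomial of (\ref{pms}), Theorem \ref{PM} shows the forcing term $11\Phi(G_{n-1})-28\Phi(G_{n-2})+12\Phi(G_{n-3})$ is itself a linear combination of $(3-\sqrt5)^n$ and $(3+\sqrt5)^n$. Because $3\pm\sqrt5$ are simple roots of the homogeneous polynomial, this is the resonant case, so a particular solution has the shape $\tilde D\,n(3-\sqrt5)^n+\tilde E\,n(3+\sqrt5)^n$; plugging this ansatz into the recurrence and using $p(E)[n r^n]=r^{n+1}p'(r)$ for a root $r$ of $p(x)=x^3-7x^2+10x-4$ determines $\tilde D=\frac{13-3\sqrt5}{10}$ and $\tilde E=\frac{13+3\sqrt5}{10}$. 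The general solution is then
\begin{equation*}
D_n=A+B(3-\sqrt5)^n+C(3+\sqrt5)^n+\tilde D\,n(3-\sqrt5)^n+\tilde E\,n(3+\sqrt5)^n ,
\end{equation*}
and imposing $D_0=0$, $D_1=10$, $D_2=108$ pins down $A=-4$, $B=\frac{50+22\sqrt5}{25}$, $C=\frac{50-22\sqrt5}{25}$, which is exactly (\ref{m222}). As an alternative route one could differentiate the generating function $G(t)=\frac{1-xt}{1-(4x^2+3x)t+(8x^3+2x^2)t^2-4x^3t^3}$ obtained in the proof of Theorem \ref{main2} with respect to $x$, set $x=1$, and extract coefficients by partial fractions; there the factor $(x^2-6x+4)^2$ appearing in the denominator at $x=1$ makes the doubled roots $3\pm\sqrt5$, hence the $n(3\pm\sqrt5)^n$ terms, manifest.

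The computation is essentially bookkeeping; the one substantive point is the resonance between the growth rates $3\pm\sqrt5$ of the forcing term and the roots of the homogeneous equation, which is what produces the $n(3\pm\sqrt5)^n$ contributions in (\ref{m222}). The places to be careful are choosing the correct resonant ansatz, solving the resulting small linear systems over $\mathbb{Q}(\sqrt5)$ and simplifying the surds, and keeping in mind that the differentiated recurrence is only valid for $n\geq3$, so the three base cases must be supplied separately; computing $D_3=852$ from $F(G_3,x)$ and checking it against the recurrence serves as a useful consistency test.
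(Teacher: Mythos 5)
Your proposal is correct, and it starts exactly where the paper does: differentiating the recurrence (\ref{f1}) and setting $x=1$ gives the same relation $IDF_n=7IDF_{n-1}-10IDF_{n-2}+4IDF_{n-3}+11\Phi_{n-1}-28\Phi_{n-2}+12\Phi_{n-3}$ for $n\geq3$, and your initial values $D_0=0$, $D_1=10$, $D_2=108$ (and the check $D_3=852$) are right. The two arguments then finish differently. The paper uses Lemma \ref{pm} to eliminate the $\Phi$-terms entirely, arriving at the fifth-order homogeneous recurrence (\ref{m223}) with characteristic roots $1$ and double roots $3\pm\sqrt{5}$, and pins down five constants from the computed values $IDF_4,\dots,IDF_8$. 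You instead keep the third-order inhomogeneous recurrence, feed in the closed form of $\Phi_n$ from Theorem \ref{PM}, and treat the resonance with the particular-solution ansatz $\tilde D\,n(3-\sqrt5)^n+\tilde E\,n(3+\sqrt5)^n$; your coefficients do check out ($\tilde D=\lambda_0\frac{82-38\sqrt5}{50-22\sqrt5}=\frac{13-3\sqrt5}{10}$, conjugately for $\tilde E$), and the remaining constants $A=-4$, $B=\frac{50+22\sqrt5}{25}$, $C=\frac{50-22\sqrt5}{25}$ are forced by the nonsingular Vandermonde system at $n=0,1,2$, so the closed form agrees with $IDF_n$ for all $n$. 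What each buys: your route needs only three initial values and no elimination, at the price of computing a resonant particular solution; the paper's route avoids any particular solution but requires the fifth-order elimination and five initial values $IDF_4$--$IDF_8$. One small slip in your optional generating-function remark: after differentiating in $x$ and setting $x=1$ the squared factor in the denominator is $(1-6t+4t^2)^2$ (double poles in $t$ at $1/(3\pm\sqrt5)$), not $(x^2-6x+4)^2$; this does not affect your main argument.
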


\begin{proof}
According to Theorem \ref{main1},
\begin{eqnarray*}
\FS{d}{dx}F(G_n,x)&=&(8x+3)F(G_{n-1},x)+(4x^2+3x)\FS{d}{dx}F(G_{n-1},x)
-(24x^2+4x)F(G_{n-2},x)\\
&~&-(8x^3+2x^2)\FS{d}{dx}F(G_{n-2},x)+12x^2F(G_{n-3},x)+4x^3\FS{d}{dx}F(G_{n-3},x).
\end{eqnarray*}
For convenience, let $IDF_n:=IDF(G_n)$.
Then
\begin{eqnarray*}
IDF_{n}&=&\FS{d}{dx}F(G_n,x)\Big{|}_{x=1}\\
&=& 7IDF_{n-1}-10IDF_{n-2}+4IDF_{n-3}+11\Phi_{n-1}-28\Phi_{n-2}+12\Phi_{n-3}.
\end{eqnarray*}
By Lemma \ref{pm},
\begin{eqnarray*}
IDF_{n}
&=&7IDF_{n-1}-10IDF_{n-2}+4IDF_{n-3}+11\Phi_{n-1}-3(6\Phi_{n-2}-4\Phi_{n-3})
-10\Phi_{n-2}\\
&=&7IDF_{n-1}-10IDF_{n-2}+4IDF_{n-3}+8\Phi_{n-1}-10\Phi_{n-2}.
\end{eqnarray*}
Therefore
\begin{eqnarray*}
IDF_{n+1}=7IDF_{n}-10IDF_{n-1}+4IDF_{n-2}+8\Phi_{n}-10\Phi_{n-1},
\end{eqnarray*}
and
\begin{eqnarray*}
IDF_{n+2}
&=&7IDF_{n+1}-10IDF_{n}+4IDF_{n-1}+8\Phi_{n+1}-10\Phi_{n}.
\end{eqnarray*}
Note that
\begin{eqnarray*}
8\Phi_{n+1}-10\Phi_{n}
&=&8(6\Phi_{n}-4\Phi_{n-1})-10(6\Phi_{n-1}-4\Phi_{n-2})\\
&=&6(8\Phi_{n}-10\Phi_{n-1})-4(8\Phi_{n-1}-10\Phi_{n-2})\\
&=&6(7IDF_{n}-10IDF_{n-1}+4IDF_{n-2}+8\Phi_{n}-10\Phi_{n-1})\\
&~&-4(7IDF_{n-1}-10IDF_{n-2}+4IDF_{n-3}+8\Phi_{n-1}-10\Phi_{n-2})\\
&~&-6(7IDF_{n}-10IDF_{n-1}+4IDF_{n-2})\\
&~&+4(7IDF_{n-1}-10IDF_{n-2}+4IDF_{n-3})\\
&=&6IDF_{n+1}-46IDF_{n}+88IDF_{n-1}-64IDF_{n-2}+16IDF_{n-3},
\end{eqnarray*}
so
\begin{equation}\label{m223}
IDF_{n+2}=13IDF_{n+1}-56IDF_{n}+92IDF_{n-1}-64IDF_{n-2}+16IDF_{n-3}.
\end{equation}

The characteristics equation of Eq.
(\ref{m223}) is $x^5-13x^{4}+56x^3-92x^2+64x-16=0$,
whose roots are $x_0=1$, $x_1=x_2=3-\sqrt{5}$, $x_3=x_4=3+\sqrt{5}$.
Suppose the general solution of (\ref{m223})
is $IDF_n=\lambda_0+\lambda_1(3-\sqrt{5})^n+\lambda_2n(3-\sqrt{5})^n+
\lambda_3(3+\sqrt{5})^n+\lambda_4n(3+\sqrt{5})^n$.
According to the initial values $IDF_4=5948$,
$IDF_5=38908$, $IDF_6=244348$, $IDF_7=1492092$, and $IDF_8=8926204$,
we can obtain $\lambda_0=-4$, $\lambda_1=\frac{50+22\sqrt{5}}{25}$,
$\lambda_2=\frac{13-3\sqrt{5}}{10}$,
$\lambda_3=\frac{50-22\sqrt{5}}{25}$, and $\lambda_4=\frac{13+3\sqrt{5}}{10}$.
Therefore Eq. (\ref{m222}) holds for $n\geq4$.
In fact, (\ref{m222}) is true for $n=0,1,2,3$ as well,
the proof is completed.
\end{proof}

By Theorems \ref{PM} and \ref{main111},
we have the following corollary.

\begin{corollary}
\begin{equation*}
\lim\limits_{n\rightarrow\infty}\FS{IDF_n}{n\Phi_n}
=\FS{-5+6\sqrt{5}}{5}.
\end{equation*}
\end{corollary}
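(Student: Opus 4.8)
The plan is to read off the asymptotics directly from the two closed forms already established. First I would record the elementary inequalities $0 < 3-\sqrt5 < 1 < 3+\sqrt5$, so that among all the exponential and (polynomial $\times$ exponential) terms appearing in $IDF_n$ from Theorem \ref{main111} and in $n\Phi_n$ from Theorem \ref{PM}, the fastest growing one is the coefficient of $n(3+\sqrt5)^n$. Concretely, $IDF_n$ is dominated by $\frac{13+3\sqrt5}{10}\,n(3+\sqrt5)^n$ and $n\Phi_n$ is dominated by $\frac{5+3\sqrt5}{10}\,n(3+\sqrt5)^n$; both leading coefficients are strictly positive, so there is no cancellation at the top order.

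Next I would divide the numerator and denominator of $\dfrac{IDF_n}{n\Phi_n}$ by $n(3+\sqrt5)^n$ and pass to the limit term by term. Each non-dominant contribution vanishes: the constant $-4$ in $IDF_n$ contributes $-4\big/\big(n(3+\sqrt5)^n\big)\to 0$; the terms carrying $(3-\sqrt5)^n$ contribute fixed multiples of $\big(\tfrac{3-\sqrt5}{3+\sqrt5}\big)^n$ or of $\tfrac1n\big(\tfrac{3-\sqrt5}{3+\sqrt5}\big)^n$, both tending to $0$; and the remaining pure $(3+\sqrt5)^n$ terms (in numerator and denominator alike) contribute multiples of $\tfrac1n\to 0$. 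Hence
\[
\lim_{n\to\infty}\frac{IDF_n}{n\Phi_n}=\frac{(13+3\sqrt5)/10}{(5+3\sqrt5)/10}=\frac{13+3\sqrt5}{5+3\sqrt5}.
\]

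Finally I would rationalise the denominator: multiplying numerator and denominator by $5-3\sqrt5$ and using $(5+3\sqrt5)(5-3\sqrt5)=25-45=-20$ together with $(13+3\sqrt5)(5-3\sqrt5)=65-45+(15-39)\sqrt5=20-24\sqrt5$ gives $\dfrac{20-24\sqrt5}{-20}=\dfrac{-5+6\sqrt5}{5}$, which is the asserted value.

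As for difficulty, there is essentially no obstacle: the argument is a routine extraction of leading asymptotics from explicit solutions of linear recurrences. The only points deserving a line of justification are that the two leading coefficients $13+3\sqrt5$ and $5+3\sqrt5$ are nonzero, so the $n(3+\sqrt5)^n$ terms genuinely dominate, and the growth ordering $1\prec(3+\sqrt5)^n\prec n(3+\sqrt5)^n$ holds while $(3-\sqrt5)^n=o\big((3+\sqrt5)^n\big)$; everything else is arithmetic.
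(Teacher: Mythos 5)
Your argument is correct and is exactly the route the paper intends: the corollary is stated as an immediate consequence of Theorems \ref{PM} and \ref{main111}, i.e.\ comparing the dominant $n(3+\sqrt{5})^n$ terms of $IDF_n$ and $n\Phi_n$, which is what you do, and your simplification of $\frac{13+3\sqrt{5}}{5+3\sqrt{5}}$ to $\frac{-5+6\sqrt{5}}{5}$ checks out. No gaps.
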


\section{Anti-forcing polynomials}

The \emph{anti-forcing polynomial}  \cite{Hwang, Zh2} of a graph $G$ is defined
as below:
\begin{equation}\label{eq101}
Af(G,x)=\sum_{M\in \mathcal{M}(G)} x^{af(G,M)}.
\end{equation}

Thus the following expression is immediate:

\begin{equation}\label{eq102}
\nonumber Af(G,x)=\sum_{i=af(G)}^{Af(G)} u(G,i)x^{i},
\end{equation}
where $u(G,i)$ is the number of perfect matchings
with the anti-forcing number $i$.

The anti-forcing polynomial can count the number of perfect matchings
with the same anti-forcing number,
that is, the distribution of all anti-forcing numbers is showed.
If $G$ is a null graph or a graph with a unique perfect matching,
then $Af(G,x)=1$.
Obviously,  $\Phi(G)=Af(G,1)$, and the sum of the anti-forcing
numbers of all perfect matchings is $\frac{d}{dx}Af(G,x)\big|_{x=1}$.

In the following, a recurrence formula of anti-forcing
polynomial of $G_n$ is showed.
First, some useful local lemmas on $H_n$ and $G_n$
are discussed.

\begin{lemma}\label{Huv}{\bf .}
Let $n$ be a positive integer, and $0\leq k\leq n-1$,
and let $\mathcal{M}_{2k+1}(H_n)=\{M\in\mathcal{M}(H_n)|u_{2k+1}v_{2k+1}\in M$,
and $u_{j}v_{j}\not\in M$ for $j<2k+1\}$.
Then
\begin{equation*}
\sum_{M\in\mathcal{M}_{2k+1}(H_n)}x^{af(H_n,M)}=(x^3+3x^2)^kx^2Af(G_{n-1-k},x).
\end{equation*}
\end{lemma}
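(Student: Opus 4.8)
The plan is to prove the identity by induction on $k$, stripping off the two leftmost columns of $H_n$ at each step. Anti-forcing numbers will be handled through $af(G,M)=c'(M)$ (Theorem~\ref{minimax2}); for the structural part I will use that $af(G,M)$ is realized by some non-crossing compatible $M$-alternating set (Lemma~\ref{minimax9}), and that a maximal such set whose $I(\cdot)$ is as small as possible consists of the $M$-alternating squares together with the peripheries of the chains $L_r$ and of the pieces $W_r$ (Lemma~\ref{minir} and Remark~1, applied both to $H_n$ and to $H_{n-1}$).

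For the base case $k=0$ we have $\mathcal{M}_1(H_n)=\{M\in\mathcal{M}(H_n):u_1v_1\in M\}$. Exactly as in the proof of Lemma~\ref{pm}, $u_1v_1\in M$ forces $w_1w_2,z_1z_2\in M$, so the restriction $M^\ast$ of $M$ to $G_{n-1}=H_n\ominus\{u_1,v_1,w_1,w_2,z_1,z_2\}$ is a perfect matching of $G_{n-1}$, and $M\mapsto M^\ast$ is a bijection of $\mathcal{M}_1(H_n)$ onto $\mathcal{M}(G_{n-1})$. I will show $af(H_n,M)=af(G_{n-1},M^\ast)+2$ for every such $M$; summing $x^{af(H_n,M)}$ over $\mathcal{M}_1(H_n)$ then produces $x^2Af(G_{n-1},x)$, which is the claimed formula for $k=0$. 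For ``$\le$'' one starts from a non-crossing compatible $M^\ast$-alternating set of $G_{n-1}$ of size $af(G_{n-1},M^\ast)$ and adjoins two more $M$-alternating cycles associated with the deleted columns -- when $u_2v_2\in M$ these are the periphery of the vertical $L_3$ carried by columns $1$ and $2$ together with the square $s_{1,3}$, and when $u_2v_2\notin M$ they are the peripheries of a matching pair $W_r$, $L_{2r-1}$ sitting at the left of $H_n$, with $r$ determined by $M^\ast$ near the left end of $G_{n-1}$ -- and checks that the enlarged family is still non-crossing and compatible. For ``$\ge$'' one starts from a non-crossing compatible $M$-alternating set $\mathcal{A}'$ of $H_n$ with $|\mathcal{A}'|=af(H_n,M)$ and $I(\mathcal{A}')$ minimal, classifies via Lemma~\ref{minir} which of its members can reach the deleted columns (a non-square member carries exactly two edges of the form $u_iv_i$, so it reaches those columns only as one of the $L_r/W_r$ hugging the left of $H_n$), and deletes them; what remains is a non-crossing compatible $M^\ast$-alternating set of $G_{n-1}$, and exactly two cycles were removed.

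The inductive step $k\ge1$ is more straightforward. Now $u_1v_1\notin M$, which as in the proof of Lemma~\ref{pm} forces $u_2v_2\notin M$ and puts $M$ into one of the four configurations A, B, C, D on columns $1$ and $2$, each of which matches those two columns internally; hence the restriction $M'$ of $M$ to $H_{n-1}=H_n\ominus\{w_1,u_1,v_1,z_1,w_2,u_2,v_2,z_2\}$ lies in $\mathcal{M}_{2k-1}(H_{n-1})$, and for each fixed configuration $M\mapsto M'$ is a bijection onto $\mathcal{M}_{2k-1}(H_{n-1})$. Since $u_1v_1,u_2v_2\notin M$, Lemma~\ref{minir} prevents any non-square member of a minimal non-crossing compatible $M$-alternating set from using an edge of columns $1$ or $2$; such a set is therefore the disjoint union of the $M$-alternating squares lying in those two columns and a minimal non-crossing compatible $M'$-alternating set of $H_{n-1}$. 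A direct check shows that columns $1$ and $2$ carry the three $M$-alternating squares $s_{1,2},s_{1,3},s_{1,4}$ in configuration A and only the two squares $s_{1,2},s_{1,4}$ in each of B, C, D. Consequently
\[
\sum_{M\in\mathcal{M}_{2k+1}(H_n)}x^{af(H_n,M)}=(x^3+3x^2)\sum_{M'\in\mathcal{M}_{2k-1}(H_{n-1})}x^{af(H_{n-1},M')},
\]
and the induction hypothesis for $H_{n-1}$ with parameter $k-1$ rewrites the right-hand factor as $(x^3+3x^2)^{k-1}x^2Af(G_{n-1-k},x)$, completing the step.

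The main obstacle is the base-case identity $af(H_n,M)=af(G_{n-1},M^\ast)+2$: in contrast to the step $k\ge1$, the presence of $u_1v_1\in M$ lets $M$-alternating cycles hug -- and run well past -- the deleted columns, so $\mathcal{A}'$ cannot simply be cut into a left and a right part. The correct bookkeeping is to show that a minimal non-crossing compatible alternating set gains or loses exactly two cycles, forming a matching $W_r/L_{2r-1}$ pair, on passing between $H_n$ and $G_{n-1}$; this is where the extra requirement that $I(\mathcal{A}')$ be minimal, and the fact that every vertex of $G_n$ (hence of $H_n$) lies on the periphery, are used, just as in Lemmas~\ref{mrset} and~\ref{minir}.
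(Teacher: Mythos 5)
Your proposal is correct and rests on exactly the same ingredients as the paper's proof: Theorem \ref{minimax2} and Lemma \ref{minimax9}, the structure of minimal-$I$ maximum non-crossing compatible sets from Lemma \ref{minir} and Remark 1, the $3$-versus-$2$ count of alternating squares per block giving the factor $x^3+3x^2$, and the $W_r/L_{2r-1}$ pair of peripheries producing the $x^2\,Af(G_{n-1-k},x)$ term. The only difference is organizational -- you peel one block at a time by induction on $k$, whereas the paper splits $H_n$ once into $H_k$ and $H_{n-k}$ and computes the left factor in closed form as $\sum_{r}\binom{k}{r}3^{k-r}x^{2(k-r)}x^{3r}=(x^3+3x^2)^k$ -- so this is essentially the paper's argument.
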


\begin{proof}
Up to isomorphism,
$H_n$ can be split into two subsystems $H_k$ ($H_0$ is an empty graph) on the left and $H_{n-k}$ on the right
by deleting edges $u_{2k}u_{2k+1}$ and $v_{2k}v_{2k+1}$ from $H_n$ (see Fig. \ref{SQ2}).
Let $M\in \mathcal{M}_{2k+1}(H_n)$.
Then the restrictions $M^\prime$ and $M^{\prime\prime}$ of $M$ on
$H_k$ and $H_{n-k}$ are perfect matchings of $H_k$ and $H_{n-k}$, respectively.
Let $\mathcal{A}^\prime$ and $\mathcal{A}^{\prime\prime}$
be maximum non-crossing compatible $M^\prime$-alternating set and
$M^{\prime\prime}$-alternating set of $H_k$ and $H_{n-k}$
with $I(\mathcal{A}^\prime)$ and $I(\mathcal{A}^{\prime\prime})$ as small as possible, respectively.
Note that vertical edges $u_{j}v_{j}\not\in M$ for $j\leq 2k$,
by Remark 1, $\mathcal{A}=\mathcal{A}^\prime\cup\mathcal{A}^{\prime\prime}$
is a maximum non-crossing compatible $M$-alternating set of $H_n$.
According to Lemma \ref{minimax9},
\begin{equation}\label{afhn}
af(H_n,M)=|\mathcal{A}^\prime|+|\mathcal{A}^{\prime\prime}|=af(H_k,M^{\prime})+af(H_{n-k},M^{\prime\prime}).
\end{equation}

For $af(H_k,M^{\prime})$,
let $N_i$ be the substructure consisting of three squares
$s_{i,2}$, $s_{i,3}$ and $s_{i,4}$ $(i=1,2,\ldots,k)$.
Then the restriction of $M^\prime$ on $N_i$ is a perfect matching of $N_i$.
Recall that $u_{2i-1}v_{2i-1},u_{2i}v_{2i}\not\in M^\prime$,
there are two cases to be considered.
If $u_{2i-1}u_{2i},v_{2i-1}v_{2i}\in M^\prime$,
then $s_{i,2}$, $s_{i,3}$ and $s_{i,4}$ all are $M^\prime$-alternating squares.
By Remark 1,  $s_{i,2}$, $s_{i,3}$ and $s_{i,4}$  all belong to $\mathcal{A}^\prime$.
So the substructure $N_i$ contributes 3 to $af(H_k,M^{\prime})$.
If one of $u_{2i-1}u_{2i}$ and $v_{2i-1}v_{2i}$ is not in $M^\prime$,
then there are three possible cases such that only $s_{i,2}$ and $s_{i,4}$
are $M^\prime$-alternating squares.
Hence $N_i$ contributes 2 to $af(H_k,M^{\prime})$.
Suppose there are $r (0\leq r\leq k)$ substructures
$N_i$ contributing 3 to $af(H_k,M^{\prime})$,
then $af(H_k,M^{\prime})=3r+2(k-r)$.
Let $\mathcal{M}^{\prime}(H_k)=\{M^\prime\in\mathcal{M}(H_k)|u_iv_i\not\in M^{\prime},i=1,2,\ldots,2k\}$.
Then
\begin{equation}\label{afhn1}
\sum_{M^\prime\in\mathcal{M}^\prime(H_k)}x^{af(H_k,M^\prime)}
=\sum_{r=0}^{k}{k\choose r}3^{k-r}x^{2(k-r)}x^{3r}=(x^3+3x^2)^k.
\end{equation}

For $af(H_{n-k},M^{\prime\prime})$,
we recall that $u_{2k+1}v_{2k+1}\in M^{\prime\prime}$,
so there must be a vertical edge $u_{2m+2}v_{2m+2}\in M^{\prime\prime}(k\leq m\leq n-1)$ such that every vertical edge $u_{j}v_{j}(2k+1<j<2m+2)$
is not in $M^{\prime\prime}$.
As a consequence, the fragment of $H_{n-k}$ between the two vertical
edges $u_{2k+1}v_{2k+1}$ and $u_{2m+2}v_{2m+2}$ forms
the substructure $W_{m-k+1}$,
which also contains the substructure $L_{2(m-k)+1}$ (see Fig. \ref{LH}).
Note that $H_{n-k}\ominus\{u_{2k+1},v_{2k+1},w_{2k+1},w_{2k+2},z_{2k+1},z_{2k+2}\}$
is isomorphic to $G_{n-k-1}$,
and the restriction $M^{\prime\prime\prime}$ of $M^{\prime\prime}$ on $G_{n-k-1}$
is a perfect matching of $G_{n-k-1}$.
Let $\mathcal{A}^{\prime\prime\prime}$ be a maximum non-crossing
compatible $M^{\prime\prime\prime}$-alternating set of $G_{n-k-1}$
with $I(\mathcal{A}^{\prime\prime\prime})$ as small as possible,
and let $P(W_{m-k+1})$ and $P(L_{2(m-k)+1})$
be peripheries of $W_{m-k+1}$ and $L_{2(m-k)+1}$, respectively.
By Remark 1,
$\mathcal{A}^{\prime\prime\prime}\cup\{P(W_{m-k+1}),P(L_{2(m-k)+1})\}$
is a maximum non-crossing compatible $M^{\prime\prime}$-alternating set of
$H_{n-k}$.
By Lemma \ref{minimax9},
$af(H_{n-k},M^{\prime\prime})=2+|\mathcal{A}^{\prime\prime\prime}|=2+af(G_{n-k-1},M^{\prime\prime\prime})$. Let $\mathcal{M}^{\prime\prime}(H_{n-k})=\{M^{\prime\prime}\in \mathcal{M}(H_{n-k})| u_{2k+1}$
$v_{2k+1}\in M^{\prime\prime} \}$.
Then
\begin{eqnarray}\label{afhn2}
\sum_{M^{\prime\prime}\in\mathcal{M}^{\prime\prime}(H_{n-k})}x^{af(H_{n-k},M^{\prime\prime})}
=\sum_{M^{\prime\prime\prime}\in\mathcal{M}(G_{n-k-1})}x^{2+af(G_{n-k-1},M^{\prime\prime\prime})}=x^2Af(G_{n-k-1},x).
\end{eqnarray}
By Eqs. (\ref{afhn}), (\ref{afhn1}) and (\ref{afhn2}),
\begin{eqnarray*}
\sum_{M\in\mathcal{M}_{2k+1}(H_n)}x^{af(H_n,M)}&=&
\sum_{M^\prime\in\mathcal{M}^\prime(H_k),
M^{\prime\prime}\in\mathcal{M}^{\prime\prime}(H_{n-k})}x^{af(H_k,M^\prime)+af(H_{n-k},M^{\prime\prime})}\\
&=&\sum_{M^\prime\in\mathcal{M}^\prime(H_k)}x^{af(H_k,M^\prime)}
\sum_{M^{\prime\prime}\in\mathcal{M}^{\prime\prime}(H_{n-k})}x^{af(H_{n-k},M^{\prime\prime})}\\
&=&(x^3+3x^2)^kx^2Af(G_{n-k-1},x).
\end{eqnarray*}
\end{proof}

The following lemma is the case  of Eq. (\ref{afhn1}) for $k=n$.
\begin{lemma}\label{afhn0}{\bf .}
Let $\mathcal{M}_0(H_n)=\{M\in \mathcal{M}(H_n)|u_jv_j\not\in M$ for $j=1,2,\ldots,2n\}$.
Then
\begin{equation*}
\sum_{M\in \mathcal{M}_0(H_n)}x^{af(H_n,M)}=(x^3+3x^2)^n.
\end{equation*}
\end{lemma}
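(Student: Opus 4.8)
The plan is to recycle the local counting that already appeared \emph{inside} the proof of Lemma~\ref{Huv}. The key observation is that $\mathcal{M}_0(H_n)$ is exactly the set denoted $\mathcal{M}^\prime(H_k)$ in that proof, specialized to $k=n$: both are the perfect matchings of $H_n$ avoiding every vertical edge $u_jv_j$, $1\le j\le 2n$. Consequently the assertion is nothing but Eq.~(\ref{afhn1}) with $k$ replaced by $n$, and I would present it as a short consequence of the block analysis carried out there, restated so that it is self-contained for $H_n$ rather than for a sub-fragment.

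Concretely, for $M\in\mathcal{M}_0(H_n)$ I would first decompose $H_n$ into the $n$ vertical substructures $N_i$, each consisting of the three squares $s_{i,2},s_{i,3},s_{i,4}$, and note that since $u_{2i-1}v_{2i-1},u_{2i}v_{2i}\notin M$ the restriction of $M$ to $N_i$ takes one of two shapes: either $u_{2i-1}u_{2i},v_{2i-1}v_{2i}\in M$, in which case all three squares of $N_i$ are $M$-alternating (one configuration), or exactly one of these two horizontal edges lies in $M$, in which case only $s_{i,2}$ and $s_{i,4}$ are $M$-alternating (three configurations). The next step is to invoke Lemma~\ref{minir} together with Remark~1: because $M$ contains \emph{no} vertical edge $u_jv_j$, a maximal non-crossing compatible $M$-alternating set $\mathcal{A}$ with $I(\mathcal{A})$ minimal can contain no non-square cycle at all (any such cycle would carry two vertical edges $u_iv_i,u_jv_j\in M$). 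Hence $\mathcal{A}$ is precisely the family of all $M$-alternating squares, and by Lemma~\ref{minimax9} we get $af(H_n,M)=3r+2(n-r)$, where $r$ is the number of indices $i$ for which $N_i$ is of the first type.

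Finally I would sum over $\mathcal{M}_0(H_n)$ by choosing the $r$ first-type blocks in $\binom{n}{r}$ ways and the $3^{\,n-r}$ internal configurations of the remaining blocks, obtaining
\begin{equation*}
\sum_{M\in\mathcal{M}_0(H_n)}x^{af(H_n,M)}=\sum_{r=0}^{n}\binom{n}{r}3^{\,n-r}x^{2(n-r)}x^{3r}=(x^3+3x^2)^n
\end{equation*}
by the binomial theorem. I do not anticipate a genuine obstacle: the only point that requires care is the claim that no non-square $M$-alternating cycle can enter a minimal maximal non-crossing compatible set, and this is exactly where the defining hypothesis $u_jv_j\notin M$ for all $j$ is used; it is already supplied by Lemma~\ref{minir}. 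Everything else is the same bookkeeping as in the derivation of Eq.~(\ref{afhn1}).
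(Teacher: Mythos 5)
Your proposal is correct and follows essentially the paper's own route: the paper proves this lemma simply by observing it is the $k=n$ case of Eq.~(\ref{afhn1}) from the proof of Lemma~\ref{Huv}, which is exactly the identification you make, and your block analysis of the $N_i$'s together with Lemma~\ref{minir}, Remark~1 and Lemma~\ref{minimax9} is the same bookkeeping used there. One small slip in wording: the three configurations of a block $N_i$ contributing $2$ are those in which \emph{at least} one of $u_{2i-1}u_{2i}$, $v_{2i-1}v_{2i}$ is absent from $M$ (including the case where both are absent and the four outer vertical edges $w_{2i-1}u_{2i-1}$, $w_{2i}u_{2i}$, $v_{2i-1}z_{2i-1}$, $v_{2i}z_{2i}$ are used), not ``exactly one''; your count of three per block, and hence the sum $\sum_{r=0}^{n}\binom{n}{r}3^{n-r}x^{2(n-r)}x^{3r}=(x^3+3x^2)^n$, is nevertheless correct.
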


\begin{lemma}\label{Guv}{\bf .}
Let $n$ be a positive integer, and $0\leq k\leq n-1$,
$\mathcal{M}_{2k+1}^{u_0v_0}(G_n)=\{M\in\mathcal{M}(G_n)|u_0v_0, u_{2k+1}v_{2k+1}\in M,$ and $u_{j}v_{j}\not\in M$ for $j=1,2,\ldots,2k\}$.
Then
\begin{eqnarray*}
\sum_{M\in\mathcal{M}_{2k+1}^{u_0v_0}(G_n)}x^{af(G_n,M)}
=((x^3+3x^2)^k+(x-1)x^{3k})x^2Af(G_{n-k-1},x).
\end{eqnarray*}
\end{lemma}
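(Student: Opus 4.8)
The plan is to mimic the proof of Lemma~\ref{Huv}, but now tracking the extra constraint $u_0v_0\in M$, and to isolate where the term $(x-1)x^{3k}$ comes from. First I would use the edge $u_0v_0\in M$ to note that the restriction of $M$ to $G_n\ominus\{u_0,v_0\}=H_n$ is a perfect matching of $H_n$ that lies in $\mathcal{M}_{2k+1}(H_n)$, so that naively one would expect the answer to agree with Lemma~\ref{Huv}, namely $(x^3+3x^2)^kx^2Af(G_{n-1-k},x)$. The point is that deleting $u_0,v_0$ can change the anti-forcing number: a compatible $M$-alternating set of $H_n$ need not extend to one of $G_n$, because in $G_n$ the square $s_{1,1}$ (which uses $u_0,v_0$) may interact with alternating cycles living in the left block $H_k$. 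So I would split $G_n$ along the edges $u_{2k}u_{2k+1},v_{2k}v_{2k+1}$ into a left piece $G_k'$ (the subgraph of $G_n$ induced by columns $0$ through $2k$, which up to isomorphism is $G_k\ominus$ nothing, i.e.\ contains the leftmost square and the three-square strips $N_1,\dots,N_k$) and a right piece isomorphic to $H_{n-k}$, and by Remark~1 / Lemma~\ref{minimax9} write
\begin{equation*}
af(G_n,M)=af(\text{left piece},M')+af(H_{n-k},M'').
\end{equation*}
The right-hand contribution is handled exactly as Eq.~(\ref{afhn2}): it gives $x^2Af(G_{n-k-1},x)$.

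The main work, and the main obstacle, is computing the generating function of $af(\text{left piece},M')$ over all admissible $M'$, i.e.\ the analogue of Eq.~(\ref{afhn1}) but for the graph that still carries the leftmost square $s_{1,1}$ and has $u_0v_0\in M'$. Here each of the $k$ three-square strips $N_i$ still contributes either $3$ (when $u_{2i-1}u_{2i},v_{2i-1}v_{2i}\in M'$, the ``aligned'' case) or $2$ (the three ``misaligned'' cases), just as before. The new feature is the leftmost square $s_{1,1}$: since $u_0v_0\in M'$ and $u_1v_1\notin M'$, whether $s_{1,1}$ is $M'$-alternating, and whether its periphery can be combined with longer alternating cycles threading through the $N_i$'s, depends on the configuration in the first strip(s). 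I expect the clean statement to be: if at least one $N_i$ is misaligned then $s_{1,1}$ contributes nothing new and the left generating function is $(x^3+3x^2)^k$ as in Lemma~\ref{Huv}; but in the single configuration where \emph{all} $k$ strips are aligned, the long chain $L_{2k+1}$ running from column $0$ to column $2k+1$ becomes $M'$-alternating, its periphery is compatible with the $k$ interior squares but is non-crossing-incompatible with the square periphery of $s_{1,1}$ in a way that raises the count by exactly one. That all-aligned configuration has weight $x^{3k}$ from the $k$ squares, and the extra alternating cycle bumps it to $x^{3k+1}$; the difference between the ``correct'' contribution $x^{3k+1}$ and the Lemma~\ref{Huv} value $x^{3k}$ for that one configuration is exactly $(x-1)x^{3k}$. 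Adding this correction term to $(x^3+3x^2)^k$ and multiplying by the right-block factor $x^2Af(G_{n-k-1},x)$ yields
\begin{equation*}
\sum_{M\in\mathcal{M}_{2k+1}^{u_0v_0}(G_n)}x^{af(G_n,M)}=\bigl((x^3+3x^2)^k+(x-1)x^{3k}\bigr)x^2Af(G_{n-k-1},x),
\end{equation*}
as claimed.

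Concretely I would carry out the steps in this order: (1)~delete $u_0v_0$ and reduce to counting on $H_n\cap\mathcal{M}_{2k+1}(H_n)$, keeping a bijection with the original matchings; (2)~split along $u_{2k}u_{2k+1},v_{2k}v_{2k+1}$ and invoke Lemma~\ref{minimax9} to get additivity of $af$; (3)~on the right block reproduce Eq.~(\ref{afhn2}) verbatim to get $x^2Af(G_{n-k-1},x)$; (4)~on the left block, set up the case analysis over the $3$-or-$2$ contributions of each $N_i$ and, using Lemma~\ref{minir} for the left block, determine exactly which non-square cycles (peripheries of $L_k$'s or $W_r$'s) can appear and how they interact with $s_{1,1}$; (5)~observe that the only configuration producing an $af$-value different from the Lemma~\ref{Huv} bookkeeping is the all-aligned one, giving the correction $(x-1)x^{3k}$; (6)~multiply the two generating functions. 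The delicate point in step~(4) is verifying that when at least one $N_i$ is misaligned the square $s_{1,1}$ is forced into every maximal non-crossing compatible set without creating any new long alternating cycle, so that no correction arises there — this requires a careful look at which vertical edges $u_iv_i$ lie in an alternating cycle, exactly in the spirit of the parity argument in the proof of Lemma~\ref{minir}.
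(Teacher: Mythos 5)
Your proposal is in substance the paper's own argument: delete $u_0v_0$, observe $M\setminus\{u_0v_0\}\in\mathcal{M}_{2k+1}(H_n)$, keep the Lemma~\ref{Huv} bookkeeping, and note that the only deviation is the single all-aligned configuration (all $u_{2i-1}u_{2i},v_{2i-1}v_{2i}\in M$, $i=1,\dots,k$), where the periphery $P(L_{2k+1})$ through $u_0v_0$ and $u_{2k+1}v_{2k+1}$ becomes one additional compatible $M$-alternating cycle, turning $x^{3k}$ into $x^{3k+1}$ and producing exactly the correction $(x-1)x^{3k}$; this is precisely how the paper obtains $\bigl(\sum_{r=0}^{k-1}\binom{k}{r}3^{k-r}x^{2(k-r)+3r}+x^{3k+1}\bigr)x^2Af(G_{n-k-1},x)$. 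Two local statements in your write-up should be repaired, though neither changes the outcome. First, your step (2) — strict additivity $af(G_n,M)=af(\text{left piece},M')+af(H_{n-k},M'')$ after cutting $u_{2k}u_{2k+1},v_{2k}v_{2k+1}$ — is not literally true, because in the all-aligned case the cycle $P(L_{2k+1})$ uses those very cut edges; additivity across that cut holds in $H_n$ (no vertical $M$-edge on the left, so no alternating cycle crosses, which is Eq.~(\ref{afhn})), and the paper therefore compares $af(G_n,M)$ with $af(H_n,M')$ and reuses Lemma~\ref{Huv} rather than splitting $G_n$ itself. Second, for $k\geq1$ the square $s_{1,1}$ is not $M$-alternating in the all-aligned case (it contains only the single $M$-edge $u_0v_0$, since $u_1v_1\notin M$), so the extra unit does not arise from any incompatibility with the periphery of $s_{1,1}$; it is simply the addition of the new non-square cycle $P(L_{2k+1})$, which is compatible with all $3k$ alternating squares of the aligned strips (for $k=0$ the cycle degenerates to $s_{1,1}=L_1$ itself). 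With these corrections your steps (1)--(6) reproduce the paper's proof.
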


\begin{proof}
Let $M\in \mathcal{M}_{2k+1}^{u_0v_0}(G_n)$.
Then $M^\prime=M\setminus\{u_0v_0\}$ is a perfect matching of $H_n=G_n\ominus\{u_0,v_0\}$,
and $M^\prime$ belongs to $\mathcal{M}_{2k+1}(H_n)$ (defined in Lemma \ref{Huv}).
Let $\mathcal{A}^\prime$ be a maximum non-crossing compatible
$M^\prime$-alternating set of $H_n$ with $I(\mathcal{A}^\prime)$ as small as possible.
There are two cases to be considered.
If $u_{2i+1}u_{2i+2}$ and $v_{2i+1}v_{2i+2}$ belong to $M$ for all
$i=0,1,\ldots,k-1$,
then there is a substructure $L_{2k+1}$ ($L_1$ is the square $s_{1,1}$) between the vertical
edges $u_0v_0$ and $u_{2k+1}v_{2k+1}$, whose periphery $P(L_{2k+1})$
is an $M$-alternating cycle (see Fig. \ref{SQ1}).
By Lemma \ref{minir} and Remark 1,
$\mathcal{A}^\prime\cup\{P(L_{2k+1})\}$ is a maximum compatible
$M$-alternating set of $G_n$.
By Lemma \ref{minimax9}, $af(G_n,M)=1+|\mathcal{A}^\prime|=1+af(H_n,M^\prime)$.
Suppose there exists a $j (0\leq j\leq k-1)$ such that  $u_{2j+1}u_{2j+2}$ or
$v_{2j+1}v_{2j+2}$ does not belong to $M$,
then $\mathcal{A}^\prime$ can be a maximum compatible
$M$-alternating set of $G_n$,
so $af(G_n,M)=af(H_n,M^\prime)$.
Analogously,
\begin{eqnarray*}
\sum_{M\in\mathcal{M}_{2k+1}^{u_0v_0}(G_n)}x^{af(G_n,M)}
&=&(\sum_{r=0}^{k-1}{k\choose r}3^{k-r}x^{2(k-r)}x^{3r}+x^{3k+1})x^2Af(G_{n-k-1},x)\\
&=&((x^3+3x^2)^k+(x-1)x^{3k})x^2Af(G_{n-k-1},x).
\end{eqnarray*}
\end{proof}

\begin{lemma}\label{Guv1}{\bf .}
Let $\mathcal{M}_0^{u_0v_0}(G_n)=\{M\in \mathcal{M}(G_n)|u_0v_0\in M, u_jv_j\not\in M$ for $j=1,2,\ldots,2n\}$.
Then
\begin{equation*}
\sum_{M\in \mathcal{M}_0^{u_0v_0}(G_n)}x^{af(G_n,M)}=(x^3+3x^2)^n.
\end{equation*}
\end{lemma}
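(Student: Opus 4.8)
The plan is to strip off the forced edge $u_0v_0$, thereby reducing $G_n$ to $H_n$, and then quote Lemma~\ref{afhn0}. First I would note that $M\mapsto M':=M\setminus\{u_0v_0\}$ is a bijection from $\mathcal{M}_0^{u_0v_0}(G_n)$ onto $\mathcal{M}_0(H_n)$: since $u_0v_0\in M$, the set $M'$ saturates exactly the vertices of $H_n=G_n\ominus\{u_0,v_0\}$, and the condition $u_jv_j\notin M$ for $1\le j\le 2n$ is precisely the defining condition of $\mathcal{M}_0(H_n)$; the inverse map is $M'\mapsto M'\cup\{u_0v_0\}$.

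The heart of the argument is to show $af(G_n,M)=af(H_n,M')$ for every $M\in\mathcal{M}_0^{u_0v_0}(G_n)$. Let $\mathcal{A}'$ be a maximum non-crossing compatible $M$-alternating set of $G_n$ with $I(\mathcal{A}')$ as small as possible, so Lemma~\ref{minir} applies: every element of $\mathcal{A}'$ is either an $M$-alternating square or the periphery of an $L_k$ or a $W_r$, and for any such non-square cycle $C$ the set $M\cap E(C)$ consists of exactly two matched vertical edges $u_iv_i,u_jv_j$ with $0\le i<j\le 2n$. But $u_0v_0$ is the only edge of the form $u_\ell v_\ell$ lying in $M$, so no such $C$ can exist; hence $\mathcal{A}'$ contains only $M$-alternating squares. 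Moreover $s_{1,1}$ is not $M$-alternating, because $u_0v_0\in M$ forces $u_0u_1,v_0v_1\notin M$ while $u_1v_1\notin M$, so only one edge of $s_{1,1}$ lies in $M$. Therefore $\mathcal{A}'$ is contained in the set of all $M'$-alternating squares of $H_n$; since the latter set is itself non-crossing, compatible and $M$-alternating (distinct $M$-alternating squares are automatically compatible and non-crossing) and $\mathcal{A}'$ is maximum, $\mathcal{A}'$ equals it. By the same reasoning inside $H_n$ (via Remark~1), that set is also a maximum non-crossing compatible $M'$-alternating set of $H_n$, so Lemma~\ref{minimax9} gives $af(G_n,M)=|\mathcal{A}'|=af(H_n,M')$.

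Combining the two steps,
\[
\sum_{M\in\mathcal{M}_0^{u_0v_0}(G_n)}x^{af(G_n,M)}=\sum_{M'\in\mathcal{M}_0(H_n)}x^{af(H_n,M')}=(x^3+3x^2)^n
\]
by Lemma~\ref{afhn0}. The only delicate point is the claim that column $0$ contributes no new alternating square or cycle; this is exactly where Lemma~\ref{minir} does the work, and it is in fact easier here than in Lemma~\ref{Guv} because there is no matched rung to the right of $u_0v_0$ with which an $L_k$-periphery could be completed — which is also why the present formula carries no analogue of the $(x-1)x^{3k}$ correction term appearing in Lemma~\ref{Guv}.
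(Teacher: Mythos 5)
Your proposal is correct and follows essentially the same route as the paper: the bijection $M\mapsto M\setminus\{u_0v_0\}$ onto $\mathcal{M}_0(H_n)$, the identity $af(G_n,M)=af(H_n,M')$ obtained from Lemma~\ref{minir} (via Remark~1) together with Lemma~\ref{minimax9}, and then Lemma~\ref{afhn0}. The only difference is cosmetic — you argue from a maximum non-crossing compatible set in $G_n$ and show it consists solely of the $M'$-alternating squares, whereas the paper starts from such a set in $H_n$ and observes it remains maximum in $G_n$ — and your extra detail (no non-square $L_k$/$W_r$ periphery can occur since only one rung $u_iv_i$ lies in $M$, and $s_{1,1}$ is not $M$-alternating) just makes the paper's terse step explicit.
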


\begin{proof}
Let $M\in\mathcal{M}_0^{u_0v_0}(G_n)$.
Then $M^\prime=M\setminus\{u_0v_0\}\in \mathcal{M}_0(H_n)$.
On the other hand,
if $M^\prime\in \mathcal{M}_0(H_n)$,
then $M=M^\prime\cup\{u_0v_0\}\in \mathcal{M}_0^{u_0v_0}(G_n)$.
So $|\mathcal{M}_0^{u_0v_0}(G_n)|=|\mathcal{M}_0(H_n)|$.
Let $\mathcal{A}^\prime$ be a maximum non-crossing compatible
$M^\prime$-alternating set of $H_n$ with $I(\mathcal{A}^\prime)$ as small as possible.
By Lemma \ref{minir} and Remark 1, $\mathcal{A}^\prime$ also is a maximum
compatible $M$-alternating set of $G_n$.
By Lemma \ref{minimax9},
$af(G_n,M)=af(H_n,M^\prime)$.
According to Lemma \ref{afhn0},
\begin{equation*}
\sum_{M\in \mathcal{M}_0^{u_0v_0}(G_n)}x^{af(G_n,M)}=\sum_{M^\prime\in \mathcal{M}_0(H_n)}x^{af(H_n,M^\prime)} =(x^3+3x^2)^n.
\end{equation*}
\end{proof}

\begin{lemma}\label{Guv2}{\bf .}
Let $\mathcal{M}_{\bar{0}}(G_n)=\{M\in \mathcal{M}(G_n)|u_0v_0\not\in M\}$.
Then
\begin{equation*}
\sum_{M\in \mathcal{M}_{\bar{0}}(G_n)}x^{af(G_n,M)}=xAf(G_{n-1},x).
\end{equation*}
\end{lemma}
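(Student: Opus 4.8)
The plan is to analyze a perfect matching $M$ of $G_n$ with $u_0v_0\notin M$ by looking at the forced edges near the leftmost square $s_{1,1}$ and then reducing to $G_{n-1}$. Since $u_0$ has degree two in $G_n$ (its neighbours being $v_0$ and $u_1$), the condition $u_0v_0\notin M$ forces $u_0u_1\in M$; symmetrically $v_0v_1\in M$. Then $w_1$ and $z_1$ each have only one available neighbour left, so $w_1w_2,z_1z_2\in M$ as well. Consequently the four vertices $u_1,v_1,w_2,z_2$ are matched outward (to $u_0,v_0,w_1,z_1$), and the restriction $M^\prime$ of $M$ to $G_{n-1}=G_n\ominus\{u_0,u_1,v_0,v_1,w_1,w_2,z_1,z_2\}$ is a perfect matching of $G_{n-1}$. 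Conversely, any perfect matching $M^\prime$ of $G_{n-1}$ extends uniquely to a matching $M=M^\prime\cup\{u_0u_1,v_0v_1,w_1w_2,z_1z_2\}\in\mathcal{M}_{\bar 0}(G_n)$, giving a bijection between $\mathcal{M}_{\bar 0}(G_n)$ and $\mathcal{M}(G_{n-1})$.

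Next I would compare anti-forcing numbers across this bijection. Let $\mathcal{A}^\prime$ be a maximum non-crossing compatible $M^\prime$-alternating set of $G_{n-1}$ with $I(\mathcal{A}^\prime)$ as small as possible. By Lemma \ref{minir}, every cycle of $\mathcal{A}^\prime$ is either an $M^\prime$-alternating square or the periphery of some $L_k$ or $W_r$; in particular no cycle of $\mathcal{A}^\prime$ meets the deleted columns, and each such cycle is also a compatible $M$-alternating cycle of $G_n$. On the other hand, the square $s_{1,1}$ is \emph{not} $M$-alternating (two of its edges, $u_0u_1$ and $v_0v_1$, lie in $M$ while the edge $u_0v_0\notin M$, so its four edges do not alternate), and more importantly the only $M$-alternating cycles of $G_n$ using vertices of the first two columns would have to use $w_1u_1$ or $v_1z_1$ as matching edges, which they are not. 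Hence, using Lemma \ref{minir} applied to $G_n$, a maximum non-crossing compatible $M$-alternating set of $G_n$ with smallest interior index contains no new cycle beyond those available in $G_{n-1}$, so $af(G_n,M)=af(G_{n-1},M^\prime)$ by Lemma \ref{minimax9}.

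Finally, summing over the bijection,
\begin{equation*}
\sum_{M\in\mathcal{M}_{\bar 0}(G_n)}x^{af(G_n,M)}=\sum_{M^\prime\in\mathcal{M}(G_{n-1})}x^{af(G_{n-1},M^\prime)}=xAf(G_{n-1},x)?
\end{equation*}
Wait — the bijection shows the sum equals $Af(G_{n-1},x)$, not $xAf(G_{n-1},x)$, unless one of the four forced edges contributes $1$ to the anti-forcing number. So the delicate point, and the main obstacle, is precisely the accounting at the left boundary: I expect that in fact $af(G_n,M)=1+af(G_{n-1},M^\prime)$ because the periphery of $L_1=s_{1,1}$ (equivalently, the cycle $u_0u_1w_1\cdots$ forced by $u_0v_0\notin M$) is a compatible $M$-alternating cycle that must be added, exactly as in the analogous boundary analysis in Lemma \ref{Guv}. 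The careful step is to verify via Lemma \ref{minir} (adapted to $G_n$) that this boundary cycle is forced into every optimal set and is the \emph{only} extra contribution, which then yields $af(G_n,M)=1+af(G_{n-1},M^\prime)$ and hence $\sum_{M\in\mathcal{M}_{\bar 0}(G_n)}x^{af(G_n,M)}=x\,Af(G_{n-1},x)$.
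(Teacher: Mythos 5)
Your reduction to $G_{n-1}$ and the bijection between $\mathcal{M}_{\bar 0}(G_n)$ and $\mathcal{M}(G_{n-1})$ match the paper, but the middle of your argument contains a genuine error that derails the accounting. You assert that $s_{1,1}$ is \emph{not} $M$-alternating because ``two of its edges, $u_0u_1$ and $v_0v_1$, lie in $M$.'' In fact $u_0u_1$ and $v_0v_1$ are \emph{opposite} (non-adjacent) edges of the square $u_0u_1v_1v_0$, and since $u_0v_0\notin M$ and $u_1v_1\notin M$ (as $u_1$ is matched to $u_0$), the four edges do alternate: $s_{1,1}$ \emph{is} an $M$-alternating square. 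This also falsifies your claim that every $M$-alternating cycle of $G_n$ meeting the first columns must use $w_1u_1$ or $v_1z_1$ as matching edges. Consequently your first conclusion $af(G_n,M)=af(G_{n-1},M^\prime)$ is wrong, which is exactly why your sum came out as $Af(G_{n-1},x)$ instead of $xAf(G_{n-1},x)$.

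You then notice the mismatch and pivot to asserting $af(G_n,M)=1+af(G_{n-1},M^\prime)$, but you do not prove it — you only say you ``expect'' the boundary cycle must be added and flag the verification as the remaining careful step. That step is the content of the lemma, and it is where the paper does its work: since $u_0v_0\notin M$ forces $u_0u_1,v_0v_1,w_1w_2,z_1z_2\in M$, the square $s_{1,1}$ is $M$-alternating and is vertex-disjoint from $G_{n-1}$; taking $\mathcal{A}^\prime$ a maximum non-crossing compatible $M^\prime$-alternating set of $G_{n-1}$ with $I(\mathcal{A}^\prime)$ minimal, Lemma \ref{minir} (every $M$-alternating square belongs to such an optimal set, and the structural description of non-square cycles rules out any cycle straddling the deleted columns) gives that $\mathcal{A}^\prime\cup\{s_{1,1}\}$ is a maximum compatible $M$-alternating set of $G_n$, whence $af(G_n,M)=1+af(G_{n-1},M^\prime)$ by Lemma \ref{minimax9}, and the sum is $xAf(G_{n-1},x)$. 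So your final formula is correct, but as written the proposal both contains a false intermediate claim and leaves the decisive step unestablished.
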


\begin{proof}
Let $M\in \mathcal{M}_{\bar{0}}(G_n)$.
Then $u_0u_1,v_0v_1,w_1w_2,z_1z_2\in M$,
$s_{1,1}$ is an $M$-alternating square,
and the restriction $M^\prime$ of $M$
on $G_{n-1}=G_{n}\ominus\{u_0,u_1,v_0,v_1,w_1,w_2,z_1,z_2\}$
is a perfect matching of $G_{n-1}$, see Fig. \ref{SQ1}.
Let $\mathcal{A}^\prime$ be a maximum non-crossing
compatible $M^\prime$-alternating set of $G_{n-1}$ with $I(\mathcal{A}^\prime)$ as small as possible.
By Lemma \ref{minir},
$\mathcal{A}^\prime\cup\{s_{1,1}\}$ is a maximum
compatible $M$-alternating set of $G_n$.
By Lemma \ref{minimax9},
$af(G_n,M)=1+af(G_{n-1},M^\prime)$.
Therefore
\begin{equation*}
\sum_{M\in \mathcal{M}_{\bar{0}}(G_n)}x^{af(G_n,M)}=
\sum_{M^\prime\in \mathcal{M}(G_{n-1})}x^{1+af(G_{n-1},M^\prime)}=xAf(G_{n-1},x).
\end{equation*}
\end{proof}

\begin{lemma}\label{hnr}{\bf .}
Let $n$ be a positive integer.
Then
\begin{equation*}
Af(H_n,x)=x^2Af(G_{n-1},x)+(x^3+3x^2)Af(H_{n-1},x).
\end{equation*}
\end{lemma}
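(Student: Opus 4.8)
The plan is to partition $\mathcal{M}(H_n)$ according to the position of the first vertical edge of the form $u_jv_j$ that lies in the matching $M$, reading from the leftmost square. Concretely, for $M\in\mathcal{M}(H_n)$ consider the vertical edges $u_1v_1,u_2v_2,\ldots,u_{2n}v_{2n}$. As observed in the proof of Lemma~\ref{minir}, such an edge can only occur at an odd index if it is to be the \emph{first} one (if $u_{2i}v_{2i}\in M$ with all earlier $u_jv_j\notin M$, then one checks, exactly as in the parity discussion there, that this situation does not arise on $H_n$ because the leftmost fragment would then fail to have a perfect matching). So either no vertical edge $u_jv_j$ lies in $M$ at all, or the first such edge is $u_{2k+1}v_{2k+1}$ for some $k$ with $0\le k\le n-1$. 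This yields the disjoint decomposition
\begin{equation*}
\mathcal{M}(H_n)=\mathcal{M}_0(H_n)\;\cup\;\bigcup_{k=0}^{n-1}\mathcal{M}_{2k+1}(H_n),
\end{equation*}
where $\mathcal{M}_0(H_n)$ and $\mathcal{M}_{2k+1}(H_n)$ are the sets defined in Lemmas~\ref{afhn0} and~\ref{Huv} respectively.

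Summing the weight $x^{af(H_n,M)}$ over this partition and applying Lemma~\ref{Huv} to each term $\mathcal{M}_{2k+1}(H_n)$ and Lemma~\ref{afhn0} to $\mathcal{M}_0(H_n)$ gives
\begin{equation*}
Af(H_n,x)=(x^3+3x^2)^n+\sum_{k=0}^{n-1}(x^3+3x^2)^k x^2 Af(G_{n-1-k},x).
\end{equation*}
The next step is to turn this closed-form sum into the claimed two-term recurrence. Write $a:=x^3+3x^2$ and peel off the $k=0$ summand, which is $x^2 Af(G_{n-1},x)$; the remaining sum is $\sum_{k=1}^{n-1}a^k x^2 Af(G_{n-1-k},x)$, and re-indexing $k\mapsto k+1$ identifies it as $a$ times $\sum_{k=0}^{n-2}a^k x^2 Af(G_{n-2-k},x)$. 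Comparing with the same closed form written for $H_{n-1}$, namely $Af(H_{n-1},x)=a^{n-1}+\sum_{k=0}^{n-2}a^k x^2 Af(G_{n-2-k},x)$, shows that $\sum_{k=0}^{n-2}a^k x^2 Af(G_{n-2-k},x)=Af(H_{n-1},x)-a^{n-1}$. Substituting back,
\begin{equation*}
Af(H_n,x)=a^n+x^2Af(G_{n-1},x)+a\bigl(Af(H_{n-1},x)-a^{n-1}\bigr)=x^2Af(G_{n-1},x)+(x^3+3x^2)Af(H_{n-1},x),
\end{equation*}
since the $a^n$ terms cancel. This is exactly the asserted identity.

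The main obstacle I anticipate is justifying the first displayed decomposition rigorously — specifically, verifying that the parity restriction forces the first in-matching vertical edge to sit at an odd index, so that the union over $k\in\{0,\ldots,n-1\}$ of the $\mathcal{M}_{2k+1}(H_n)$ together with $\mathcal{M}_0(H_n)$ really exhausts $\mathcal{M}(H_n)$. This is a local structural fact about $H_n$ of the same flavor as the argument already carried out inside the proof of Lemma~\ref{minir} (where the positions of vertical matching edges on an $M$-alternating cycle were analyzed), and it can be handled by the same kind of case check; alternatively one can simply remark that the definitions of $\mathcal{M}_0(H_n)$ and $\mathcal{M}_{2k+1}(H_n)$ were tailored precisely so that these sets are pairwise disjoint and cover $\mathcal{M}(H_n)$, which is immediate once one knows no even-indexed vertical edge can be the first one. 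Everything after that is the routine generating-function-style bookkeeping sketched above, and should be presented compactly by introducing the abbreviation $a=x^3+3x^2$ and comparing the formulas for $Af(H_n,x)$ and $Af(H_{n-1},x)$.
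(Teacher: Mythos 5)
Your argument is correct, but it follows a genuinely different route from the paper's. The paper proves Lemma \ref{hnr} by the simple dichotomy $u_1v_1\in M$ versus $u_1v_1\notin M$: the first class is $\mathcal{M}_{1}(H_n)=\mathcal{M}_{2\cdot0+1}(H_n)$, handled by Lemma \ref{Huv} with $k=0$, and in the second class the leftmost block of squares $s_{1,2},s_{1,3},s_{1,4}$ contributes the factor $x^3+3x^2$ (one subcase with three $M$-alternating squares, three subcases with two), while the rest of $M$ restricts to a perfect matching of $H_{n-1}$, giving $(x^3+3x^2)Af(H_{n-1},x)$ via Lemma \ref{minir}, Remark 1 and Lemma \ref{minimax9}. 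You instead first assemble the closed form $Af(H_n,x)=(x^3+3x^2)^n+\sum_{k=0}^{n-1}(x^3+3x^2)^kx^2Af(G_{n-1-k},x)$ --- which is exactly Eq. (\ref{AFHx}), written down by the paper only later, inside the proof of Theorem \ref{mainA} --- and then peel off the $k=0$ term and telescope against the same closed form for $H_{n-1}$. This is not circular (Lemmas \ref{Huv} and \ref{afhn0} do not depend on Lemma \ref{hnr}), and your algebra is right; what it buys is that the recurrence becomes a purely formal consequence of Lemmas \ref{Huv} and \ref{afhn0} plus the partition of $\mathcal{M}(H_n)$, with no new local counting. What it costs is precisely the point you flag: exhaustiveness of that partition, i.e. that the first vertical edge $u_jv_j\in M$ must sit at an odd index $j$. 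The paper's own proof of this lemma never needs that fact (the dichotomy on $u_1v_1$ is trivially exhaustive), although the paper does use the same partition silently in Theorem \ref{mainA}. Your citation of the parity discussion in Lemma \ref{minir} is not quite the right reference, since that argument concerns vertical edges lying on an $M$-alternating cycle; but the needed fact has a short direct proof: if $u_{2m}v_{2m}\in M$ and no $u_jv_j$ with $j<2m$ lies in $M$, then $w_{2m}w_{2m-1},z_{2m}z_{2m-1}\in M$, which forces $u_{2m-1}u_{2m-2},v_{2m-1}v_{2m-2}\in M$, then $w_{2m-2}w_{2m-3},z_{2m-2}z_{2m-3}\in M$, and the cascade propagates leftwards in $H_n$ until $u_1$ has only $v_1$ available, forcing $u_1v_1\in M$ and contradicting minimality. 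With that two-line check supplied in place of the vague appeal, your proof is complete.
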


\begin{proof}
We can divide $\mathcal{M}(H_n)$ into two subsets:
$\mathcal{M}_{1}(H_n)=\{M\in \mathcal{M}(H_n)|u_1v_1\in M\}$
and $\mathcal{M}_{\bar{1}}(H_n)=\{M\in \mathcal{M}(H_n)|u_1v_1\not\in M\}$
(see Fig. \ref{SQ2}).
In fact, $\mathcal{M}_{1}(H_n)$ is the case of Lemma \ref{Huv} for $k=0$.
So
\begin{equation*}
\sum_{M\in\mathcal{M}_1(H_n)}x^{af(H_n,M)}=x^2Af(G_{n-1},x).
\end{equation*}

Let $M\in\mathcal{M}_{\bar{1}}(H_n)$.
Then $s_{1,2}$ and $s_{1,4}$ have to be $M$-alternating squares,
and the restriction of $M$ on $H_{n-1}=H_n\ominus\{w_1,w_2,u_1,u_2,v_1,v_2,z_1,z_2\}$
is a perfect matching of $H_{n-1}$.
Similar to Lemma \ref{Huv},
\begin{equation*}
\sum_{M\in\mathcal{M}_{\bar{1}}(H_n)}x^{af(H_{n},M)}=(x^3+3x^2)Af(H_{n-1},x).
\end{equation*}
By Eq. (\ref{eq101}),
\begin{eqnarray*}
Af(H_n,x)=\sum_{M\in\mathcal{M}(H_n)}x^{af(H_n,M)}
&=&\sum_{M\in\mathcal{M}_1(H_n)}x^{af(H_n,M)}+
\sum_{M\in\mathcal{M}_{\bar{1}}(H_n)}x^{af(H_n,M)}\\
&=&x^2Af(G_{n-1},x)+(x^3+3x^2)Af(H_{n-1},x)
\end{eqnarray*}\end{proof}

\begin{theorem}\label{mainA}{\bf .}
For $n\geq3$,
\begin{eqnarray*}
\nonumber Af(G_n,x)&=&(3x^3+3x^2+x)Af(G_{n-1},x)-(2x^6+6x^5-x^4+3x^3)Af(G_{n-2},x)\\
&~&+(x^7+3x^6)Af(G_{n-3},x),
\end{eqnarray*}
where $Af(G_0,x)=1$, $Af(G_1,x)=2x^3+3x^2+x$ and $Af(G_2,x)=4x^6+9x^5+15x^4+3x^3+x^2$.
\end{theorem}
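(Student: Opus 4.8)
The plan is to partition $\mathcal{M}(G_n)$ according to the position of the leftmost vertical edge of $M$ and to read off the generating contribution of each part from Lemmas~\ref{Guv}, \ref{Guv1} and \ref{Guv2}. First I would split $\mathcal{M}(G_n)=\mathcal{M}_{\bar 0}(G_n)\,\sqcup\,\{M:u_0v_0\in M\}$. For a matching $M$ with $u_0v_0\in M$, tracking the edges forced around $u_0$ and $v_0$ inside $G_n$ (and using that each $w_{2i}$ and $z_{2i}$ has only two neighbours) shows that if $M$ contains some vertical edge $u_jv_j$ with $j\ge 1$, then the smallest such index is odd; hence
\begin{equation*}
\{M\in\mathcal{M}(G_n):u_0v_0\in M\}=\mathcal{M}_0^{u_0v_0}(G_n)\ \sqcup\ \bigsqcup_{k=0}^{n-1}\mathcal{M}_{2k+1}^{u_0v_0}(G_n).
\end{equation*}
Summing $x^{af(G_n,M)}$ over these classes and invoking Lemmas~\ref{Guv2}, \ref{Guv1} and \ref{Guv} yields
\begin{equation*}
Af(G_n,x)=xAf(G_{n-1},x)+(x^3+3x^2)^n+\sum_{k=0}^{n-1}\bigl((x^3+3x^2)^k+(x-1)x^{3k}\bigr)x^2Af(G_{n-1-k},x).
\end{equation*}

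Second, I would eliminate the convolution. Running the same decomposition on $H_n$ (using Lemmas~\ref{Huv} and \ref{afhn0}) shows that the $(x^3+3x^2)^k$--parts recombine exactly into $Af(H_n,x)$, so the identity above can be rewritten as
\begin{equation*}
Af(G_n,x)=xAf(G_{n-1},x)+Af(H_n,x)+(x-1)R_n(x),\qquad R_n(x):=\sum_{k=0}^{n-1}x^{3k+2}Af(G_{n-1-k},x).
\end{equation*}
Now $R_n(x)=x^2Af(G_{n-1},x)+x^3R_{n-1}(x)$ with $R_0=0$, and Lemma~\ref{hnr} gives $Af(H_n,x)=x^2Af(G_{n-1},x)+(x^3+3x^2)Af(H_{n-1},x)$. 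Writing $a_n:=Af(G_n,x)$, $h_n:=Af(H_n,x)$, $r_n:=R_n(x)$, $b_n:=a_n-xa_{n-1}=h_n+(x-1)r_n$, and letting $E$ denote the forward shift $n\mapsto n+1$, one has $(E-(x^3+3x^2))h_n=x^2a_n$ and $(E-x^3)r_n=x^2a_n$. Applying $E-(x^3+3x^2)$ to $b_n$ removes $h_n$ and leaves a combination of $a_n$ and $r_n$; applying $E-x^3$ to the result removes $r_n$ as well, leaving a relation among $a_{n+2},a_{n+1},a_n,a_{n-1}$ only. Collecting coefficients and substituting $x^3+3x^2$ produces precisely the claimed recurrence. (Equivalently, the same manipulation gives $\sum_{n\ge 0}Af(G_n,x)\,t^n=\dfrac{1-x^3t}{\,1-(3x^3+3x^2+x)t+(2x^6+6x^5-x^4+3x^3)t^2-(x^7+3x^6)t^3\,}$, whose denominator encodes the recurrence.) Finally I would check $Af(G_0,x)=1$, $Af(G_1,x)=2x^3+3x^2+x$ and $Af(G_2,x)=4x^6+9x^5+15x^4+3x^3+x^2$ directly; in fact the convolution identity above already reproduces the values for $n=1,2$.

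I expect the main obstacle to be the first step: proving that $\mathcal{M}_0^{u_0v_0}(G_n)$ together with the $\mathcal{M}_{2k+1}^{u_0v_0}(G_n)$ genuinely exhaust $\{M:u_0v_0\in M\}$, i.e.\ that a perfect matching through $u_0v_0$ cannot have its first positive-index vertical edge at an even position (and the analogous statement for $H_n$). This needs a careful, though elementary, analysis of how matched edges propagate from the left end of $G_n$. Once it is in place, everything else is bookkeeping: the local anti-forcing counts are delivered by Lemmas~\ref{Huv}--\ref{hnr}, and the passage from the convolution identity to the order-three recurrence is routine operator algebra (or the one-line generating-function computation above).
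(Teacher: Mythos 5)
Your proposal is correct and takes essentially the same route as the paper: the same partition of $\{M: u_0v_0\in M\}$ into $\mathcal{M}_0^{u_0v_0}(G_n)$ and the sets $\mathcal{M}_{2k+1}^{u_0v_0}(G_n)$ together with $\mathcal{M}_{\bar 0}(G_n)$ (the odd-index parity fact you flag is indeed needed and is left implicit in the paper, but follows from the short propagation argument you sketch, e.g.\ $w_2$ would be left unmatched if the first vertical edge were $u_2v_2$), the same recombination of the $(x^3+3x^2)^k$ terms into $Af(H_n,x)$ via Lemmas \ref{Huv} and \ref{afhn0}, and the same elimination using Lemma \ref{hnr}. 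The only difference is presentational: you eliminate the auxiliary sums by operator algebra (equivalently the generating function with denominator $1-(3x^3+3x^2+x)t+(2x^6+6x^5-x^4+3x^3)t^2-(x^7+3x^6)t^3$), whereas the paper does the same elimination by explicit shift-and-subtract substitutions; I verified your elimination reproduces exactly the claimed recurrence.
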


\begin{proof}
By Lemmas \ref{Guv}, \ref{Guv1} and \ref{Guv2},
$\mathcal{M}(G_n)$ can be divided into $n+2$ subsets:
$\mathcal{M}_{0}^{u_0v_0}(G_n)$, $\mathcal{M}_{\bar{0}}(G_n)$ and $\mathcal{M}_{2k+1}^{u_0v_0}(G_n),k=0,1,2,\ldots,n-1$.
By Eq. (\ref{eq101}),
\begin{eqnarray}\label{AFGx}
\nonumber Af(G_n,x)&=&\sum_{M\in\mathcal{M}(G_n)}x^{af(G_n,M)}\\
\nonumber &=& \sum_{k=0}^{n-1}\sum_{M\in\mathcal{M}_{2k+1}^{u_0v_0}(G_n)}x^{af(G_n,M)}+
\sum_{M\in \mathcal{M}_{\bar{0}}(G_n)}x^{af(G_n,M)}+\sum_{M\in \mathcal{M}_0^{u_0v_0}(G_n)}x^{af(G_n,M)}\\
\nonumber&=&\sum_{k=0}^{n-1}(x^3+3x^2)^kx^2Af(G_{n-k-1},x)+
x^2(x-1)\sum_{k=0}^{n-1}x^{3k}Af(G_{n-k-1},x)\\
&~&+xAf(G_{n-1},x)+(x^3+3x^2)^n.
\end{eqnarray}

By Lemmas \ref{Huv} and \ref{afhn0},
\begin{eqnarray}\label{AFHx}
\nonumber Af(H_n,x)&=&\sum_{M\in\mathcal{M}(H_n)}x^{af(H_n,M)}\\
\nonumber &=&\sum_{k=0}^{n-1}\sum_{M\in\mathcal{M}_{2k+1}(H_n)}x^{af(H_n,M)}
+\sum_{M\in \mathcal{M}_0(H_n)}x^{af(H_n,M)}\\
&=&\sum_{k=0}^{n-1}(x^3+3x^2)^kx^2Af(G_{n-k-1},x)+(x^3+3x^2)^n.
\end{eqnarray}
Substituting Eq. (\ref{AFHx}) into (\ref{AFGx}),
then
\begin{eqnarray*}
Af(G_n,x)=Af(H_n,x)+xAf(G_{n-1},x)
+x^2(x-1)\sum_{k=0}^{n-1}x^{3k}Af(G_{n-k-1},x).
\end{eqnarray*}
So
\begin{eqnarray}\label{x3}
\nonumber x^3Af(G_n,x)&=&x^3Af(H_n,x)+x^4Af(G_{n-1},x)\\
&~&+x^2(x-1)\sum_{k=0}^{n-1}x^{3(k+1)}Af(G_{n-k-1},x)
\end{eqnarray}
and
\begin{eqnarray}\label{afgn}
Af(G_{n+1},x)=Af(H_{n+1},x)+xAf(G_{n},x)
+x^2(x-1)\sum_{k=0}^{n}x^{3k}Af(G_{n-k},x).
\end{eqnarray}
Subtracting  Eq. (\ref{x3}) from (\ref{afgn}),
\begin{eqnarray}\label{afgn1}
\nonumber Af(G_{n+1},x)&=&(2x^3-x^2+x)Af(G_{n},x)-x^4Af(G_{n-1},x)\\
&~&+Af(H_{n+1},x)-x^3Af(H_{n},x).
\end{eqnarray}

According to Lemma \ref{hnr},
\begin{equation*}
Af(H_{n+1},x)-x^3Af(H_{n},x)=x^2Af(G_{n},x)+3x^2Af(H_{n},x),
\end{equation*}
therefore
\begin{eqnarray*}\label{afgn2}
\nonumber Af(G_{n+1},x)&=&(2x^3-x^2+x)Af(G_{n},x)-x^4Af(G_{n-1},x)\\
&~&+x^2Af(G_{n},x)+3x^2Af(H_{n},x)\\
&=&(2x^3+x)Af(G_{n},x)-x^4Af(G_{n-1},x)+3x^2Af(H_{n},x).
\end{eqnarray*}
This implies that
\begin{eqnarray}\label{afgn3}
 3x^2Af(H_{n},x)&=&Af(G_{n+1},x)-(2x^3+x)Af(G_{n},x)+x^4Af(G_{n-1},x),
\end{eqnarray}
and
\begin{eqnarray}\label{afgn4}
3x^2Af(H_{n-1},x)&=&Af(G_{n},x)-(2x^3+x)Af(G_{n-1},x)+x^4Af(G_{n-2},x).
\end{eqnarray}
By Lemma \ref{hnr},
\begin{equation}\label{hnx3}
3x^2Af(H_n,x)=3x^4Af(G_{n-1})+(x^3+3x^2)3x^2Af(H_{n-1},x).
\end{equation}
Substituting Eqs. (\ref{afgn3}) and (\ref{afgn4}) into (\ref{hnx3}),
we can obtain the following recurrence formula
\begin{eqnarray*}
Af(G_{n+1},x)&=&(3x^3+3x^2+x)Af(G_{n},x)-(2x^6+6x^5-x^4+3x^3)Af(G_{n-1},x)\\
&~&+(x^7+3x^6)Af(G_{n-2},x),
\end{eqnarray*}
the proof is completed.
\end{proof}

According to Theorem \ref{mainA}, we can obtain the following
expression.

\begin{theorem}\label{mainB}{\bf .}
\begin{equation}\label{Afgnx}
Af(G_n,x)=R_n(x)+Q_n(x)+3x^{n+1}+x^n,
\end{equation}
where $R_n(x)$ and $Q_n(x)$ are listed in the Appendix.
\end{theorem}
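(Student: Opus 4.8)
The plan is to mimic, verbatim in structure, the generating-function argument used in the proof of Theorem~\ref{main2}, now fed by the third-order recurrence of Theorem~\ref{mainA}. Write $A_n:=Af(G_n,x)$ and $\mathcal{G}(t)=\sum_{n\ge 0}A_n t^n$. Multiplying the recurrence of Theorem~\ref{mainA} by $t^n$, summing over $n\ge 3$, and absorbing the boundary terms $t^0,t^1,t^2$ with the initial data $A_0=1$, $A_1=2x^3+3x^2+x$, $A_2=4x^6+9x^5+15x^4+3x^3+x^2$ (exactly as the $F_0,F_1,F_2$ terms were handled in Theorem~\ref{main2}), the coefficients of $t^2$ collapse and one is left with
\begin{equation*}
\mathcal{G}(t)=\FS{1-x^3 t}{1-(3x^3+3x^2+x)t+(2x^6+6x^5-x^4+3x^3)t^2-(x^7+3x^6)t^3},
\end{equation*}
which I would first double-check by re-expanding to order $t^2$ against $A_0,A_1,A_2$.

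Next I would write $\mathcal{G}(t)=(1-x^3t)\sum_{i\ge 0}\big((3x^3+3x^2+x)t-(2x^6+6x^5-x^4+3x^3)t^2+(x^7+3x^6)t^3\big)^i$, apply the multinomial theorem to each $i$-th power to split the three $t$-degrees, and then apply the binomial theorem two more times to resolve the powers of $3x^3+3x^2+x$, of $2x^6+6x^5-x^4+3x^3$, and of $x^7+3x^6=x^6(x+3)$ into monomials in $x$ (the same three-fold cascade of binomial expansions that produces Eqs.~(3.13)--(3.17) in the proof of Theorem~\ref{main2}). Extracting the coefficient of $t^n$ produces a finite alternating multi-indexed sum; separating the contribution of the numerator term $1$ from that of the numerator term $-x^3 t$ (i.e.\ writing $\mathcal{G}(t)=(1-x^3t)\sum_n d_n t^n$ and $A_n=d_n-x^3 d_{n-1}$) yields the two blocks that will be named $R_n(x)$ and $Q_n(x)$ in the Appendix.

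The one step requiring separate care is isolating the clean summand $3x^{n+1}+x^n$. Since the cubic denominator does not factor over $\mathbb{Q}[x]$, there is no partial-fraction shortcut and the lowest powers of $x$ have to be tracked by hand. Because $3x^3+3x^2+x=x(3x^2+3x+1)$ has $x$-valuation $1$, while $2x^6+6x^5-x^4+3x^3=x^3(2x^3+6x^2-x+3)$ has $x$-valuation $3$ and $x^7+3x^6$ has $x$-valuation $6$, the only multinomial terms of $d_n$ that can reach degrees $n$ or $n+1$ are $(3x^3+3x^2+x)^n$ and $-(n-1)(3x^3+3x^2+x)^{n-2}(2x^6+6x^5-x^4+3x^3)$; their two lowest monomials contribute $x^n+3nx^{n+1}$ and $-3(n-1)x^{n+1}$ respectively, and the term $-x^3 d_{n-1}$ only affects degrees $\ge n+2$, so $A_n=x^n+3x^{n+1}+(\text{terms of degree}\ge n+2)$. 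Peeling off this bottom part and letting $R_n(x)+Q_n(x)$ denote everything that remains gives Eq.~\eqref{Afgnx}; a direct check for $n=0,1,2$ (and $n=3$, where the recurrence first applies) finishes the proof.

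I expect the main obstacle to be purely bookkeeping: carrying four nested summation indices with interlocked ranges through the three successive binomial/multinomial expansions without dropping or double-counting terms, and verifying that the degree count which isolates $3x^{n+1}+x^n$ is exhaustive (no other index configuration reaches the bottom two degrees). No conceptual difficulty beyond that already present in the proof of Theorem~\ref{main2} is anticipated.
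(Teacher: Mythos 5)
Your proposal follows essentially the same route as the paper: form the generating function $P(t)=\sum_n Af(G_n,x)t^n$ from the recurrence of Theorem \ref{mainA}, obtain the closed form $(1-x^3t)/\bigl(1-(3x^3+3x^2+x)t+(2x^6+6x^5-x^4+3x^3)t^2-(x^7+3x^6)t^3\bigr)$, expand by the geometric series and repeated binomial/multinomial theorem, and extract the coefficient of $t^n$, splitting the two numerator contributions into the blocks $R_n(x)$ and $Q_n(x)$. Your valuation argument isolating $x^n+3x^{n+1}$ is correct and matches what the paper does in the Appendix by peeling off the $q=0,1$ terms of the first sum.
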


\begin{proof}
Let $Af_n:=Af(G_n,x)$.
Then the generating function of the sequence $\{Af_n\}_{n=0}^\infty$ is
\begin{eqnarray}
\nonumber P(t)=\sum_{n=0}^{\infty}Af_nt^n=1+Af_1t+Af_2t^2+\sum_{n=3}^{\infty}Af_nt^n.
\end{eqnarray}
By Theorem \ref{mainA},
\begin{eqnarray}
\nonumber P(t)&=& 1+Af_1t+Af_2t^2+(3x^3+3x^2+x)\sum_{n=3}^{\infty}Af_{n-1}t^n\\
\nonumber &~&-(2x^6+6x^5-x^4+3x^3)\sum_{n=3}^{\infty}Af_{n-2}t^n
+(x^7+3x^6)\sum_{n=3}^{\infty}Af_{n-3}t^n\\
\nonumber &=& 1-x^3t+((3x^3+3x^2+x)t
-(2x^6+6x^5-x^4+3x^3)t^2+(x^7+3x^6)t^3)P(t).
\end{eqnarray}
So
\begin{eqnarray}
\nonumber P(t)&=&\FS{1-x^3t}{1-((3x^3+3x^2+x)t
-(2x^6+6x^5-x^4+3x^3)t^2+(x^7+3x^6)t^3)}\\
\nonumber &=&(1-x^3t)\sum_{i=0}^{\infty}((3x^3+3x^2+x)t
-(2x^6+6x^5-x^4+3x^3)t^2+(x^7+3x^6)t^3)^i\\
\nonumber &=&(1-x^3t)
\sum_{i=0}^{\infty}\sum_{j=0}^{i}\sum_{k=0}^{i-j}
(-1)^{i-j-k}{i\choose j}{i-j\choose k}(3x^3+3x^2+x)^j(x^7+3x^6)^k\\
\nonumber &~&\cdot(2x^6+6x^5-x^4+3x^3)^{i-j-k}t^{2i-j+k}.
\end{eqnarray}
Let $n=2i-j+k$.
Then
\begin{eqnarray}
\nonumber P(t) &=& 1+
\sum_{n=1}^{\infty}\Bigg{(}
\sum_{i=\lceil\frac{n}{3}\rceil}^{n}
\sum_{j=max\{0,2i-n\}}^{\lfloor\frac{3i-n}{2}\rfloor}(-1)^{3i-2j-n}
{i\choose j}{i-j\choose n-2i+j}(3x^3+3x^2+x)^j\\
\nonumber &~&\cdot(x^7+3x^6)^{n-2i+j}(2x^6+6x^5-x^4+3x^3)^{3i-2j-n}+\\
\nonumber &~& \sum_{i=\lceil\frac{n-1}{3}\rceil}^{n-1}
\sum_{j=max\{0,2i-n+1\}}^{\lfloor\frac{3i-n+1}{2}\rfloor}
(-1)^{3i-2j-n+2}{i\choose j}{i-j\choose n-2i+j-1}x^3(3x^3+3x^2+x)^j\\
\nonumber &~&\cdot(x^7+3x^6)^{n-2i+j-1}(2x^6+6x^5-x^4+3x^3)^{3i-2j-n+1}
\Bigg{)}t^{n}\\
\nonumber &=& 1+\sum_{n=1}^{\infty}Af_nt^n.
\end{eqnarray}
Therefore,
\begin{eqnarray*}
Af_n&=&\sum_{i=\lceil\frac{n}{3}\rceil}^{n}
\sum_{j=max\{0,2i-n\}}^{\lfloor\frac{3i-n}{2}\rfloor}(-1)^{3i-2j-n}
{i\choose j}{i-j\choose n-2i+j}(3x^3+3x^2+x)^j\\
\nonumber &~&\cdot(x^7+3x^6)^{n-2i+j}(2x^6+6x^5-x^4+3x^3)^{3i-2j-n}+\\
\nonumber &~& \sum_{i=\lceil\frac{n-1}{3}\rceil}^{n-1}
\sum_{j=max\{0,2i-n+1\}}^{\lfloor\frac{3i-n+1}{2}\rfloor}
(-1)^{3i-2j-n+2}{i\choose j}{i-j\choose n-2i+j-1}x^3(3x^3+3x^2+x)^j\\
 &~&\cdot(x^7+3x^6)^{n-2i+j-1}(2x^6+6x^5-x^4+3x^3)^{3i-2j-n+1}.
\end{eqnarray*}

Furthermore, we can derive the Eq. (\ref{Afgnx}) by using the binomial theorem,
more details are listed in the Appendix.
\end{proof}

By Theorem \ref{mainB}, $x^i$ is a term of
$Af(G_n,x)$ for $i=n,n+1,\ldots,3n$, thus we have the following corollary.

\begin{corollary}\label{Speca}{\bf .}
\begin{enumerate}
  \item $af(G_n)=n$, $Af(G_n)=3n$;
  \item Spec$_{af}(G_n)=[n,3n]$.
\end{enumerate}
\end{corollary}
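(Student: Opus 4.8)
The statement amounts to showing that, writing $Af(G_n,x)=\sum_i u(G_n,i)x^i$, the coefficient $u(G_n,i)$ is strictly positive for every integer $i$ with $n\le i\le 3n$ and vanishes for $i<n$ and $i>3n$; granting this, $af(G_n)=n$, $Af(G_n)=3n$ and $\mathrm{Spec}_{af}(G_n)=[n,3n]$ follow at once from the definition of the anti-forcing polynomial. I would prove this by strong induction on $n$, using the partition of $\mathcal{M}(G_n)$ already recorded as Eq.~(\ref{AFGx}) in the proof of Theorem~\ref{mainA} (one could instead inspect $R_n(x)+Q_n(x)$ of Theorem~\ref{mainB}, but the recursion is cleaner). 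The base case $n=0$ is $Af(G_0,x)=1$, with exponent set $\{0\}=[0,0]$; for $n=1,2$ one reads off from $Af(G_1,x)=2x^3+3x^2+x$ and $Af(G_2,x)=4x^6+9x^5+15x^4+3x^3+x^2$ that the exponent sets are $[1,3]$ and $[2,6]$.

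For the inductive step I would read Eq.~(\ref{AFGx}) as a sum of three blocks, each with nonnegative coefficients: (i) $(x^3+3x^2)^n=x^{2n}(x+3)^n$, whose exponent set is exactly $[2n,3n]$ with all coefficients positive; (ii) $xAf(G_{n-1},x)$, whose exponent set is exactly $[n,3n-2]$ with all coefficients positive by the induction hypothesis; and (iii) $\sum_{k=0}^{n-1}\bigl((x^3+3x^2)^k+(x-1)x^{3k}\bigr)x^2Af(G_{n-k-1},x)$. Since $[n,3n-2]\cup[2n,3n]=[n,3n]$ for $n\ge1$, blocks (i) and (ii) alone already place a positive coefficient on $x^i$ for each $i\in[n,3n]$, so it remains only to check that block (iii) contributes no negative coefficient and that no block produces a monomial of degree $<n$ or $>3n$.

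The heart of the matter is the nonnegativity of block (iii), where the factor $(x-1)x^{3k}$ carries a minus sign. I would use the identity $\bigl((x^3+3x^2)^k+(x-1)x^{3k}\bigr)x^2=x^{2k+2}\bigl((x+3)^k-x^k\bigr)+x^{3k+3}$ together with the elementary fact that $(x+3)^k-x^k=\sum_{0\le j<k}\binom{k}{j}3^{k-j}x^j$ has nonnegative coefficients; since $Af(G_{n-k-1},x)$ also has nonnegative coefficients directly from its definition, every summand of block (iii), hence the whole block, has nonnegative coefficients. For the degree bounds, each block has degree at most $3n$ — the $k$-th summand of block (iii) has degree $3k+3+3(n-k-1)=3n$ — so $u(G_n,i)=0$ for $i>3n$, while the coefficient of $x^{3n}$ is positive because block (i) contributes $1$, giving $Af(G_n)=3n$; and every monomial in block (i) has degree $\ge 2n$, in block (ii) degree $\ge n$, and in block (iii) degree $\ge n+2$, so $u(G_n,i)=0$ for $i<n$ and $u(G_n,n)>0$ (coming from block (ii); in fact $u(G_n,n)=1$), giving $af(G_n)=n$. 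This closes the induction and yields $\mathrm{Spec}_{af}(G_n)=[n,3n]$. The only step beyond routine bookkeeping with exponent sets of sums and products of nonnegative-coefficient polynomials is this absorption of the minus sign via the coefficientwise inequality $(x+3)^k\ge x^k$; as a consistency check, the lower bound $af(G_n)\ge n$ on its own also follows from $af(G_n,M)=c'(M)\ge c(M)=f(G_n,M)\ge f(G_n)=n$ (Theorems~\ref{minimax2} and \ref{cycle}, Corollary~\ref{Spec}), every set of disjoint alternating cycles being compatible.
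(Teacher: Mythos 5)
Your proof is correct, but it takes a different route from the paper. The paper deduces the corollary in one line from the explicit formula of Theorem~\ref{mainB}: it asserts that $x^i$ occurs in $Af(G_n,x)$ for every $i=n,\dots,3n$ by inspecting $R_n(x)+Q_n(x)+3x^{n+1}+x^n$. You instead argue by induction from the intermediate identity Eq.~(\ref{AFGx}) inside the proof of Theorem~\ref{mainA}, splitting it into the three blocks $(x^3+3x^2)^n$, $xAf(G_{n-1},x)$ and $\sum_{k}\bigl((x^3+3x^2)^k+(x-1)x^{3k}\bigr)x^2Af(G_{n-k-1},x)$, and absorbing the only negative term via $\bigl((x^3+3x^2)^k+(x-1)x^{3k}\bigr)x^2=x^{2k+2}\bigl((x+3)^k-x^k\bigr)+x^{3k+3}$, which is coefficientwise nonnegative; your degree bookkeeping (block (i) supported on $[2n,3n]$, block (ii) on $[n,3n-2]$, block (iii) in $[n+2,3n]$) is accurate, and the identity you use checks out. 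What each buys: the paper's argument is immediate once Theorem~\ref{mainB} is in hand, but the positivity of the coefficients is not transparent from the appendix expressions, which carry alternating signs, so your induction is arguably the more self-contained and verifiable justification that every value in $[n,3n]$ is actually attained; it also avoids the appendix entirely, yields the extra fact $u(G_n,n)=1$, and your closing remark that $af(G_n)\ge f(G_n)=n$ via Theorems~\ref{minimax2} and \ref{cycle} gives an independent confirmation of the lower end of the spectrum that the paper does not state.
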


By Theorem \ref{mainA},
we can obtain the exact expression of
the sum over all anti-forcing number of perfect matchings of $G_n$.

\begin{theorem}\label{EXPR}{\bf .}
\begin{eqnarray}\label{afgn5}
\nonumber \FS{d}{dx}Af(G_n,x)\Big{|}_{x=1}&=&-3+\FS{150+67\sqrt{5}}{100}(3-\sqrt{5})^n+
\FS{29-10\sqrt{5}}{20}n(3-\sqrt{5})^n\\
&~&+\FS{150-67\sqrt{5}}{100}(3+\sqrt{5})^n+
\FS{29+10\sqrt{5}}{20}n(3+\sqrt{5})^n.
\end{eqnarray}
\end{theorem}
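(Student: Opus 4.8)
The plan is to follow the proof of Theorem~\ref{main111} verbatim in structure: differentiate the three-term recurrence of Theorem~\ref{mainA} at $x=1$, eliminate the resulting perfect-matching counts with Lemma~\ref{pm}, solve the homogeneous linear recurrence that emerges, and then fix the constants from initial data. Put $A_n:=\frac{d}{dx}Af(G_n,x)\big|_{x=1}$, and recall from the definition (\ref{eq101}) that $Af(G_n,1)=\Phi(G_n)=:\Phi_n$. Differentiating the recurrence of Theorem~\ref{mainA} with respect to $x$ and evaluating at $x=1$ — the coefficient polynomials $3x^3+3x^2+x$, $2x^6+6x^5-x^4+3x^3$ and $x^7+3x^6$ take the values $7,10,4$ at $x=1$, with derivatives $16,47,25$ there — gives, for $n\geq3$,
\[
A_n=7A_{n-1}-10A_{n-2}+4A_{n-3}+16\Phi_{n-1}-47\Phi_{n-2}+25\Phi_{n-3}.
\]
By Lemma~\ref{pm} one may replace $16\Phi_{n-1}-47\Phi_{n-2}+25\Phi_{n-3}$ by $49\Phi_{n-2}-39\Phi_{n-3}$, but this is only cosmetic.

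The next step is to get rid of the $\Phi$-terms. Observe that $x^3-7x^2+10x-4=(x-1)(x^2-6x+4)$ and that, by Lemma~\ref{pm}, the sequence $\Phi_n$ is annihilated by the recurrence operator with characteristic polynomial $x^2-6x+4$. Hence applying that operator to the relation above cancels the inhomogeneous part, and $A_n$ satisfies the homogeneous recurrence
\[
A_n=13A_{n-1}-56A_{n-2}+92A_{n-3}-64A_{n-4}+16A_{n-5}\qquad(n\geq5),
\]
whose characteristic polynomial is $(x-1)(x^2-6x+4)^2=x^5-13x^4+56x^3-92x^2+64x-16$ — the very same equation appearing in Theorem~\ref{main111}. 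Its roots are $x_0=1$ (simple) and $x_1=x_2=3-\sqrt5$, $x_3=x_4=3+\sqrt5$ (each of multiplicity two), so the general solution has the shape $A_n=\mu_0+\mu_1(3-\sqrt5)^n+\mu_2 n(3-\sqrt5)^n+\mu_3(3+\sqrt5)^n+\mu_4 n(3+\sqrt5)^n$, matching (\ref{afgn5}).

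Finally I would pin down $\mu_0,\dots,\mu_4$ from initial values. From the base polynomials listed in Theorem~\ref{mainA} one reads off $A_0=0$, $A_1=13$ and $A_2=140$; computing $Af(G_3,x)$ and $Af(G_4,x)$ from the recurrence of Theorem~\ref{mainA} — or, equivalently, substituting $\Phi_0=1,\Phi_1=6,\Phi_2=32,\Phi_3=168$ from Lemma~\ref{pm} and Theorem~\ref{PM} into the inhomogeneous recurrence above — yields $A_3=1105$ and $A_4=7721$. Solving the $5\times5$ linear system for $\mu_0,\dots,\mu_4$ then gives $\mu_0=-3$, $\mu_1=\frac{150+67\sqrt5}{100}$, $\mu_2=\frac{29-10\sqrt5}{20}$, $\mu_3=\frac{150-67\sqrt5}{100}$, $\mu_4=\frac{29+10\sqrt5}{20}$. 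Since the homogeneous recurrence is valid for $n\geq5$ and the five computed values cover $0\leq n\leq4$, the closed form (\ref{afgn5}) holds for every $n\geq0$.

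Conceptually the argument is the same as that of Theorem~\ref{main111}, so there is no real mathematical obstacle; the main difficulty is purely computational bookkeeping. One must differentiate the degree-seven coefficient polynomials of Theorem~\ref{mainA} without slips, keep careful track of the ranges of validity (Theorem~\ref{mainA} holds only for $n\geq3$, so the order-five homogeneous recurrence is guaranteed only for $n\geq5$, which is why five rather than three explicit initial values are needed), and solve the $5\times5$ system with $\sqrt5$-coefficients accurately. I would close by checking the resulting formula against $A_0=0$, $A_1=13$ and $A_2=140$.
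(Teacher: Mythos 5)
Your proposal is correct and follows essentially the same route as the paper: differentiate the recurrence of Theorem \ref{mainA} at $x=1$ to get $A_n=7A_{n-1}-10A_{n-2}+4A_{n-3}+16\Phi_{n-1}-47\Phi_{n-2}+25\Phi_{n-3}$, eliminate the $\Phi$-terms via Lemma \ref{pm} to reach the order-five homogeneous recurrence with characteristic polynomial $x^5-13x^4+56x^3-92x^2+64x-16=(x-1)(x^2-6x+4)^2$, and fit the five constants from initial data. Your packaging is slightly cleaner in two respects—applying the annihilating operator $E^2-6E+4$ directly instead of the paper's step-by-step elimination, and fitting the constants from $A_0,\dots,A_4$ (which I have checked equal $0,13,140,1105,7721$) rather than from $A_4,\dots,A_8$, thereby avoiding the paper's separate verification of the cases $n=0,1,2,3$—but the substance of the argument is the same.
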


\begin{proof}
Recall that $\Phi(G_n)=Af(G_n,1)$,
for convenience, let $\Phi_n:=\Phi(G_n)$ and $A_n:=\FS{d}{dx}Af(G_n,x)\Big{|}_{x=1}$.
By Theorem \ref{mainA},
\begin{eqnarray*}
A_n=7A_{n-1}-10A_{n-2}+4A_{n-3}+16\Phi_{n-1}-47\Phi_{n-2}+25\Phi_{n-3}.
\end{eqnarray*}
According to Lemma \ref{pm},
\begin{eqnarray*}
4A_n &=&28A_{n-1}-40A_{n-2}+16A_{n-3}+64\Phi_{n-1}-
188\Phi_{n-2}+100\Phi_{n-3}\\
&=&28A_{n-1}-40A_{n-2}+16A_{n-3}+64\Phi_{n-1}-
25(6\Phi_{n-2}-4\Phi_{n-3})-38\Phi_{n-2}\\
&=&28A_{n-1}-40A_{n-2}+16A_{n-3}+39\Phi_{n-1}-38\Phi_{n-2}.
\end{eqnarray*}
So
\begin{equation*}
4A_{n+1}=28A_{n}-40A_{n-1}+16A_{n-2}+39\Phi_{n}-38\Phi_{n-1}
\end{equation*}
and
\begin{equation}\label{An2}
4A_{n+2}=28A_{n+1}-40A_{n}+16A_{n-1}+39\Phi_{n+1}-38\Phi_{n}.
\end{equation}
Note that
\begin{eqnarray*}
39\Phi_{n+1}-38\Phi_{n}&=&
39(6\Phi_{n}-4\Phi_{n-1})-38(6\Phi_{n-1}-4\Phi_{n-2})\\
&=&6(39\Phi_{n}-38\Phi_{n-1})-4(39\Phi_{n-1}-38\Phi_{n-2})\\
&=&6(28A_{n}-40A_{n-1}+16A_{n-2}+39\Phi_{n}-38\Phi_{n-1})\\
&~&-4(28A_{n-1}-40A_{n-2}+16A_{n-3}+39\Phi_{n-1}-38\Phi_{n-2})\\
&~&-6(28A_{n}-40A_{n-1}+16A_{n-2})+4(28A_{n-1}-40A_{n-2}+16A_{n-3})\\
&=& 24A_{n+1}-184A_{n}+352A_{n-1}-256A_{n-2}+64A_{n-3},
\end{eqnarray*}
substituting it into Eq. (\ref{An2}),
\begin{equation*}
A_{n+2}=13A_{n+1}-56A_{n}+92A_{n-1}-64A_{n-2}+16A_{n-3}.
\end{equation*}

This recurrence relation is the same as (\ref{m223}),
by initial values $A_4=7721$,
$A_5=50541$, $A_6=317565$, $A_7=1939901$ and $A_8=11608381$,
we can prove Eq. (\ref{afgn5}) for $n\geq4$.
Actually, Eq. (\ref{afgn5}) also holds for $n=0,1,2,3$,
so the proof is completed.
\end{proof}

By Theorems \ref{PM} and \ref{EXPR}, we can obtain the following asymptotic behavior.

\begin{corollary}{\bf .}
\begin{equation*}
\lim\limits_{n\rightarrow\infty}\FS{A_n}{n\Phi_n}=\FS{5+37\sqrt{5}}{40}.
\end{equation*}
\end{corollary}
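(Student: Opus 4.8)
The plan is to feed the closed forms from Theorems~\ref{PM} and \ref{EXPR} straight into the ratio and then isolate the dominant term. First I would record
\[
\Phi_n=\FS{5-3\sqrt{5}}{10}(3-\sqrt{5})^{n}+\FS{5+3\sqrt{5}}{10}(3+\sqrt{5})^{n}
\]
and
\[
A_n=-3+\FS{150+67\sqrt{5}}{100}(3-\sqrt{5})^{n}+\FS{29-10\sqrt{5}}{20}\,n(3-\sqrt{5})^{n}+\FS{150-67\sqrt{5}}{100}(3+\sqrt{5})^{n}+\FS{29+10\sqrt{5}}{20}\,n(3+\sqrt{5})^{n}.
\]

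Since $0<3-\sqrt{5}<1<3+\sqrt{5}$, dividing through by $(3+\sqrt{5})^{n}$ annihilates in the limit every term carrying a factor $(3-\sqrt{5})^{n}$ (even after multiplication by $n$, because $n r^{n}\to 0$ whenever $|r|<1$), and likewise the constant $-3$. Hence
\[
\FS{A_n}{(3+\sqrt{5})^{n}}=\FS{29+10\sqrt{5}}{20}\,n+o(n),\qquad \FS{\Phi_n}{(3+\sqrt{5})^{n}}=\FS{5+3\sqrt{5}}{10}+o(1),
\]
so that
\[
\lim_{n\to\infty}\FS{A_n}{n\Phi_n}=\FS{(29+10\sqrt{5})/20}{(5+3\sqrt{5})/10}=\FS{29+10\sqrt{5}}{2(5+3\sqrt{5})}.
\]

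It then remains only to rationalize: multiplying numerator and denominator by $5-3\sqrt{5}$ and using $(5+3\sqrt{5})(5-3\sqrt{5})=-20$ together with $(29+10\sqrt{5})(5-3\sqrt{5})=-5-37\sqrt{5}$ gives $\FS{5+37\sqrt{5}}{40}$, which is the asserted value. There is no genuine obstacle in this argument; the only points to watch are that the $n(3-\sqrt{5})^{n}$ contribution is truly negligible (which holds since $3-\sqrt{5}<1$) and that the surd arithmetic is carried out without sign slips, the subtlety being that $5-3\sqrt{5}<0$, so the two negative intermediate products cancel to yield a positive limit.
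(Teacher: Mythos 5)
Your argument is correct and is exactly the route the paper intends: Corollary follows by substituting the closed forms of Theorems \ref{PM} and \ref{EXPR}, keeping only the dominant $n(3+\sqrt{5})^{n}$ terms, and simplifying $\frac{29+10\sqrt{5}}{2(5+3\sqrt{5})}$ to $\frac{5+37\sqrt{5}}{40}$. The surd arithmetic and the discarding of the $(3-\sqrt{5})^{n}$ and constant terms are all handled correctly, so nothing needs to be added.
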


\newpage
\thispagestyle{empty}
\newgeometry{left=1cm,right=1cm,top=1cm,bottom=1cm}
\begin{landscape}

\noindent \textbf{Appendix}
\vspace{1mm}
\begin{eqnarray*}\label{Afgnxx}
&~&\sum_{i=\lceil\frac{n}{3}\rceil}^{n}
\sum_{j=max\{0,2i-n\}}^{\lfloor\frac{3i-n}{2}\rfloor}(-1)^{3i-2j-n}
{i\choose j}{i-j\choose n-2i+j}(3x^3+3x^2+x)^j
(x^7+3x^6)^{n-2i+j}(2x^6+6x^5-x^4+3x^3)^{3i-2j-n}\\
&=&x^n\sum\limits_{q=0}^{2n}
\sum\limits_{i=max\{n-q,\lceil\frac{n}{3}\rceil\}}^{n}
\sum\limits_{j=max\{0,2i-n\}}^{min\{q+3i-2n,\lfloor\frac{4i-q}{2}\rfloor,
\lfloor\frac{3i-n}{2}\rfloor\}}
\sum\limits_{k=max\{0,\lceil\frac{q+4j-4i}{2}\rceil\}}^{min\{j,q+3i-j-2n\}}
\sum\limits_{r=max\{0,q+4j-k-4i\}}^{min\{k,q+3i-k-j-2n\}}
\sum\limits_{m=max\{0,q+5j+n-k-r-6i\}}^{min\{\alpha,q+3i-k-r-j-2n\}}
\sum\limits_{s=max\{0,\gamma+n-j-q\}}^
{min\{\beta,\lfloor\frac{\gamma+6i-5j-n-q}{3}\rfloor\}}\\
\nonumber &~&\sum\limits_{l=max\{0,\gamma+3i-3j-2s-q\}}^
{min\{\beta-s,\lfloor\frac{\gamma+6i-5j-3s-n-q}{2}\rfloor\}}{i\choose j}{i-j\choose \alpha}{j\choose k}{k\choose r}
{\alpha\choose m}{\beta\choose s}{\beta-s\choose l}{\beta-s-l\choose \theta}
(-1)^{\beta+l}~2^{\beta-s-l}~3^{k+i-j-m-l-\theta}x^q\\
&=&x^n\sum\limits_{q=2}^{2n}
\sum\limits_{i=max\{n-q,\lceil\frac{n}{3}\rceil\}}^{n}
\sum\limits_{j=max\{0,2i-n\}}^{min\{q+3i-2n,\lfloor\frac{4i-q}{2}\rfloor,
\lfloor\frac{3i-n}{2}\rfloor\}}
\sum\limits_{k=max\{0,\lceil\frac{q+4j-4i}{2}\rceil\}}^{min\{j,q+3i-j-2n\}}
\sum\limits_{r=max\{0,q+4j-k-4i\}}^{min\{k,q+3i-k-j-2n\}}
\sum\limits_{m=max\{0,q+5j+n-k-r-6i\}}^{min\{\alpha,q+3i-k-r-j-2n\}}
\sum\limits_{s=max\{0,\gamma+n-j-q\}}^
{min\{\beta,\lfloor\frac{\gamma+6i-5j-n-q}{3}\rfloor\}}\\
\nonumber &~&\sum\limits_{l=max\{0,\gamma+3i-3j-2s-q\}}^
{min\{\beta-s,\lfloor\frac{\gamma+6i-5j-3s-n-q}{2}\rfloor\}}{i\choose j}{i-j\choose \alpha}{j\choose k}{k\choose r}
{\alpha\choose m}{\beta\choose s}{\beta-s\choose l}{\beta-s-l\choose \theta}
(-1)^{\beta+l}~2^{\beta-s-l}~3^{k+i-j-m-l-\theta}x^q+3x^{n+1}+x^n\\
&=&R_n(x)+3x^{n+1}+x^n\\
&~&\\
&~&\sum_{i=\lceil\frac{n-1}{3}\rceil}^{n-1}
\sum_{j=max\{0,2i-n+1\}}^{\lfloor\frac{3i-n+1}{2}\rfloor}
(-1)^{3i-2j-n+2}{i\choose j}{i-j\choose n-2i+j-1}x^3(3x^3+3x^2+x)^j(x^7+3x^6)^{n-2i+j-1}
(2x^6+6x^5-x^4+3x^3)^{3i-2j-n+1}\\
&=&x^n\sum\limits_{q=2}^{2n}\sum\limits_{i=max\{n-q+1,\lceil\frac{n-1}{3}\rceil\}}^{n-1}
\sum\limits_{j=max\{0,2i-n+1\}}^
{min\{q+3i-2n,\lfloor\frac{4i-q+2}{2}\rfloor,\lfloor\frac{3i-n+1}{2}\rfloor\}}
\sum\limits_{k=max\{0,\lceil\frac{q+4j-4i-2}{2}\rceil\}}^{min\{j,q+3i-j-2n\}}
\sum\limits_{r=max\{0,q+4j-k-4i-2\}}^{min\{k,q+3i-k-j-2n\}}
\sum\limits_{m=max\{0,q+5j+n-k-r-6i-3\}}^{min\{\alpha-1,q+3i-k-r-j-2n\}}
\sum\limits_{s=max\{0,\gamma+n-j-q+1\}}^
{min\{\beta+1,\lfloor\frac{\gamma+6i-5j-n-q+3}{3}\rfloor\}}\\
 &~&\sum\limits_{l=max\{0,\gamma+3i-3j-2s-q+2\}}^
{min\{\beta-s+1,\lfloor\frac{\gamma+6i-5j-3s-n-q+3}{2}\rfloor\}}{i\choose j}{i-j\choose \alpha-1}{j\choose k}{k\choose r}{\alpha-1\choose m}{\beta+1\choose s}{\beta-s+1\choose l}
{\beta-s-l+1\choose \theta-2}(-1)^{\beta+l+2}~
2^{\beta-s-l+1}~3^{k+i-j-m-l-\theta+2}x^q\\
&=&Q_n(x)
\end{eqnarray*}

Where $n\geq1$, $\alpha=n-2i+j$, $\beta=3i-2j-n$, $\gamma=k+m+r$,
$\theta=q+3j+2s+l-k-r-m-3i$.
\end{landscape}


\begin{thebibliography}{99}
\setlength{\itemsep}{-1mm}


\bibitem{Adams} P. Adams, M. Mahdian, E.S. Mahmoodian, On the forced matching numbers of bipartite graphs, Discrete Math.
281 (2004) 1--12.
\bibitem{Afshani} P. Afshani, H. Hatami, E.S. Mahmoodian, On the spectrum of the forced matching number of graphs, Australas. J. Combin. 30 (2004) 147--160.
\bibitem{Berge} C. Berge, C. Chen, V. Chv\'{a}tal, C.S. Seow, Combinatorial properties of polyominoes, Combinatorica  1 (1981) 217--224.
\bibitem{Che} Z. Che, Z. Chen, Forcing on perfect matchings-A survey,
MATCH Commun. Math. Comput. Chem. 66 (2011) 93--136.
\bibitem{Che1} Z. Che, Z. Chen, Conjugated circuits and forcing edges, MATCH Commun. Math. Comput. Chem. 69 (2013) 721--731.
\bibitem{Cockayne} E.J. Cockayne, Chessboard domination problems, Discrete Math. 86 (1990) 13--20.
\bibitem{Cyvin} S. J. Cyvin, I. Gutman, Kekul\'{e} structures in benzenoid hydrocarbons (Lecture notes in chemistry 46), Springer Verlag, Berlin, 1988.
\bibitem{Deng1} H. Deng, The anti-forcing number of hexagonal chains, MATCH Commun. Math. Comput. Chem.  58 (2007) 675--682.
\bibitem{Deng2} H. Deng, The anti-forcing number of double hexagonal chains, MATCH Commun. Math. Comput. Chem.  60 (2008) 183--192.
\bibitem{ZhangD2}  K. Deng, H. Zhang, Anti-forcing spectra of perfect matchings of graphs, J. Comb. Optim. 33 (2017) 660--680.
\bibitem{ZhangD3} K. Deng, H. Zhang, Anti-forcing spectrum of any cata-condensed hexagonal system is continuous, Front. Math. China 12 (2017) 19--33.
\bibitem{ZhangD1}   K. Deng, H. Zhang, Extremal anti-forcing numbers of perfect matchings of graphs, Discrete Appl. Math. 224 (2017), 69--79.
\bibitem{Diwan} A.A. Diwan, The minimum forcing number of perfect matchings
in the hypercube, Discrete Math. 342 (2019) 1060--1062.
\bibitem{Harary} F. Harary, D. Klein, T. \v{Z}ivkovi\'{c}, Graphical properties of polyhexes: perfect matching vector and forcing, J. Math. Chem. 6 (1991) 295--306.
\bibitem{Hwang} H.K. Hwang, H. Lei, Y. Yeh, H. Zhang, Distribution of forcing and anti-forcing numbers of random perfect matchings on hexagonal chains and crowns (preprint, 2015). http://140.109.74.92/hk/?p=873
\bibitem{Jiang} X. Jiang, H. Zhang, On forcing matching number of boron-nitrogen fullerene graphs, Discrete Appl. Math.
 159 (2011) 1581--1593.
\bibitem{Jiang1} X. Jiang, H. Zhang, The maximum forcing number of cylindrical grid, toroidal 4-8 lattice and Klein bottle 4-8 lattice, J. Math. Chem. 54 (2016) 18--32.
\bibitem{Kasteleyn} P.W. Kasteleyn, The statistics of dimers on a lattice \uppercase\expandafter{\romannumeral1}:
    the number of dimer arrangements on a quadratic lattice, Phys. 27 (1961), 1209--1225.
\bibitem{Klein} D. Klein, M. Randi\'{c}, Innate degree of freedom of a graph, J. Comput. Chem. 8 (1987) 516--521.
\bibitem{Kleinerman} S. Kleinerman, Bounds on the forcing numbers of bipartite graphs, Discrete Math. 306 (2006) 66--73.
\bibitem{Lam} F. Lam, L. Pachter, Forcing number for stop signs, Theor. Comput. Sci. 303 (2003) 409--416.
\bibitem{Zh2} H. Lei, Y. Yeh, H. Zhang, Anti-forcing numbers of perfect matchings of graphs, Discrete Appl. Math. 202 (2016) 95--105.
\bibitem{Li2} X. Li, Hexagonal systems with forcing single edges,
Discrete Appl. Math. 72 (1997) 295--301.
\bibitem{LP}  L. Lov\'{a}sz, M. Plummer, Matching Theory, Annals of Discrete Mathematics, Vol 29.
Amsterdam: North-Holland, 1986.
\bibitem{Younger} C.L. Lucchesi, D.H. Younger, A minimax theorem for directed graphs, J. Lond. Math. Soc. 17 (1978) 369--374.
\bibitem{Motoyama} A. Motoyama, H. Hosoya, King and domino polyominals for polyomino graphs, J. Math. Phys. 18 (1997) 1485--1490.

\bibitem{Pachter} L. Pachter, P. Kim, Forcing matchings on square grids, Discrete Math. 190 (1998) 287--294.
\bibitem{Randic} M. Randi\'{c}, D. Klein,
in \emph{Mathematical and Computational Concepts in Chemistry}, ed. by N. Trinajsti\'{c} (Wiley, New York, 1985), pp. 274--282.
\bibitem{c70} M. Randi\'{c}, D. Vuki\v{c}evi\'{c}, Kekul\'{e} structures of fullerene $C_{70}$, Croat. Chem. Acta 79 (2006) 471--481.
\bibitem{Matthew} M.E. Riddle, The minimum forcing number for the torus and hypercube, Discrete Math. 245 (2002) 283--292.
\bibitem{Shi} L. Shi, H. Zhang,   Forcing and anti-forcing numbers of (3,6)-fullerenes, MATCH Commun. Math. Comput. Chem.  76 (2016) 597--614.
\bibitem{Shi1} L. Shi, H. Wang, H. Zhang, On the maximum forcing and anti-forcing numbers of (4,6)-fullerenes, Discrete Appl. Math. 233 (2017) 187--194.
\bibitem{Shi2}  L. Shi, H. Zhang, Tight upper bound on the maximum anti-forcing numbers of graphs, Discrete Math. Theor. Comput. Sci. 19(3) (2017) \#9.
\bibitem{c72} D. Vuki\v{c}evi\'{c}, I. Gutman, M. Randi\'{c}, On instability of fullerene $C_{72}$, Croat. Chem. Acta 79 (2006) 429--436.
\bibitem{c60} D. Vuki\v{c}evi\'{c}, M. Randi\'{c}, On Kekukl\'{e} structures of buckminsterfullerene, Chem. Phys. Lett. 401 (2005) 446--450.
\bibitem{VT1} D. Vuki\v{c}evi\'{c}, N. Trinajsti\'{e}, On the anti-forcing number of benzenoids, J. Math. Chem. 42 (2007) 575--583.
\bibitem{VT2} D. Vuki\v{c}evi\'{c}, N. Trinajsti\'{e}, On the anti-Kekul\'{e} number and anti-forcing number of cata-condensed bezenoids, J. Math. Chem. 43 (2008) 719--726.
\bibitem{Wang} H. Wang, D. Ye, H. Zhang, The forcing number of toroidal polyhexes, J. Math. Chem. 43 (2008) 457--475.
\bibitem{ZhangF} L. Xu, H. Bian, F. Zhang, Maximum forcing number of hexagonal systems, MATCH Commun. Math. Comput. Chem.  70 (2013) 493--500.
\bibitem{Yang}  Q. Yang, H. Zhang, Y. Lin, On the anti-forcing number of fullerene graphs, MATCH Commun. Math. Comput. Chem. 74 (2015) 681--700.
\bibitem{Li} F. Zhang, X. Li, Hexagonal systems with forcing edges, Discrete Math. 140 (1995) 253--263.
\bibitem{ZhangD} H. Zhang, K. Deng, Forcing spectrum of a hexagonal system with a forcing edge, MATCH Commun. Math. Comput. Chem. 73 (2015) 457--471.
\bibitem{Ye} H. Zhang, D. Ye, W.C. Shiu, Forcing matching numbers of fullerene graphs, Discrete Appl. Math. 158 (2010) 573--582.
\bibitem{Zhang0} H. Zhang,  F. Zhang, Perfect matchings of polyomino graphs,
Graphs Combin. 13 (1997) 295--304.
\bibitem{Zhang} H. Zhang,  F. Zhang, Plane elementary bipartite graphs, Discrete Appl. Math. 105 (2000) 291--311.
\bibitem{ZhangZhao}  H. Zhang, S. Zhao, R. Lin, The forcing polynomial of catacondensed hexagonal systems, MATCH
Commun. Math. Comput. Chem. 73 (2015) 473--490.
\bibitem{Zhou2} H. Zhang, X. Zhou, A maximum resonant set of polyomino graphs, Discuss. Math. Graph Theory 36 (2016) 323--337.
\bibitem{ZhangQQ} Q. Zhang, H. Bian, E. Vumar, On the anti-kekul\'{e} and  anti-forcing number of cata-condensed phenylenes,
MATCH Commun. Math. Comput. Chem.  65 (2011) 799--806.
\bibitem{Zhaos} S. Zhao, H. Zhang, Forcing polynomials of benzenoid parallelogram and its related benzenoids, Appl. Math. Comput. 284 (2016) 209--218.
\bibitem{Zhaos1} S. Zhao, H. Zhang, Anti-forcing polynomials for benzenoid systems with forcing edges, Discrete Appl. Math. 250 (2018) 342--356.
\bibitem{Zhaos2} S. Zhao, H. Zhang,  Forcing and anti-forcing polynomials of perfect matchings for some rectangle grids, J. Math. Chem. 57 (2019) 202--225.
\bibitem{Zhou} X. Zhou, H. Zhang, Clar sets and maximum forcing numbers of hexagonal systems, MATCH Commun. Math. Comput. Chem. 74 (2015) 161--174.
\bibitem{Zhou1} X. Zhou, H. Zhang, A minimax result for perfect matchings of a polyomino graph, Discrete Appl. Math. 206 (2016) 165--171.
\end{thebibliography}
\end{document}